\newtheorem{thmm}{Theorem}
\newtheorem{lem}[thmm]{Lemma}
\newtheorem{rem}[thmm]{Remark}
\newtheorem{prop}[thmm]{Proposition}
\newtheorem{cor}[thmm]{Corollary}
\theoremstyle{definition}
\newtheorem{res}{Result}
\def \g {\gamma}
\def \d {\delta}
\def \e {\varepsilon}
\def \eps {\varepsilon}
\def \O {\Omega}
\def \CC {\mathbb{C}}
\def \E {\mathbb{E}}
\def \P {\mathbb{P}} 
\def \R {\mathbb{R}} 
\def \Z {\mathbb{Z}}
\def \C {\mathcal{C}} 
\def \G {\mathcal{G}}
\def \Ha {\mathcal{H}}
\def \St {\mathcal{S}}
\def \T {\mathcal{T}}
\def \Es {\mathrm{Es}}
\def \SLE {\mathrm{SLE}}
\def \SS {\mathcal {OS}}
\def \cst {\iota}
\title{Near-critical spanning forests and renormalization}
\author{St\'ephane Benoist}
\address{Department of Mathematics,
Columbia University,
2990 Broadway,
New York, NY 10027, USA}
\email{sbenoist@math.columbia.edu}
\author {Laure Dumaz}
\address{
Statistical Laboratory, University of Cambridge,
Wilberforce Road,
Cambridge CB3 0WB, UK}
\email{L.Dumaz@statslab.cam.ac.uk }
\author {Wendelin Werner}
\address {D-Math, ETH Z\"urich, R\"amistr. 101, 8092 Z\"urich, Switzerland}
\email {wendelin.werner@math.ethz.ch}
\begin{document}

\begin {abstract}
We study random two-dimensional spanning forests in the plane that can be viewed both in the discrete case and in their appropriately taken scaling limits as a uniformly chosen spanning tree with some Poissonian deletion of edges or points. We show how to relate these scaling limits to a stationary distribution of a natural coalescent-type Markov process on a state-space of abstract graphs with real-valued edge-weights. This Markov process can be interpreted as a renormalization flow. 

This provides a model for which one can rigorously implement the formalism proposed by the third author in order to relate the law of the scaling limit of a critical model to a stationary distribution of such a renormalization/Markov process: When starting from any two-dimensional lattice with constant edge-weights, the Markov process 
does indeed converge in law to this stationary distribution that corresponds to a scaling limit of UST with Poissonian deletions. 

The results of this paper heavily build on the convergence in distribution of branches of the UST to SLE$_2$ (a result by Lawler, Schramm and Werner) as well as on the convergence of the suitably renormalized length of the loop-erased random walk to the ``natural parametrization'' of the SLE$_2$ (a recent result by Lawler and Viklund).
\end {abstract}

\maketitle

\section {Introduction}
Phase transitions and critical phenomena are now considered from the physics point of view to be a fairly settled issue, thanks to numerous important works in
these last 70 years.
On the mathematical side, there now exist a couple of important discrete two-dimensional models for which one can really prove that the discrete critical system  converges to a continuous scaling limit (that turns out to be conformally invariant), but many fundamental questions remain unsolved. This includes the existence and the description of scaling limits for three-dimensional models, and the understanding of the universality question (for instance: How can one prove that for a given model at criticality -- say critical percolation -- and a given dimension, it does behave in the same way in the scaling limit, independently of the chosen lattice?).

One of the arguments used successfully by physicists to tackle this universality question is that of the renormalization group. The underlying idea is that in the scaling limit, a critical system should give rise to a scale-invariant random continuous model. Then, if one manages to give a rigorous meaning to the change-of-scale operation as acting on these random configurations in the continuum (and each type of discrete model should then correspond to a different renormalization operation),
it has been argued that in fact, for every given spatial dimension $d$ and any critical model that gives rise to a random scaling limit, there should exist a unique non-trivial probability measure on continuous configurations that is invariant under this renormalization operation. Then, if one starts from the discrete model on any given
$d$-dimensional lattice and iterates this renormalization map (which corresponds to zooming out), one should converge to this unique critical continuous model.  

While this line of thought has sparked a number of important works on the field-theoretical description of these scaling limits, including mathematically rigorous ones, one 
major issue that mathematicians have not been able to circumvent is  to  make rigorous sense of the renormalization operation as
acting on some concrete geometrically-flavored state-space. 

The present paper's contribution is to implement in one very special case the renormalization formalism that has been described and proposed in \cite {W} for all critical FK-percolation models, and in any dimension. Recall that the critical FK-percolation models form a family of models indexed by $q \ge 0$ closely related to the Ising and Potts models, and that the cases $q=0$ and $q=1$ correspond respectively to the uniform spanning tree/forest model and to Bernoulli percolation. 
The general idea proposed in \cite {W} is to consider certain Markov processes living on the state-space of discrete weighted graphs. For each value of $q$, one can define 
one such Markov  process in rather simple terms -- it is a jump process, where jumps correspond to merging of neighboring sites (and the rate at which this happens depends on the graph and on $q$). Then, one can relate the existence of the scaling limit and universality question 
to some conjectural properties of these Markov chains, and more precisely to 
the existence of probability measures on such weighted graphs that are invariant under (a variant of) this Markov chain. 
We are not going to repeat here the description of this framework in the general case and we refer the reader to \cite {W} for details. As mentioned
in \cite {W}, in the special case of two-dimensional Bernoulli percolation, the detailed results of Garban, Pete and Schramm \cite{GPS1,GPS2,GPS3} on the phase transition and the near-critical behavior (which in turn partially build on Smirnov's conformal invariance results and/or the SLE$_6$ description of the critical interfaces) does 
provide a construction of such a non-trivial invariant probability measure for $d=2$ and the Markov process corresponding to Bernoulli percolation. 

In the present paper, we will focus of the special critical FK model for which one arguably has currently the most mathematical control on, 
namely the two-dimensional uniform spanning tree (UST) corresponding to $q=0$. Indeed, in this case, all the following features are known:  
 Existence of the scaling limit, its description (via SLE curves), universality (i.e., USTs on different lattices have the same scaling limit) \cite {LSW_LERW}, 
 and some very precise asymptotic estimates on probabilities. In particular, this is the only model for which the 
 appropriately renormalized lengths of interfaces in the discrete models are known to converge to the natural parametrization of its SLE scaling limit (see \cite {LV} 
 and the references therein -- this is in particular closely related to Kenyon's results \cite {Kenyon}).  We will make an extensive use of all these features. 
 
Let us first quickly describe the corresponding Markov process when started from a given infinite graph (one can 
for instance choose the starting point of the Markov process to be a two-dimensional lattice, but the set-up can be adapted to any dimension). 
First, imagine that one samples a UST on this graph, and then discovers its edges in a uniformly chosen random order (for instance, each edge that is eventually in the UST appears independently at some random exponential time). 
In this way, at a given time $t$, one has already some partial information about the UST. More precisely, the configuration is that of a forest $F(t)$ (a collection of trees that will all eventually be part of the infinite UST at time $t = \infty$). We can then consider for each time $t$, the graph $S(t)$ obtained by contracting all
edges that are present at time $t$ and that we will refer to as the structure graph $S( F(t))$ of 
$F(t)$.
More precisely, to each forest $F$ in our original lattice,  $S(F)$ is a graph with integer edge-weights defined as follows: 
\begin {itemize}
 \item Clusters $c$ of $F$ correspond in a one-to-one way to sites $s(c)$ of $S(F)$.
 \item When two clusters $c$ and $c'$ are not adjacent, there is no edge joining $s(c)$ and $s(c')$ in $S(F)$.
 \item When two clusters $c$ and $c'$ are adjacent, then $s=s(c)$ and $s'=s(c')$ are joined in $S(F)$ by an edge $(s,s')$ with weight $w(s,s')$ equal to the number of edges of the original lattice that connect $c$ to $c'$.
 \end {itemize}

\begin{figure}[ht!]
\centering
\includegraphics[width=14cm]{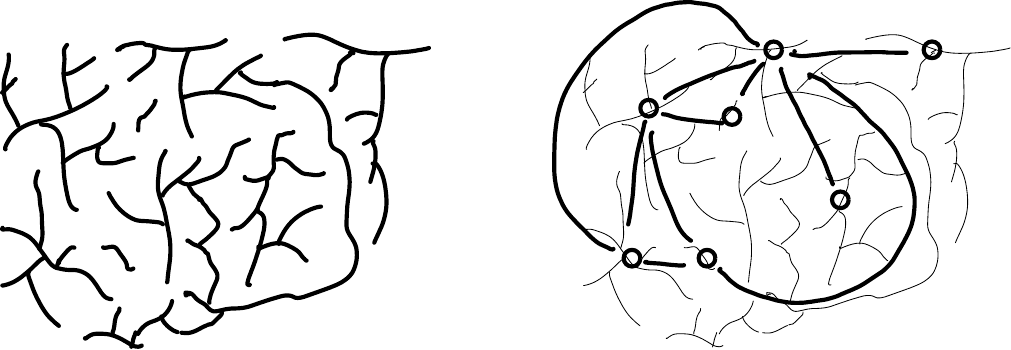}
\caption{From the forest to the structure graph (sketch)}
\end{figure}

It is easy to check that the process $(F(t), t \ge 0)$ is a Markov process (loosely speaking, the conditional distribution of the UST given $F(t)$ is just ``a UST conditioned to contain $F(t)$'').  A first key observation is that (when defined on an appropriate state of infinite weighted graphs), the process $(S(t):= S(F(t)), t \ge 0)$ is Markovian as well (this is just because the only information about $F(t)$ that is used to describe the future evolution of the structure graph is encapsulated in $S(t)$): 
The time-evolution for $S(t)$ then corresponds to the merging of neighboring sites $s'$ and $s$ (i.e., collapsing of the edge $(s,s')$ between them) that occurs at a certain rate depending on all the weights $w$ (i.e., it depends on the entire graph $S(t)$). When one collapses $s$ and $s'$ into a site $ss'$, then the new edge-weights 
$\tilde w$ are simply given by $\tilde w (ss' , s'') = w(s, s'') + w (s', s'' )$, while the edge-weights corresponding to edges that are not adjacent to $ss'$ are left untouched. 

A second observation is that if one multiplies all edge numbers by the same constant factor, then the only effect on the evolution of the Markov process is that it gets speeded up by a constant factor as well.
This leads naturally to consider the generalization of the Markov process on a space weighted graphs $S$, where the weights $c$ are non-negative reals (instead 
of integers). 
It is then also natural, for each positive $\lambda$, to consider the same Markov process but where all the weights decrease continuously at constant rate $\lambda$, in order to balance the general increase 
of edge-weights due to the constant contraction of edges.

We can now describe in loose words the content of the main results (Theorems \ref {main1} and \ref {main2}) of the present paper: 
\begin {enumerate} 
\item 
We will first see that the definition of our Markov process $(S(t), t \ge 0)$ on discrete structure graphs can be extended to a
space of graphs with unbounded degrees. Here, a site $s$ of $S$ can have infinitely many neighbors, but the sum of all weights $w(s, s')$ over all the neighbors $s'$ has to be finite.  
\item 
Using the continuous SLE-based description of the scaling limit of the two-dimensional UST, we will exhibit a non-trivial probability measure that is invariant for this Markov process for some positive value of $\lambda$.
This will build on the description of the scaling limit of UST via SLE, and on the convergence of the renormalized length of these branches to their continuous counterparts.
\item
Hence in the case of the two-dimensional UST, this implies that the conjectures for the formalism introduced in \cite {W} hold:   
when one starts from any two-dimensional lattice and runs the Markov process (for this value of $\lambda$), 
it converges in distribution to this particular fixed point of this Markov process
(up to multiplication of the weights of all edges by some lattice-dependent constant). 
\end {enumerate}

In order to describe the invariant probability measure under the Markov process and also to get a feeling about the strategy of the proofs, it is useful to look 
at the dynamics ``backwards'': one first samples the whole UST, and then for each time $t$, one creates the  
{\em uniformly cut uniform spanning tree} by erasing some of its edges uniformly at random, in a Poissonian way where each edge is 
removed with a probability $e^{-t}$ (we do this in a consistent way, so that an edge erased at time $t$ is also erased at all times $t' \le t$). 
So, at $t=0$, one has isolated points, and at $t= \infty$, one has the entire UST. 
Using the known convergence of the UST (in the scaling limit) to the continuous UST described in terms of SLE$_2$ and the convergence of the lengths of branches, one can argue that when $t$ is very large, the picture  (in
the appropriate scaling) will be very close to that of the continuous UST (constructed via Schramm-Loewner-Evolutions of parameter $2$) where the branches of the tree are cut in a Poissonian way with respect to their natural length. An invariant probability measure under the dynamics will be the law of this {\em uniformly cut continuous UST}, or more precisely the law of the weighted structure graph obtained by 
considering as sites $s$ the connected components of this uniformly cut continuous UST, and as weight $w(s,s')$ of the edge between two neighboring components the ``natural length'' of the interface between $s$ and $s'$. 
The following feature (that also appears in the work on near-critical percolation) of the stationary measure is worth stressing, as it illustrates the type 
of problems that one is facing. 
Consider a uniformly cut continuous UST, and two of its adjacent trees. Then, the appropriately defined $5/4$-dimensional length of the intersection between the 
boundaries of these two tree is comparable to (i.e., of the same order of magnitude as) the outer boundary of these trees, but this interface is in fact totally disconnected. There will be a dense collection of other small trees that are squeezed in between the two.

\medbreak 

\begin{figure}[!h]
\centering
\includegraphics[width=10cm]{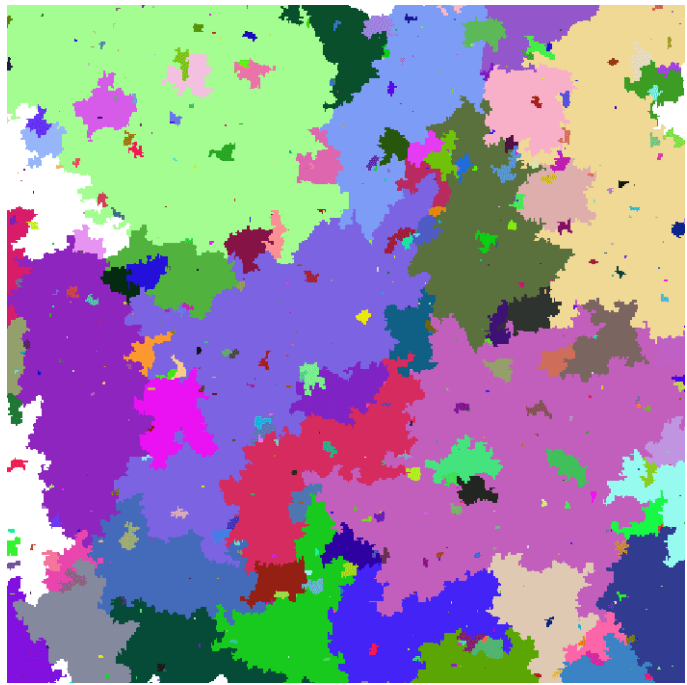}
\caption{Simulation of part of the uniformly cut UST}
\end{figure}

Here is a list of some of the main technical features and tools that we shall use: 
\begin {itemize}
 \item We will use the framework introduced by Schramm \cite {S0} in order to describe the set in which our discrete geometric objects (the UST, the uniformly cut USTs) and their scaling limits live in:  one encodes the limit of these forests to be the (countable) family of its ``continuous backbone branches'' (corresponding to the limit of the macroscopic branches of the cut UST).
 \item On this set of continuous forests, we will then define the dynamics. While the discrete dynamics are clearly Markovian, it is not obvious at all that the continuous process is 
 Markovian as well (as some information may have disappeared in the scaling limit). This is the same key-problem as in the case of near-critical percolation studied in \cite {GPS1,GPS2,GPS3}. In order to prove this, we need a careful analysis of the discrete to continuous limiting procedure, and we shall use some stochastic comparisons between the evolutions of various graphs under our dynamics.
 \item 
We rely on the convergence of the branches of the UST (i.e., loop-erased random walks converge to SLE$_2$, as proved in Lawler, Schramm, Werner \cite {LSW_LERW}) but one also needs to control the clocks of our dynamics, i.e., the number of edges on these branches, as they control the time-evolution.
For this, we will in fact use the convergence of the loop-erased random walk to SLE$_2$ in this ``natural parametrization'' for the uniform topology
(this result will be recalled in the next section), due to Johansson-Viklund and Lawler \cite {LV} (and that builds on earlier work of these authors with Benes \cite {BVL}). 
\end {itemize}
The paper will be structured as follows: 
\begin {itemize}
 \item In Section \ref{sec:USTandUSTlimits}, we first recall some features of USTs, briefly define Schramm's framework for scaling limits, and investigate the scaling limit of the cutting process of USTs in bounded domains.
 \item In Section \ref{sec:forw}, we study the time-reversal of the cutting dynamics seen on ``structure graphs'', and state our first main result, i.e., that this time-reversal is Markovian. We then explain why the whole-plane version of these results can be interpreted in terms of a renormalization flow fixed point. 
 \item In Section \ref{lastsection}, we prove the technical lemmas on discrete UST events that are needed in the previous proofs.
 \item In the appendix, we use the results of \cite {LV} in order to derive the actual facts about convergence of length to natural parametrization in the settings that we need.  
\end {itemize}

Let us conclude this introduction with a few words about ``near-critical'' models and stress that the uniformly cut USTs that we 
are working with here are not part of the FK-percolation family (this feature also appears in the general setup described in \cite {W}).
Recall that the terminology  ``criticality'' usually refers to the fact that one considers a one-parameter family of lattice models, and that there is a phase-transition at this special value of the parameter that one chooses. However, there are often more than one parameters that one can play with in the discrete model, and therefore, in the scaling limit one obtains many possible directions in which one can  perturb the continuous critical model as well.

On a finite graph, it is well known that the law $P^0$ of the uniform spanning tree can be viewed as the limit when $p \to 0+$ and $q = o(p)$ of the random cluster (or FK)-measure $P_{p,q}$ (indeed, the fact that $q \to 0$ faster than $p$ ensures that most of the mass of $P_{p,q}$ sits on the configurations with just one connected component, and the fact that $p \to 0$ ensures that the system uses the minimal amount of edges).
When $q \to 0$ and $p >0$ remains fixed, the measure becomes simply $P_{p, 0+}$, which is a percolation of density parameter $p$, conditioned to have exactly one connected component. On the other hand, when $q \to 0$ and $p$ is of the same order as $q$, then the limit will be supported on forests (i.e., collections of trees). More precisely, when $p=q$, the $q \to 0$ limit is the uniform measure on forests and when $p= \alpha q$, the limit measure is the percolation measure of parameter $\alpha/(1+\alpha)$ conditioned on the non-existence of open circuits. This leads (via finite-site scaling, tuning $\alpha (N)$ appropriately, and letting $q \to 0$ and $N \to \infty$) to a continuous model, which corresponds to a model of a near-critical continuous uniform spanning forest, which is a perturbation of the continuous UST. 
However, the object obtained via such a construction will differ from the one that we study in the present paper. One way to see this is to notice that a discrete measure $P_{\alpha 0+,0+}$ assigns the same probability to different forests that have the same number of trees, whereas in the uniformly cut UST, the weight of a configuration depends in a non-trivial way on the lengths of the boundaries between the trees in the forest (as they indicate how many possible ways there were to construct the forest by cutting the tree at random).

\section{UST and UST limits}\label{sec:USTandUSTlimits}

\subsection {General UST Background}
\label {USTbackground}
Let us very quickly browse through some of the standard UST features and definitions that we will use. 

The uniform spanning tree (UST) $\T(G)$ on a finite connected graph $G$ is a random subgraph of $G$ that has been uniformly chosen among those connected subgraphs that contain all vertices of $G$ and are cycle-free.
If $G$ is an infinite graph, one can define a similar object $\T(G)$, the free uniform spanning forest or free USF (see, e.g., \cite{BLPS}), as the weak limit of USTs on $G_n$, where $G_n$ is any increasing exhaustion of $G$ by finite connected subgraphs. Depending on the infinite graph, this uniform spanning forest can be almost surely a tree, or not. 
In $\Z^d$ for $d \le 4$, the free uniform spanning forest is actually a.s. a tree (and called a UST as well). 

The notion of UST can be extended to weighted graphs.
Let $G = (V,E)$ be a finite graph and let $c:E \to \R_+$ denote its weights. Then the weighted spanning tree is the probability measure on the set of all spanning trees such that the probability to choose a tree $T$ is proportional to $\prod_{e \in T} c(e)$. 
If $G$ is an infinite weighted graph, one can define the \emph{weighted free spanning forest}, a probability measure on the subgraphs of $G$, as the weak limit of the weighted spanning tree on $G_n$ (where $G_n$ is any connected exhaustion of the weighted graph $G$). This definition in fact works even if the graph is not 
locally finite (i.e., sites are allowed to have infinitely many neighbors, and the sum of the incoming weights is even allowed to be infinite). 
Depending on $G$, this weighted free spanning forest can almost surely be a tree or not.

\medbreak

Suppose now that $T$ is a spanning tree of the graph $G$ and that $V$ is a finite set of vertices $G$. We  will 
denote by $T_V$  the minimal connected subgraph of $T$ containing $V$. 
If $F$ is a forest (a disjoint union of trees) of $G$, then we define $F_V$ as the union of the subtrees generated by $V$ on all the connected components of $F$.

Wilson \cite{Wi} provided an algorithm to sample from the UST measure on a finite graph $G$, by iteratively generating branches as loop-erased random walks on the graph $G$ as follows. 
Enumerate the vertices of the graph $G$ as $x_0,x_1,\ldots, x_N$.  
Start with a single point $T_0 = x_0$. For each $n \in \{1, \ldots, N \}$, in order to build $T_n$, run a simple random walk $X_n$ on $G$ started from $x_n$ and stopped upon hitting $T_{n-1}$. Consider the (chronological) loop-erasure $\gamma_n$ of $X_n$, and let $T_n=T_{n-1}\cup \gamma_n$. Then, the final tree $T_N$ has the law of a UST on $G$.

It is well-known that Wilson's algorithm can be extended to (locally finite) infinite graphs such as $\Z^d$, as well as to weighted graphs (one just needs to replace the simple random walk by a random walk with non-constant conductances). This generates a random infinite forest, known as the wired spanning forest. In $\Z^d$ or in graphs that are obtained from $\Z^d$ by contracting or erasing some edges, the free USF and the wired USF coincide, see \cite {BLPS}.

\medbreak

At some points in the paper, we will use coupling results between USTs in various domains (this type of result is in fact instrumental in deriving the existence and properties of some of the objects mentioned above, such as the free USF). 

Let us first recall (\cite[Corollary 4.3-(a)]{BLPS}) that if one considers two connected graphs $G$ and $G'$ with the same vertex sets, but where the set of edges of $G$ contains the set $E'$ of edges of $G'$, then it is possible to couple the UST in $G$ with the UST in $G'$ in such a way that $\T (G) \cap E' \subset \T (G')$ almost surely. 

Suppose now that $\mathcal{I}$ is a collection of edges of a finite graph $G$, and let $\mathcal{I}_1 \subset \mathcal{I}_2$ be two subsets of $\mathcal{I}$ that can be both completed into spanning trees of $G$ by adding edges that are not in $\mathcal{I}$. 
Let $\T_1$ (resp. $\T_2$), be the uniform spanning tree $\T(G)$ on $G$, conditioned on $\T \cap \mathcal{I} = \mathcal{I}_1$ (resp. on $\T \cap \mathcal{I} = \mathcal{I}_2$).
It is then possible to couple $\T_1$ and $\T_2$ in such a way that $\T_2\cap\mathcal{I}^c \subset \T_1\cap\mathcal{I}^c$ almost surely.

Indeed, one can first condition 
 both USTs  to contain all edges in ${\mathcal I}_1$ and no edge in ${\mathcal I} \setminus {\mathcal I}_2$ (and this corresponds to just removing the edges 
 of ${\mathcal I} \setminus {\mathcal I}_2$ from the graph and to collapse all edges of ${\mathcal I}_1$). Hence, one needs only to treat the case where ${\mathcal I}_1$ is empty and 
 ${\mathcal I}_2={\mathcal I}$, which can be deduced from the previously mentioned result by conditioning on ${\mathcal I}\cap \T_1$.

Again, these results have fairly obvious generalizations to the case of weighted graphs (we safely leave their proofs to the readers).

\subsection {Schramm's framework}\label{subsec:Schramm}

In order to describe the scaling limits of our forests, we will use the framework introduced by Oded Schramm \cite{S0}; let us briefly review its basic features (we refer to Section 10 of \cite {S0} for details). 

For a compact topological space $X$, let us call $\Ha(X)$ the set of compact subsets of $X$ equipped with the Hausdorff topology; recall that 
  $\Ha (X)$ is itself a compact space. 

We call \emph{Schramm space} $\SS$ in the Riemann sphere $\hat \CC$ the set $\Ha(\hat \CC \times \hat \CC \times \Ha({\hat \CC}))$ equipped with its Hausdorff topology. 
Similarly, when $\Omega$ is a simply-connected bounded domain of the plane with $\C^1$ boundary, we define $\SS_\Omega=\Ha(\overline{\O}\times \overline{\O} \times \Ha(\overline{\O})) \subset \SS$.
The distance of this Hausdorff topology on $\SS$ or $\SS_\Omega$ is denoted by $d_{\Ha}$. 
Note that the notion of convergence is the same for the spherical or Euclidean distance in $\O$ when $\O$ is bounded, and so we can work with either in this case.
All these spaces are compact, so that any sequence of probability measures on those spaces possesses subsequential limits.

In the framework of uniform spanning trees and their scaling limits, one considers very special elements in $\SS$ (in particular elements $\G$ in $\SS$  with the property that if $(a,b,K) \in \G$,
then $K$ is a continuous path from $a$ to $b$, and $(b,a,K) \in \G$ -- furthermore, if $x$ and $y$ lie on this continuous path $K$ and $\omega$ denotes the part of $K$ from $x$ to $y$, then $(x,y,\omega) \in \G$ as well). 
A discrete graph embedded continuously in the plane (in such a way that the edges correspond to actual paths in the plane) can be encoded by its path ensemble, i.e., by a point $\G$ in the Schramm space such that $\G = \bigcup \{ (a,b, \g)\} $ where $a,b$ run over all pairs of points in the continuous embedding of 
the graph (so that $a$ and $b$ could lie on its ``edges'') and $\g$ runs over all simple (continuous) paths joining $a$ to $b$ in this embedded graph).
In particular, when $a$ or $b$ does not belong to the embedded graph or if $a$ and $b$ are in different connected components of this embedding, then there is no triplet of the form $(a,b,\g)$ in the corresponding path ensemble.

The UST on a discrete graph embedded in the plane can then be viewed as a probability measure on $\SS$, and by compactness, it has subsequential limits when one lets the mesh of the lattice go to zero. As we shall now recall, this subsequential limit is in fact a limit.

From now on and until further notice, $\Omega$ will denote either the entire plane or a simply-connected bounded domain of the plane with $\C^1$ boundary. We set $\Omega^\delta$ a simply connected discretization of it at mesh size $\delta$ of the same type as in \cite{LV} (``union of squares'' domain, paragraph 2.1):
we first consider the subgraph $A$ of $\delta \Z^2$ whose edges are exactly the edges of $\delta \Z^2$ that are included in $\Omega$. We then fix $\xi \in \Omega$ and let $\Omega^{\d}$ be the connected component of  $A$ that surround $\xi$. The boundary $\partial\Omega^\delta$ of $\Omega^{\d}$ will be the set of vertices of $\Omega^{\d}$ that have a neighbor that does not belong to $\Omega^{\d}$. As an illustration of discretizations, when $\Omega$ is the entire plane, we just take $\Omega^\d$ to be $\delta \Z^2$.

Let us consider the UST $\T(\Omega^\d)$ and its path ensemble denoted by $\G^\d(0) \in \SS_\Omega$
(we will soon run a dynamics starting from the UST at time $0$).
The branches of uniform spanning trees are loop-erased random walks (LERW), which have been shown by Lawler, Schramm and Werner to converge to SLE$_2$ paths in the scaling limit (this convergence holds for paths parametrized by ``Loewner capacity'', which yields in particular convergence for 
paths up to monotone reparametrization), \cite[Theorem 1.1]{LSW_LERW}. 

As explained in \cite {S0}, the convergence of LERW to SLE$_2$, together with estimates building on Wilson's algorithm, yield the convergence of the UST to its continuous limit in the Schramm space (we will refer to results and statements that are proved in other papers or preprints as ``results'' in order to make the distinction with 
the lemmas and propositions that are proved in the present paper): 

\begin {res}\label{res:CVtime0} (\cite[Corollary 1.2]{LSW_LERW} and \cite [Theorem 11.3]{S0}). When $\d \to 0$,  $\G^\d(0)$ converges in distribution (in $\SS_\Omega$) to a continuous random element $\G(0)$. 
\end {res}

There exists other possible descriptions of the scaling limits of USTs (for instance via the contour process of the tree, which converges to SLE$_8$, or via a consistent collection of subtrees \cite{ABNW}) but we will not use them here.
We will just call the random object $\G(0)$ the continuous UST in $\Omega$.
Theorem~1.5 of \cite{S0} lists various properties of $\G(0)$ (that for instance explain why one can call it a random tree). 
In particular, for every given $x,y \in \overline{\O}$, there exists almost surely a unique $\omega \in \Ha(\overline{\O})$ such that $(x,y,\omega) \in \G(0)$.
Moreover, if $x \neq y$, then $\omega$ is almost surely a simple path, and if $x =y$, then $\omega$ is almost surely a single point. 
There are some random exceptional points, for which this uniqueness statement does not hold (these points are nonetheless well-understood, in terms of the dual tree). However, existence almost surely never fails, i.e., almost surely, for any $x$ and $y$, there 
exists at least one  $\omega \in \Ha(\overline{\O})$ such that $(x,y,\omega) \in \G(0)$.

\medbreak

There are several ways to approximate the continuous UST in the Schramm space by somewhat simpler (continuous) objects. It is for instance natural to consider a dense deterministic sequence of points 
$z_1, z_2, \ldots$ in $\O$ and to define for each $n$ the finite subtree $\T_{z_1, \ldots, z_n}$ consisting of just the branches that join $z_1, \ldots, z_n$ (we have seen that they are almost surely unique). When dealing with such a tree in the Schramm space, we will implicitly consider the collection of all its subarcs (so in particular, for all $x$ and $y$ that lie on a branch of the tree, if $\omega$ denotes the branch of the tree from $x$ to $y$, then 
$(x,y,\omega)$ does belong to this typically uncountable collection). 
One key property derived in \cite {S0} is that when $n\to \infty$, this finite tree almost surely converges to the continuous UST in $\SS$. This statement holds in a strong way: the finite trees approximate well the entire tree in the sense that for all $\e >0$, with probability that goes to $1$ as $n \to \infty$, the whole tree is formed of the finite tree $\T_{z_1, \ldots, z_{n_\e}}$ plus some paths of diameter smaller than $\e$ with respect to the spherical metric in the plane. Let us state this more precisely.
 
\begin{figure}[ht!]

\includegraphics[width=10cm]{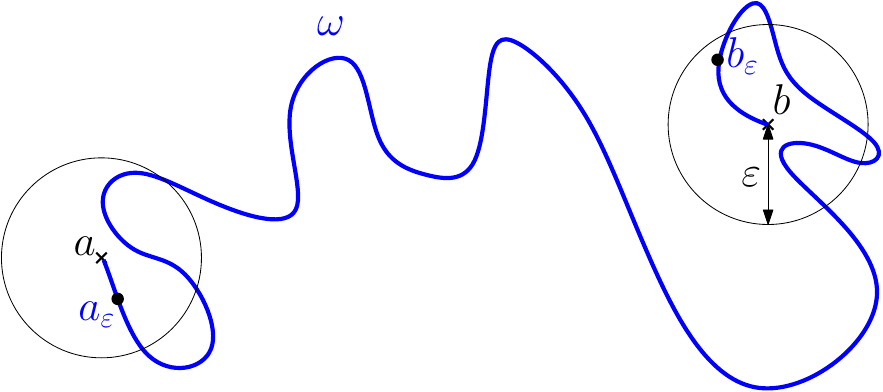}

\caption{An approximation of the branch $\omega$ between $a$ and $b$ by the branch $\omega'$ from $a_\e$ to $b_\e$ in $\G$.}\label{fig:ba}
\end{figure}

In what follows, when $\Omega$ is the entire plane, we will use the spherical distance (when $\Omega$ is bounded, one can safely use the Euclidean distance). 
We say that a subset $\G_\e$ of some $\G \in \SS_\Omega$ is a strong $\e$-approximation of $\G$ if for any point $(a,b,\omega) \in \G$, 
we can find $a_\e$, $b_\e$, $\omega_a$, $\omega_b$ and $\omega'$ such that $d(a,a_\e) \leq \eps$, $d(b,b_\e) \leq \eps$, $(a_\e,b_\e,\omega') \in \G_\e$, $\omega_a \subseteq B(a,\e)$, $\omega_b \subseteq B(b,\e)$ and $\omega = \omega_a \cup \omega' \cup \omega_b$ (see Figure \ref{fig:ba}). When $\G$ encodes the branches of a tree, approximations of this kind can be found by somehow removing the part of the branch $(a,b,\gamma_{a,b})$ in an ``$\e$-neighborhood'' of $a$ and $b$:
in \cite {S0}, Schramm defined the $\eps$-trunk as a subtree of the UST where the part of the branches that are $\eps$ close to the leaves are removed. It is then obvious that the $\e$-trunk is a strong approximation of $\G(0)$. Note that in particular the distance between $\G_\e$ and $\G$ is smaller than $\e$. 
The following result is a key step in \cite{S0} towards the proof of Result \ref{res:CVtime0}.

\begin{res}\cite[Theorem 10.2]{S0}\label{th.finiteness.S2}
For any cut-off $\e >0$, we can find a scale $\delta_\e>0$, such that for any mesh size $\delta < \delta_\e$ and for all
set of vertices $(z_1,\ldots,z_n)$ being a $\delta_\e$-net of $\Omega$ (i.e., every point in $\Omega$ is within distance $\delta_\e$ of one of the $z_i$), then the collection of all subarcs of the finite tree $\T_{z_1, \ldots, z_n}(\O^\d)$ generated by $z_1,\ldots,z_n$,  viewed in the Schramm space $\SS_\Omega$, is a strong $\e$-approximation of $\G^\d(0)$ with probability greater than $1-\eps$.
\end{res}

As a consequence (see \cite[Corollary 10.3]{S0}), for all $\e >0$, there exists $n$ such that the subtree $\T_{z_1, \ldots, z_n}$ of the continuous UST $\G(0)$ on $\Omega$ is a strong $\e$-approximation of $\G(0)$, with probability greater than $1-\e$.

\medbreak
\noindent
{\bf Dual trees and boundary conditions.}
It is well-known that for a planar graph (i.e., embedded in the plane so that no two edges cross), one can associate to each spanning tree $T$ on the graph $G$ a dual spanning tree on the dual graph, and that 
if $\T$ is sampled according to the UST measure, then the dual tree is sampled according to the UST measure in the dual graph. 
When $G$ is a portion of the lattice $\Z^2$, then the dual graph is a portion of the lattice $(\Z + 1/2 )^2$, with the boundary vertices identified (this corresponds to wired boundary conditions). 
In the discrete case, one can define $\G^\d(0)^\star$ in the Schramm space as being the dual tree of $\G^\d(0)$ (i.e., the element in the Schramm space corresponding to the dual of the tree $\T(\Omega^\d)$). 
By taking subsequential limits, one can then have convergence in distribution of the couple $ (\G^\d(0), \G^\d(0)^\star)$. It is explained in \cite {S0} that in fact, the limit of $ \G^\d(0)^\star $ is a deterministic function of the limit of $\G^\d (0)$.
We again refer to \cite {S0} for details (in particular about boundary conditions for the USTs) -- and for the fact that  Result \ref {th.finiteness.S2} holds also for USTs with wired boundary conditions.

\begin{rem}
Building on Wilson's algorithm, it is fairly easy to compare USTs with different boundary conditions, and to deduce the convergence (when the mesh size goes to $0$) of the UST in the entire plane from the convergence in bounded domains. For instance, if one considers $n$ points $y_1, \ldots, y_n$ in the plane, and the law of the finite tree $\T_n^\delta$ obtained by sampling the smallest subtree of the UST in $\delta \Z^2$ that contains $n$ points on this grid that are at distance smaller than $\delta$ from $y_1, \ldots, y_n$, then for all $\eps$ and $R$ large enough and $\delta$ small enough,  $\T_n^\delta$ is equal to the corresponding subtree $\widehat\T_n^\delta$ of the wired UST in the domain $\{ z  \ : \ |z| < R \}$, with probability greater than $1-\varepsilon$. However, the law of the tree $\widehat\T_n^\delta$ converges (when we let $\delta \to 0$ first and then $R \to \infty$) to a finite continuous tree $\T_n$ joining $y_1, \ldots, y_n$ (thanks to \cite[Theorem 1.1]{LSW_LERW} which holds for any simply connected domain). It follows that the tree $\T_n^\delta$ converges in law as $\delta \to 0$ to $\T_n$. We will use this approach later in Appendix \ref{sec:app} to get the strong convergence of UST for various boundary conditions.
\end{rem}

\subsection{UST and lengths of branches}

We now want to extend the previous convergence in distribution of the discrete UST to the continuous one, when one adds also the information about the lengths of the
branches of tree.
It is known since Rick Kenyon's paper \cite {Kenyon} that the mean number of steps of a LERW grows like $\delta^{-5/4 + o(1)}$ as the mesh-size $\delta$ goes to $0$
(see also \cite {Masson,BVL} for closely related sharper estimates and results). Note that the actual length of the LERW with mesh-size $\delta$ will grow 
like $\delta^{-1/4 + o(1)}$ because each edge has length $\delta$. 

On the other hand, it is also known (see \cite {Beffara}) that the scaling limit of LERW (i.e., SLE$_2$) is a random simple curve with Hausdorff dimension $5/4$.
In fact, it has been recently shown \cite{LR} that SLE$_2$ can be parametrized by its $5/4$-dimensional Minkowski content, (often referred to as the natural parametrization).
Recall that the $d$-dimensional
Minkowski content of a curve $\g$ is defined as:
\begin{align*}
 \mbox{Cont}_{d}(\gamma) = \lim_{\e \to 0} \e^{d-2} \mbox{Area}\{z \;:\; d(z,\gamma) \leq \e\}
\end{align*}
provided that the limit exists.

It is natural to expect that in fact, the suitably renormalized discrete length of the LERW should converge to the $5/4$-dimensional content of the limiting SLE$_2$. This non-trivial fact turns out to be correct:
Let $\Omega$ be a bounded simply connected domain with analytic boundary such that $0 \in \Omega$ and for each $\d$, recall that $\Omega^\d$ is a lattice approximation of $\O$ in $\d \Z^2$.
Consider a loop erased random walk starting at $0$ in $\Omega^\d$ (i.e., the loop erasure of a simple random walk stopped at the time $\tau^\d$ at which it hits $\partial \Omega^\d$), which we view as a continuous curve that takes one unit of time to cross an edge, and denote by $\gamma^\d$ its time-reversal.

The following result of \cite{LV2} 
will be an essential building block in our paper, which enables us to fine-tune the scale and control the cutting procedure.   
Here and in the rest of the paper, $\cst$ will denote a particular absolute constant (that can be viewed as a lattice-dependent constant -- it is here the constant associated to $\Z^2$; with other planar lattices, the same result would hold but with a different constant $\cst$).

\begin{res}[{\cite[Theorem 1.1]{LV2}}]\label{res:convLERW}
Let $\tau^\d$ be the total length of the path $\gamma^\d$. The curve $t \mapsto  \gamma^\d(\cst \, \delta^{-1/4} \min (t,\tau^\delta))$ converges in distribution to radial $\SLE_2$ 
curve $t \mapsto \gamma( \min (t, \tau))$ in $\Omega$ (starting from a point chosen with respect to the harmonic measure on $\partial \Omega$ seen from $0$) in its natural parametrization (where $\tau$ denotes its total natural length) for the topology of supremum norm.
\end{res}

In Appendix \ref {sec:app}, we will combine Result \ref{res:convLERW} with Wilson's algorithm to derive the following results:
\begin{itemize}
\item The convergence of the wired UST in a bounded domain $\Omega$, in its (properly renormalized) arc-length parametrization (Proposition \ref{cor:lengthwiredUST}).
\item The convergence, with their properly renormalized arc-length parametrizations, of the plane UST (Proposition \ref{prop:lengthplanarUST}) and the free UST (Proposition \ref{prop:lengthfreeUST}) -- the main difficulty lies in the proof of the statement for the free UST.
\item The convergence of the joint law of the UST and its dual, with their properly renormalized length parametrization (Corollary \ref{prop:joint}).
\end{itemize} 
In the remainder of this paper, when we will refer to the {\em appropriately rescaled} lengths of branches of discrete trees on $\delta \Z^2$ (or subgraphs of it), it will always
mean that one uses  $\delta^{1/4} / \cst$ times the Euclidean length parametrization (so that this appropriately rescaled length is the one that converges to the natural parametrization).

\subsection {Scaling limit of the cutting dynamics}

Recal that $\Omega$ is either the entire plane or a simply connected bounded domain with $\C^1$ boundary, and $\Omega^\d$ denotes its discretization at mesh size $\d$.

Let us now define the discrete cutting procedure. Let  $(-\tau_{e})$ be a family of i.i.d random exponential times with mean $\cst \d^{-5/4}$, indexed by the set of (non-oriented) edges $e$ of $\O^\d$.
We start at time $t=0$ with a UST $\G^\d(0)$ on $\O^\d$ independent of the family $(\tau_{e})$. For a fixed time $t < 0$, we define $\G^{\d}(t) \subseteq \G^{\d}(0)$ 
to be the spanning forest that is obtained from $\G^\d (0)$ by removing all the edges $e$ with $\tau_{e} \in (t,0]$ (viewed in the Schramm space, we remove all the paths that
go through at least one of these edges).
This defines a nested family of forests $(\G^{\d}(t))_{t \le 0}$. Note that the limit point $\G^{\d}(-\infty)$ is a graph without edges (encoded in the Schramm space by the collection $\{(v,v,\{v\}) : \ v \in \O^\d \}$).

Let us now define the continuous counterpart of this discrete cutting procedure. We first sample (for a given $\Omega$) the continuous UST $\T=\G (0)$. For any fixed $z_1, \ldots, z_n$, 
the $5/4$-Minkowski content of the tree $\T_{z_1, \ldots, z_n}$ is almost surely finite (this follows from Result \ref {res:convLERW} and our description of the scaling limit of the law of subtrees of the free UST in the appendix, as being absolutely continuous with respect to those of the wired UST scaling limit). We then sample a Poisson point process on this finite tree, so that marked points appear at negative times with an intensity given by the $5/4$-Minkowski content. As we do this simultaneously for any finite set of points $z_i$, we in fact are having marks appearing on the ``backbone'' of the continuous UST. 
We then define the continuous forest $\G (t)$ that corresponds to the continuous tree, by cutting all marked points that have appeared in the time-interval $(t,0]$.
\begin{figure}[ht!]
\centering
\includegraphics[width=7cm]{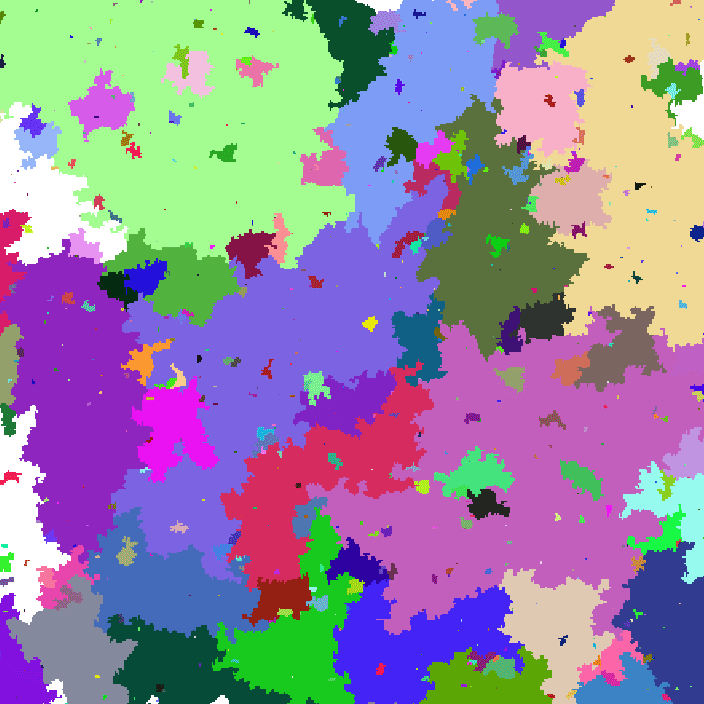}
\hspace{1cm}
\includegraphics[width=7cm]{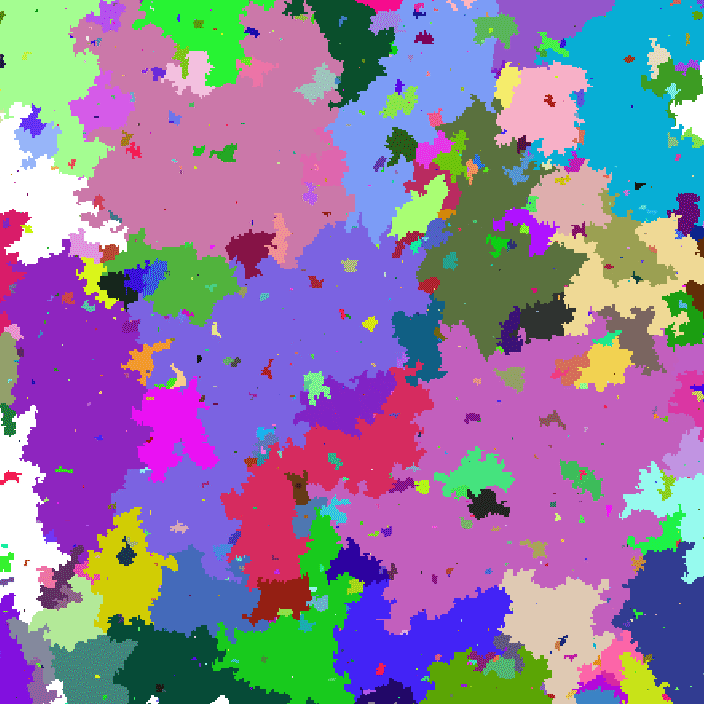}
\caption{Simulations of $\G^{\d}(-3)$ and $\G^\d(-4)$ (different clusters are indicated in different colors): the latter is obtained from the former by cutting, while the former is obtained from the latter via the glueing Markov process}
\end{figure}
 
Note that when $\Omega$ is the entire plane, the underlying metric used to define the Schramm space is the spherical metric, but the cutting procedure uses the $5/4$-dimensional content associated to the Euclidean metric, as it should correspond to the limit of the discrete length of the LERW on the graph.   

\begin{prop}\label{propo:CV_cutoff_process}
The process $(\G^{\d}(t))_{t \le 0}$ converges in distribution (in the sense of finite-dimensional distributions 
in $\SS_\Omega$) to the process $( \G(t))_{t \le 0}$.
\end{prop}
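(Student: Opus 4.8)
The plan is to fix finitely many times $0\ge t_1>\cdots>t_k$ and to prove the joint convergence of $(\G^\d(t_1),\ldots,\G^\d(t_k))$ towards $(\G(t_1),\ldots,\G(t_k))$ by combining three ingredients: (i) the convergence in distribution of the discrete UST \emph{together with the renormalized lengths of its branches} (Result~\ref{res:convLERW} and the corollaries and propositions listed right after it, which cover both the bounded-domain and the whole-plane case); (ii) the approximation of the UST in $\SS_\O$ by its finite subtrees $\T_{z_1,\ldots,z_n}$, with the control uniform in the mesh provided by Result~\ref{th.finiteness.S2}; and (iii) the observation that once one is given the combinatorial tree, the arcs realizing its edges as curves in $\overline\O$, the renormalized lengths of these arcs, and the time-marked locations of the cuts along them, the forest $\G^\d(t_j)$ (resp.\ $\G(t_j)$) is the image under a fixed map which is continuous at the generic configuration. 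Throughout, when $\O$ is the plane we use the spherical metric, so that all relevant spaces stay compact.

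First I would treat a fixed finite subtree. Let $z_1,z_2,\ldots$ be a deterministic dense sequence in $\O$, and write $\T^\d_n:=\T_{z_1,\ldots,z_n}(\O^\d)$, $\T_n:=\T_{z_1,\ldots,z_n}(\O)$, and let $(\T^\d_n)_{\mathrm{cut}}(t)$ (resp.\ $(\T_n)_{\mathrm{cut}}(t)$) be the forest obtained by cutting this finite subtree at the cut points that appeared in $(t,0]$, viewed as an element of $\SS_\O$. For generic $z_i$ the continuous $\T_n$ has a well-defined combinatorial type (a finite tree with at most $2n$ leaves and branchpoints), each of its finitely many edges being an arc of the continuous UST carrying a finite $5/4$-Minkowski content; the discrete $\T^\d_n$ has the same combinatorial type with high probability, its edges being LERW arcs carrying a number $L^\d$ of lattice edges. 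By Result~\ref{res:convLERW} and the corollaries cited above, the data (combinatorial type; the edge-arcs of $\T^\d_n$ as curves in their natural/arc-length parametrization; the renormalized lengths $\d^{5/4}L^\d$ of these arcs) converges in distribution to its continuous analogue for $\T_n$. Since the cut clocks $(\tau_e)$ are independent of the UST, one can append to this the convergence of the cut point processes: along an arc with $L^\d$ edges and renormalized length $\to\ell$, the configuration $\{(\text{renormalized position of }e,\ \tau_e)\}$ converges to a Poisson point process on $[0,\ell]\times(-\infty,0]$ with intensity $\mathrm{d}u\otimes\mathrm{d}s$, jointly over the finitely many arcs --- this is merely the convergence of a family of point processes to a Poisson process, and through the convergence of the arcs in their natural parametrization it upgrades to the joint convergence of the arcs together with the cut points seen as points of $\overline\O$. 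Reconstructing $(\T^\d_n)_{\mathrm{cut}}(t_j)$, resp.\ $(\T_n)_{\mathrm{cut}}(t_j)$, simultaneously for $j=1,\ldots,k$ from this data is then a fixed continuous operation, except on the event that a cut falls exactly on a branchpoint or that two arcs degenerate, which has probability zero for the limiting configuration. The continuous mapping theorem therefore gives, for every fixed $n$,
\[
\big((\T^\d_n)_{\mathrm{cut}}(t_1),\ldots,(\T^\d_n)_{\mathrm{cut}}(t_k)\big)\ \xrightarrow[\d\to0]{(d)}\ \big((\T_n)_{\mathrm{cut}}(t_1),\ldots,(\T_n)_{\mathrm{cut}}(t_k)\big).
\]

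Next I would remove the cutoff. The point is that cutting commutes, up to errors confined to $\e$-balls, with the finite-subtree approximation: if $\T^\d_n$ is a strong $\e$-approximation of $\G^\d(0)$, then $(\T^\d_n)_{\mathrm{cut}}(t)$ is a strong $O(\e)$-approximation of $\G^\d(t)$, since a cut located in the discarded ``fringe'' of a branch only disconnects a piece of diameter $O(\e)$ --- which the notion of strong $\e$-approximation already tolerates (the $\omega_a,\omega_b$ pieces) --- whereas a cut in the macroscopic part is faithfully recorded by the finite subtree; this verification is of the same nature as the approximation arguments of \cite{S0}, and it uses that the cutting dynamics can only refine the partition into clusters. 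By Result~\ref{th.finiteness.S2} one can pick, for each $\e>0$, an $n=n(\e)$ and a scale $\delta_\e$ such that, uniformly in $\d<\delta_\e$, $\T^\d_{n}$ is a strong $\e$-approximation of $\G^\d(0)$ with probability at least $1-\e$; the analogous statement holds for $\T_n(\O)$ and $\G(0)$. Hence $d_\Ha\big((\T^\d_{n(\e)})_{\mathrm{cut}}(t_j),\G^\d(t_j)\big)=O(\e)$ and $d_\Ha\big((\T_{n(\e)})_{\mathrm{cut}}(t_j),\G(t_j)\big)=O(\e)$ for all $j$, with probability at least $1-O(\e)$, uniformly in $\d$ small. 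Combining this with the fixed-$n$ convergence above through the standard three-$\e$ argument yields the desired convergence of finite-dimensional distributions.

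The step I expect to be the main obstacle is precisely this last piece of Schramm-space bookkeeping: making rigorous, and uniform in $\d$, the claim that cutting commutes with the finite-subtree approximation, i.e.\ that the few cuts occurring in the microscopic fringe of the tree can only create or destroy connections inside $\e$-balls and therefore cost only $O(\e)$ in $d_\Ha$ on $\SS_\O$. This requires marrying the strong $\e$-approximation of Result~\ref{th.finiteness.S2} with the monotonicity of the cutting dynamics, and it is where the Schramm-space formalism has to be handled carefully. The remaining ingredients --- the Poissonian convergence of the cut clocks and the continuity at generic configurations of the ``build the cut forest'' map --- are comparatively routine once Result~\ref{res:convLERW} is granted.
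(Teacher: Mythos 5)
Your proposal is correct and follows essentially the same route as the paper: fix a cutoff $n$, use the joint convergence of the finite subtree together with the renormalized lengths of its edge-arcs to control the cut finite subtree, then use Schramm's strong $\e$-approximation (Result~\ref{th.finiteness.S2}) to show that passing from the cut finite subtree to the full cut forest costs only $O(\e)$ in Hausdorff distance uniformly in $\d$, and close with a three-$\e$ argument. The only cosmetic difference is that the paper constructs an explicit coupling of the Poisson cut-point processes on the discrete and continuous arcs (which makes the fixed-$n$ step a direct estimate), whereas you invoke joint convergence in distribution followed by the continuous mapping theorem at generic configurations; these are equivalent implementations of the same idea.
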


Note that the following proof will in fact establish a slightly stronger Skorokhod-type convergence on c\`adl\`ag processes.

\begin{proof}
We fix $\e,\eta>0$ and $t_0 < 0$ and our goal is to show that when $\d$ is small enough, one can couple the processes $(\G^\d(t))$ and $(\G(t))$ in such a way that on a set of probability at least $1 - 3 \eta$, for all $t \in [t_0, 0]$, $d_{\Ha}(\G^\d(t),\G(t)) \leq 3 \e$. 

We first find (using Result \ref {th.finiteness.S2}) a finite net $z_1, \ldots, z_n$  such that with probability greater than $1-\eta$, the finite subtree $\T_n := \T_{z_1, \ldots, z_n}$ generated by $z_1,\ldots,z_n$ is a strong $\e$-approximation of $\G(0)$, i.e., it differs from it by appending little trees of diameter less than $\e$ (by a slight abuse of notation, $\T_n$ will represent the tree both as a union of branches and as a point in Schramm space) and that for all $\d$ small enough, with probability greater than $1-\eta$, the finite subtree $\T_n^\d := \T_{z_1, \ldots, z_n}^\d$ generated by $z_1,\ldots,z_n$ is a strong $\e$-approximation of $\G^\d(0)$. 
In particular, to understand the cut forests $\G(t)$ (respectively $(\G^\d (t))$) up to a distance smaller than $\e$ and on an event of probability at least $1- \eta$, it will be sufficient 
to look at how $\T_n$ (resp. $\T_n^\d$) is being cut (the effect of additional cuts outside of $\T_n$ or $\T_n^\d$ would not move things in the Schramm space by more than $\eps$). 

Let us denote by $\T_n(t)$ the cutting process of the tree $\T_n$, i.e., the graph $\T_n\cap\G(t) \in \SS_\Omega$.
We similarly define  $\T^\d_n(t)$ the discrete cutting process of $\T_n^\delta$.
The tree $\T^\d_n$ can be divided into $n-1$ disjoint simple paths as in the way provided by Wilson's algorithm: $\gamma^\d_2$ denotes the branch from $z_2$ to $z_1$, and for all $k$ in $\{ 3, \ldots, n\}$, $\gamma_k^\d$ denotes the branch from $z_k$ to the subtree containing $z_1, \ldots, z_{k-1}$. Similarly, we can define $\gamma_2, \ldots, \gamma_n$ in $\T_n$. 

Propositions \ref{prop:lengthplanarUST} and \ref{prop:lengthfreeUST} tell us that the finite subtrees $\T^\d_n$, together with their appropriately rescaled length measure converge: any of the branches from $z_i$ to $z_j$ converges for the topology of supremum norm, to branches of the continuous tree $\G(0)$ in their natural parametrization. 
More specifically, when $\d$ is small enough, we can couple the trees $\T^\d_n$ and $\T_n$ in such a way that with probability at least $1- \eta$: 
(i)  for each $k \in \{ 2 , \ldots, n\}$, the total appropriately rescaled length of $\g^\d_k$ is $\eta/(|t_0 |n )$-close to the natural length of $\g_k$, and (ii)  for each $k \in \{ 2 , \ldots, n\}$, the two paths $\g^\d_k$ and $\g_k$ are uniformly $\eps$-close (in the sup-norm for those parametrizations, on the time-interval where they are both defined). 
 
We then couple the cutting dynamics in the discrete and in the continuum using the same exponential clocks: 
we sample $n-1$ independent Poisson point processes of intensity $|t_0|$ and we transfer these Poisson point processes onto the $n-1$ 
discrete and continuous branches using respectively the appropriately rescaled length and the natural parametrization (in the discrete setting, when at least one Poisson mark falls in an interval corresponding to an edge, we remove this edge). 
Condition (i) and (ii) ensure that when $\d$ is small enough, with a probability at least $1- \eta$, the number of 
Poisson marks that did fall in each branch $\g^\d_k$ and $\g_k$  will be identical, and that the location of these marks will be $\eps$-close. 

Putting the pieces together, we get when $\d$ is small enough, one has a coupling such that on a set of probability $1-3\eta$, for all time $t\in[t_0,0]$,
\[
d_{\Ha} ( \G^\d(t) , \G(t)) \le d_{\Ha}(\G^\d(t), \T^\d_n(t)) + d_{\Ha}(\T^\d_n(t),\T_n(t)) + d_\Ha(\T_n(t),\G(t)) \leq 3 \e.
\]
\end{proof}

\section{The structure graph and the scaling limit of the glueing dynamics}\label{sec:forw}

Let us now focus on the flow that one obtains when one looks at the time-reversal of the cutting dynamics on some interval $[t,0]$.

\subsection{Description of the discrete glueing dynamics} 

Recall that if we observe $\G^\d (t)$ for some given $t <0$, we can recreate the conditional law of $ ( \G^\d (s))_{ s \in [t,0]}$ in the following way. Denote by $n$ the number of connected components of $\G^\d(t)$. Let us pick uniformly a set of edges $E$ among all sets of $n-1$ edges of $\d \Z^2$ such that $\G^\d(t) \cup E$ is a spanning tree of $\Omega^\delta$. The graph $\G^\d(t)$ then evolves by iteratively gaining edges of $E$ (picked in uniform order), at the jump times of a Poisson process conditioned on jumping $n-1$ times in $[t,0]$ (or equivalently, edges of $E$ appear at independent uniformly chosen times).

Let us rephrase this evolution in a way that is more tractable in the continuum limit. We first (deterministically) associate to each $\G^\d(t)$ a structure graph $\St^\d(t)$ as described in the introduction:
Each connected component $c$ of $\G^\d(t)$ becomes a site of the structure graph $\St^\d(t)$. Two neighboring (and distinct) connected components are linked by an edge in the structure graph, which carries a positive weight equal to $\d^{5/4}$ times the number of edges in $\O^\d$ between the two connected components (edges with one end-point in each of the connected components).

By construction, the trace of the set of edges $E$ on the structure graph (which shows how the connected components of $\G^\d(t)$ are connected in the graph $\G^\d(0)$) has the law of the weighted spanning tree $\T(\St^\d(t))$ on $\St^\d(t)$. This describes the Markovian evolution of the discrete glueing dynamics when seen on structure graphs (each edge that is in the weighted tree then appear uniformly at random in the interval $[t,0]$).

Note that the conditional distribution of the evolution of $(\G^\d (s))_{s \in [t,0]}$ given the initial data $\G^\d (t)$ and the evolution of the structure graph $(\St^\d (s))_{s \in [t,0]}$ is easy to describe.
When two sites 
$c$ and $c'$ of $\St^\d (s^-)$ merge at time $s$, one recovers the graph $\G^\d (s)$ by adding to $\G^\d (s^-)$ an edge picked uniformly among the edges of $\d \Z^2 \setminus\G^\d (s^-)$ that join $c$ and $c'$.

 \subsection {Definition of the continuous structure graphs}
The first non-trivial job when trying to make sense of the continuous counterpart of this glueing dynamics on structure graphs is to construct the continuous structure graphs $\St(t)$.
For a point $z_0$ which does not lie inside a branch of the dual tree, let us formally define its connected component $c$ in $\G(t)$ as a subset of $\R^2$: $c$ is the closure of all the points $z$ such that there is a branch from $z_0$ to $z$ in $\G(t)$. Now, we would like the vertices of $\St(t)$ to be the connected components of $\G(t)$ and there should be an edge between two vertices of $\St(t)$ (i.e., connected components of $\G(t)$) whenever these components are not disjoint (i.e., whenever they share a piece of their boundaries).

The candidate  for the weight of these edges is (up to a constant) the $5/4$-dimensional Minkowski content of the interface between the corresponding clusters. Here we can note that this interface is in fact made of portions of branches in the dual tree, which suggests that we will need to control the lengths of the branches in the dual of the continuous tree.
This is the purpose of the next result (we defer its proof to Section \ref{lastsection}) that then defines, for each $t \le 0$, the weights of the structure graph $\St (t)$ and shows that they are indeed the limits of their discrete counterparts: 

\begin{prop}[Weights of the continuous structure graph]\label{lem:weightSt}
Consider two given points $z_0$ and $z_1$ in $\O$, and the connected components $c_0^\d(t)$ (resp. $c_0(t)$)  and $c_1^\d(t)$ (resp. $c_1(t)$) of $\G^\d (t)$ (resp. $\G(t)$) that they are part of, and let $l^\d (z_0, z_1)$ be the renormalized length  of the interface between $c_0^\d(t)$ and $c_1^\d(t)$ (respectively the $5/4$-dimensional Minkowski content $l(z_0, z_1)$ of the intersection between $c_0(t)$ and $c_1(t)$) when it exists.
Then, for each given $t$, the couple $(\G^\d (t), l^\d (z_0, z_1) )$ converges in distribution to $(\G(t), l (z_0, z_1))$. 
\end{prop}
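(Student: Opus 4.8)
The plan is to reduce the statement to the convergence of the length of a suitable collection of dual branches, and then to apply Result~\ref{res:convLERW} (together with the dual-tree convergence recalled at the end of Section~\ref{subsec:Schramm}) exactly as in the proof of Proposition~\ref{propo:CV_cutoff_process}. First I would fix $t\le 0$ and observe that the interface between $c_0^\d(t)$ and $c_1^\d(t)$ is a union of portions of branches of the \emph{dual} tree $\G^\d(0)^\star$: indeed, an edge of $\O^\d$ separating $c_0^\d(t)$ from $c_1^\d(t)$ is an edge that was cut from the primal UST before time $t$, hence its dual edge belongs to $\T(\O^\d)^\star$, and the collection of such dual edges forms a (finite) union of arcs of the dual tree whose combinatorial length, times $\d^{5/4}$, is exactly $l^\d(z_0,z_1)$. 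So the first step is: identify, deterministically from $(\G^\d(0),\G^\d(0)^\star)$ and the cutting marks, which dual-tree branches make up the interface, and express $l^\d(z_0,z_1)$ as a finite sum of renormalized lengths of dual branches between explicit dual vertices.

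The second step is to pass to the limit. By the joint convergence of $(\G^\d(0),\G^\d(0)^\star)$ together with the length measures on both the primal and dual trees (this is the third bullet following Result~\ref{res:convLERW}, combined with Schramm's result that the dual is a deterministic function of the primal), we get that $\G^\d(t)$ converges to $\G(t)$ and that the relevant finite family of dual branches, equipped with their natural (i.e. $\tfrac54$-Minkowski-content) parametrization, converges in the supremum norm to the corresponding dual branches of the continuous tree. As in the proof of Proposition~\ref{propo:CV_cutoff_process}, I would work on the event (of probability $\ge 1-\eta$) that a large finite subtree $\T_n$, and its dual counterpart, form strong $\e$-approximations, so that up to error $\e$ the clusters $c_0^\d(t),c_1^\d(t)$ and the interface between them are determined inside this finite combinatorial picture; there the cutting marks, transferred via the parametrization by length, converge, so the set of dual branches realizing the interface stabilizes and its total renormalized length converges to $l(z_0,z_1)$. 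Passing $\e,\eta\to0$ gives the claimed joint convergence.

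There are two points that require care, and the main obstacle is the first. \emph{(a) Degeneracy / discontinuity of the length functional.} The map ``$(\G(t),\text{marks})\mapsto l(z_0,z_1)$'' is not continuous everywhere: the identity of the interfacing branches can jump when a cut mark sits exactly at a branch-point of the dual tree, when $c_0(t)=c_1(t)$ (in which case $l=0$), or when $z_0,z_1$ are among the exceptional non-uniqueness points of $\G(0)$. One must argue that these bad configurations have probability zero for the continuous model — using that the Poisson marks are a.s.\ at positive distance from the (countable) set of dual branch-points and from the endpoints of the finitely many relevant branches, and that $z_0,z_1$ are a.s.\ good points — and that the discrete probabilities of the corresponding ``nearly bad'' events are small uniformly in $\d$; this last uniformity is where one invokes the discrete UST estimates of Section~\ref{lastsection}. \emph{(b) Finiteness of $l(z_0,z_1)$.} One must know a priori that the $\tfrac54$-Minkowski content of the interface is finite; this follows because the interface is contained in the union of finitely many dual SLE$_2$-branches, each of which has finite content by Result~\ref{res:convLERW}/the natural parametrization of SLE$_2$. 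Once (a) and (b) are in place, the convergence is a routine consequence of the Portmanteau theorem applied along the coupling built in Proposition~\ref{propo:CV_cutoff_process}.
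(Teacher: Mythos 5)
Your proof sketch misses the central difficulty of the proposition and, as written, would not close the argument. You correctly identify that the discrete interface is a union of dual arcs, and that it lies inside the dual cycle $\mathcal C^\d$ closed up by the first cut $e$; but you then assert that after passing to a strong $\e$-approximation $\T_n$ of the (primal and dual) tree, ``the clusters $c_0^\d(t),c_1^\d(t)$ and the interface between them are determined inside this finite combinatorial picture,'' so that ``the set of dual branches realizing the interface stabilizes.'' This is precisely what fails. As the paper stresses, infinitely many microscopic clusters squeeze in between $c_0(t)$ and $c_1(t)$: every cut on a small branch of the primal tree that touches $\mathcal C$, no matter how small, removes a macroscopic-in-number (indeed, in the limit, infinite) collection of pieces from $\mathcal C$, and the paper even remarks that the total tree length needed to reach all of $\mathcal C$ is expected to be infinite. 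A strong $\e$-approximation in the Schramm (Hausdorff) topology gives you no control on lengths: displacing the configuration by $\e$ in Hausdorff distance can change $l^\d(z_0,z_1)$ by an amount that does not go to $0$ with $\e$. So the step ``up to error $\e$ the interface is determined inside $\T_n$'' is not available, and Portmanteau applied to the coupling from Proposition~\ref{propo:CV_cutoff_process} does not give the result.

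What is actually needed — and what the paper's proof provides — is a \emph{quantitative}, $\d$-uniform bound on the expected renormalized length of the set of points of $\mathcal C^\d$ that are cut out at scale smaller than $\e$; this is Lemma~\ref{prop_lengthdiscret}, proved via the four-arm exponent for the UST (Lemma~\ref{ALN}). One also needs Lemma~\ref{finitelength} to control the length of $\mathcal C^\d$ in a small neighbourhood of the special edge $e$ (your point (b) acknowledges a finiteness issue but glosses over this localisation, which matters because $e$ is a distinguished point on the primal backbone, not a generic dual point). With these two lemmas in hand, the paper sets up a squeeze: for $l_\e$ the length of the part of $\mathcal C$ not cut out at scale $\ge\e$, the finitely many macroscopic cut-outs converge so that $\liminf_k l_{2\e}^{\d_k}\ge l_\e\ge\limsup_k l_{\e/2}^{\d_k}$, while Lemma~\ref{prop_lengthdiscret} forces $\E(l^{\d_k}-l_{2\e}^{\d_k})\to0$ as $\e\to0$ uniformly in $k$, and $l_\e\downarrow l(z_0,z_1)$ a.s. Your concern (a) about cut marks landing on branch points, $c_0(t)=c_1(t)$, or exceptional $z_i$ is a genuine but routine measure-zero matter; the real obstruction is the uncontrolled accumulation of microscopic cut-outs, which your sketch does not address.
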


Mind that this is not a trivial fact, because the structure graphs are rather complicated: we have to handle the infinitely many microscopic clusters appearing in the scaling limit and that will squeeze in between two macroscopic ones. One point in the proof (deferred to Section \ref{lastsection}) will be to control the effect of this feature.

\medbreak

In order to define the Markov dynamics on such structure graphs, we will need to define the (weighted) forests and trees on them. In order to do so, we will choose exhaustions $(\St_\e(t))_\e$ and $(\St^\d_\e(t))_\e$ of the graphs $\St (t)$ and $\St^\d (t)$. Recall that the limiting laws on forests (when $\eps \to 0$) do not depend on the choice for the exhaustions (see, e.g., \cite[\S 5]{BLPS}). In particular, we can choose exhaustions depending on the whole data of $\G^\d(t)$ (resp. $\G(t)$) as we see fit:
For all $\e >0$, we define the vertex set of $\St^\d_\e(t)$ (resp. $\St_\e(t)$) to be the subset of the vertex set of $\St^\d (t)$ (resp. $\St (t)$) consisting of the 
connected components  $\G^\d(t)$ (resp. $\G(t)$) that have a diameter at least $\e$ (when $\Omega$ is the entire plane, we use the spherical metric here).
The weighted edges between vertices of $\St^\d_\e(t)$ and $\St_\e(t)$ are then exactly those of $\St^\d (t)$ and $\St (t)$.

Note that the graph $\St_\e(t)$ is almost surely finite: indeed, by Result \ref{th.finiteness.S2}, we can almost surely find a strong $\e$-approximation of $\G(0)$ by a subtree $\T_n$, 
where $n$ is random but almost surely finite. 
The number of vertices of the graph $\St_\e(t)$ will then be not larger than the number of connected components of the forest $\T_n(t)$. It is also immediate to see that $(\St^\d_{\e}(t))_\e$ (resp. $(\St_{\e}(t))_\e$) exhausts $\St^\d(t)$ (resp. $(\St(t)$).

We now state the convergence of the structure graph. We use the discrete topology on finite graphs, and for a given finite graph, weights form a real vector space that we equip with its natural topology.
\begin{cor}[Discrete to continuous structure graph convergence]\label{thmm:cv-struct-graph}
For each $t <0$, for all but (at most) countably many positive $\eps$, the finite random graph $\St^\d_\e(t)$ converges in probability to $\St_\e(t)$ as the mesh size $\d$ goes to $0$.
\end{cor}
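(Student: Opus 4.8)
The plan is to deduce the corollary from Proposition \ref{lem:weightSt} together with Result \ref{th.finiteness.S2}, by controlling the three pieces of information that make up the finite graph $\St^\d_\e(t)$: its vertex set, its edge set, and its edge-weights. First I would fix $t<0$ and note that, by Result \ref{th.finiteness.S2} and the discussion following Proposition \ref{propo:CV_cutoff_process}, the configuration $\G(t)$ can with probability $1-\eta$ be recovered up to Hausdorff distance $\e'$ (for $\e'$ small compared to $\e$) from a finite subtree $\T_n(t)$, and the same holds in the discrete for $\G^\d(t)$ with $\T^\d_n(t)$ once the mesh is small; moreover by Proposition \ref{propo:CV_cutoff_process} the discrete cut forest converges in distribution to the continuous one, and we may pass to a coupling realizing this convergence (Skorokhod). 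Under this coupling, the connected components of $\G^\d(t)$ of diameter $\ge\e$ are in bijection with those of $\G(t)$ of diameter $\ge\e$, \emph{provided} $\e$ is a continuity value — i.e. provided almost surely no component of $\G(t)$ has diameter exactly $\e$. Since the components of $\G(t)$ form a countable family, each having a well-defined diameter, the set of "bad" values of $\e$ (those hit by some component diameter with positive probability) is at most countable; this is exactly where the "all but countably many $\e$" clause comes from.

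The second step is to match the edges. Two components of $\G(t)$ of diameter $\ge\e$ are joined by an edge of $\St_\e(t)$ precisely when their boundaries intersect, i.e. when $l(z_0,z_1)>0$ for representative points $z_0,z_1$; in the discrete, they are joined when the renormalized interface length $l^\d(z_0,z_1)$ is positive. By Proposition \ref{lem:weightSt}, applied to finitely many pairs of points $z_0,z_1$ (one pair per potential edge among the finitely many large clusters, chosen measurably from the coupling), the pair $(\G^\d(t), l^\d(z_0,z_1))$ converges jointly to $(\G(t), l(z_0,z_1))$. So the weights converge, and in particular, again for all but countably many $\e$ (those for which, with positive probability, some interface between two $\ge\e$-clusters has content exactly $0$ as a limit while being built from microscopic discrete interfaces, or vice versa — this is the delicate degeneracy), the edge set stabilizes: a discrete edge of positive weight persists to a continuous edge of positive weight and conversely. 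Since on a fixed finite vertex set the edge-weight vector determines the graph in the topology we use, convergence in probability of $(\St^\d_\e(t))$ to $\St_\e(t)$ follows: for a continuity value of $\e$, with probability $1-O(\eta)$ the vertex sets are identified, the finitely many weights are each within $\e''$ of their limits, and no limiting weight equals $0$, so the combinatorial graphs coincide and the weights converge.

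The main obstacle, and the reason the corollary is phrased with "convergence in probability" rather than almost surely and only "for all but countably many $\e$", is the borderline behavior of interfaces: in the scaling limit infinitely many microscopic clusters squeeze in between two macroscopic ones, so a given discrete interface between two large clusters may be carried, in the limit, partly onto the boundary of intervening small clusters, and one must rule out that its renormalized length converges to $0$ (which would spuriously delete the edge) or that a genuinely-adjacent pair in the limit had only microscopically-many connecting edges in the discrete. Proposition \ref{lem:weightSt} is precisely designed to handle this — its statement already flags that controlling the microscopic clusters "squeezing in between two macroscopic ones" is the crux — so the work here is to package that input: one invokes it for the finitely many relevant pairs, uses the continuity-of-$\e$ choice to avoid weights landing exactly on the threshold $0$, and then the identification of the finite weighted graphs is routine. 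I would also remark that one should choose the points $z_0,z_1$ as \emph{deterministic} representatives (e.g. running over the dense sequence $z_1,z_2,\dots$ used throughout, two per cluster) so that Proposition \ref{lem:weightSt} applies verbatim, rather than to random points depending on the coupling.
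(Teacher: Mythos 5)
Your proposal follows essentially the same skeleton as the paper's brief proof: the convergence of $\G^\d(t)$ (Proposition~\ref{propo:CV_cutoff_process}) handles the vertex sets once one restricts to $\eps$ not equal to any cluster diameter, Proposition~\ref{lem:weightSt} handles the edge weights, Result~\ref{th.finiteness.S2} gives finiteness, and the countable exception set comes from the countably many cluster diameters. Your remark that the representative points $z_0,z_1$ should be chosen deterministically, so that Proposition~\ref{lem:weightSt} applies as stated, is a sensible clarification.

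The one substantive slip is your second invocation of ``for all but countably many $\eps$'' to handle the scenario where an interface between two $\ge\eps$-clusters has limiting content exactly zero. That exception set is not a priori countable: for a fixed realization, the family of $\ge\eps$-clusters grows as $\eps$ decreases, so the event ``some pair of them has a zero-content interface'' is monotone in $\eps$, and the set of $\eps$'s for which it has positive probability is therefore a downward interval of the form $(0,\eps_*]$. If this degeneracy arose at all, it would spoil the corollary for an uncountable set of $\eps$. The paper's only countable exception is the one coming from cluster diameters. To dispose of the zero-content scenario you would instead need a separate, $\eps$-independent fact --- that almost surely no two adjacent macroscopic components of $\G(t)$ have an interface of zero $5/4$-Minkowski content --- which the paper leaves implicit (the heuristic discussion following Lemma~\ref{finitelength} points in that direction). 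Alternatively, if one regards the structure graph as a weight vector on a fixed finite vertex set, with weight zero meaning the absence of an edge, then Proposition~\ref{lem:weightSt} already yields convergence of that vector, which sidesteps the question of the edge set altogether.
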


This results follows directly from Proposition \ref{lem:weightSt} (i.e., the convergence of the weights of the edges) and the convergence of $\G^\d(t)$ to $\G(t)$.
The values of $\eps$ we exclude here are those for which, with positive probability, there is a cluster in $\G(t)$ that is of diameter exactly equal to $\eps$. As we know that there are countably many clusters, it follows that this can happen (for each fixed $t$) for at most countably many $\eps$.
One could of course also (try to) prove that this never happens, but the present result will be enough for our purposes.


\subsection {Abstract definition of the Markovian dynamics on structure graphs} 
We are now ready to define the Markovian dynamics on the set of structure graphs. For a given $t$ and a given weighted graph $\St(t)$: 
\begin {itemize} 
 \item First, sample a weighted free spanning forest on $\St(t)$, and for each edge of this forest, sample independently a uniform random variable on $[t,0]$ that indicates when this edge appears.
 \item Then, construct the graph at time $s \in [t,0]$ by contracting all edges that have appeared before time $s$, and using the addition rule for weights: when two sites $s_1$ and $s_2$ merge into a site $s_1s_2$, the new weights are given by $w_{new} (s_1s_2, \cdot) = w_{old} ( s_1, \cdot) + w_{old} (s_2 , \cdot)$.
\end {itemize}
Recall that it is not a priori clear that the weighted spanning forest on the structure graph is a tree, but along our proof, we will see that in fact, it is indeed almost surely the case, when one starts this dynamics with the random graph $\St (t)$. Moreover, weights can blow up under the dynamics, depending on initial conditions. That this does not happen when we initiate our dynamics with the structure graphs of our near-critical spanning forests is a consequence of the following Theorem \ref{main1}.

In this way, one defines a process $(\tilde \St(s))_{s \in [t,0]}$, which is the evolution of this Markovian dynamics when applied to the random structure graph $\tilde \St (t) = \St (t)$. The core of the matter is then to prove the following fact: 

\begin {thmm} 
\label {main1}
 The law of $(\tilde \St(s))_{ s \in [t,0]}$ is the same as that of 
$(\St (s))_{ s \in [t,0]}$.
\end {thmm}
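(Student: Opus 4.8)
\textbf{Proof proposal for Theorem \ref{main1}.}

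The plan is to deduce the continuous statement from its discrete counterpart by a careful passage to the limit, using the convergence results already assembled (Proposition \ref{propo:CV_cutoff_process}, Proposition \ref{lem:weightSt}, Corollary \ref{thmm:cv-struct-graph}, and the length-convergence Result \ref{res:convLERW}). The fundamental asymmetry to exploit is that for \emph{each fixed} mesh $\d$ the glueing dynamics seen on the structure graph $\St^\d(t)$ is, by the discussion preceding the theorem, honestly Markovian: conditionally on $\G^\d(t)$, the trace of the ``completing edges'' $E$ on $\St^\d(t)$ is a weighted spanning tree of $\St^\d(t)$, with each edge getting an independent uniform time-label in $[t,0]$. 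So on the discrete side the two processes $(\tilde\St^\d(s))_{s\in[t,0]}$ (dynamics applied to $\St^\d(t)$) and $(\St^\d(s))_{s\in[t,0]}$ (time-reversed cutting, read on structure graphs) are equal in law \emph{exactly}, not just in the limit. The theorem is then the statement that this identity survives the scaling limit.

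First I would fix $t<0$ and a cut-off $\e>0$ (avoiding the at-most-countably-many bad values excluded in Corollary \ref{thmm:cv-struct-graph}), and restrict attention to the finite truncated graphs $\St^\d_\e(s)$ and $\St_\e(s)$. The key point is that the dynamics is compatible with truncation at the level of the \emph{macroscopic} picture: what the truncated structure graph $\St_\e(s)$ records is exactly which $\e$-macroscopic clusters of $\G(t)$ have merged by time $s$ and with what (finite) weights, and by Proposition \ref{propo:CV_cutoff_process} plus Proposition \ref{lem:weightSt} this whole finite object is the limit in distribution of its discrete analogue jointly across finitely many times $s_1,\dots,s_m\in[t,0]$. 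Second, I would show the continuous dynamics is itself the limit of the discrete dynamics: the weighted free spanning forest is a weak limit of weighted spanning trees along the exhaustion $(\St^\d_\e(t))_\e$ (Corollary 4.3 / \S5 of \cite{BLPS}), and since on a fixed finite graph the weighted spanning tree depends continuously on the weights, the law of $(\tilde\St^\d_\e(s))_{s\in[t,0]}$ converges to that of $(\tilde\St_\e(s))_{s\in[t,0]}$ as $\d\to0$ — here one uses Corollary \ref{thmm:cv-struct-graph} for convergence of the initial finite weighted graph and the independence of the uniform time-labels, which pass to the limit trivially. Combining: for each fixed $\e$ and each finite set of times, $(\St_\e(s_i))_i \overset{d}{=} \lim_\d (\St^\d_\e(s_i))_i = \lim_\d (\tilde\St^\d_\e(s_i))_i = (\tilde\St_\e(s_i))_i$. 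Letting $\e\to0$ along a good sequence and using that the truncations exhaust the full structure graphs then upgrades this to the equality of $(\tilde\St(s))_{s\in[t,0]}$ and $(\St(s))_{s\in[t,0]}$ in the sense of finite-dimensional distributions.

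I expect the main obstacle to be controlling the \emph{microscopic clusters}: in the scaling limit infinitely many tiny clusters squeeze in between any two macroscopic ones, they contribute to the weights (this is precisely the subtlety flagged after Proposition \ref{lem:weightSt}), and under the glueing dynamics a chain of microscopic clusters can merge two macroscopic ones via a path that is invisible at scale $\e$. So the delicate step is to prove that the truncated process $\St_\e(s)$ — or rather the induced weighted graph on macroscopic clusters, with weights given by effective conductances through the sea of microscopic clusters — is genuinely a consistent family as $\e\to0$ and that the discrete-to-continuous limit commutes with this ``integrating out'' of small clusters. Concretely, one needs a stochastic-comparison argument (of the kind advertised in the introduction, via the UST monotone coupling from \cite{BLPS}) showing that adding or removing clusters of diameter $<\e$ perturbs both the weights and the law of the spanning forest by an amount tending to $0$ with $\e$, uniformly in $\d$; the uniformity in $\d$ is what makes this genuinely a UST-estimate and is deferred to Section \ref{lastsection}. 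Once that uniform control is in hand, the interchange of the limits $\d\to0$ and $\e\to0$ is justified and the proof closes.
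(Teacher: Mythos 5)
Your strategy is the right one and matches the paper's: exploit the fact that the discrete identity $(\tilde\St^\d(s))_s \overset{d}{=} (\St^\d(s))_s$ is exact, truncate to finite structure graphs, and control the truncation error via the UST monotone coupling from \cite{BLPS}. You also correctly flag the main obstacle — the free weighted spanning forest on the full (infinite) $\St(t)$ could in principle route the glueing of two macroscopic clusters through arbitrarily small ones, and one must rule this out uniformly in $\delta$.

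Where the proposal has a gap is in the mechanism for ruling this out, and in where you claim that mechanism lives. The monotone coupling only gives a one-sided stochastic domination (the forest on the big graph, restricted to a finite subgraph's edges, is dominated by the forest on the finite subgraph); on its own this does \emph{not} imply the two restricted forests coincide with high probability. The paper's essential extra input is not deferred to Section~\ref{lastsection}: it is Schramm's strong-approximation theorem (Result~\ref{th.finiteness.S2}) combined with the convergence of branch lengths to the natural parametrization (Result~\ref{res:convLERW}). Together these show that, with probability at least $1-2\eta$ and uniformly in $\d$, the subtree of $\T(\St(t))$ spanning the $\eps$-macroscopic sites is exactly the adjacency structure produced by the finitely many Poissonian cuts on a fixed finite subtree $\T_{z_1,\ldots,z_n}$, and that these finitely many cuts are pairwise $\eps_1$-separated (for a suitable $\eps_1<\eps$) so that no $\eps_1$-small cluster is ever used. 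Only after this event is exhibited does the monotone coupling become useful: it lets you replace $\T(\St(t))$ (and $\T(\St^\d(t))$) by $\T(\St_{\eps_1}(t))$ (and $\T(\St^\d_{\eps_1}(t))$) on an event of probability $\ge 1-2\eta$, and then Corollary~\ref{thmm:cv-struct-graph} supplies the finite-graph convergence. The ``effective conductance through a sea of microscopic clusters'' picture you propose is a red herring: the paper never integrates out small clusters; it shows the relevant glueing tree avoids them entirely with high probability, so that $\St(t)$ and $\St_{\eps_1}(t)$ give the \emph{same} finite answer. Relatedly, your chain
$(\St_\eps(s_i))_i \overset{d}{=} \lim_\d (\St^\d_\eps(s_i))_i = \lim_\d (\tilde\St^\d_\eps(s_i))_i = (\tilde\St_\eps(s_i))_i$
uses $\tilde\St^\d_\eps$ ambiguously: the truncation of the dynamics run on the full $\St^\d(t)$ is not a priori the same as the dynamics run on the truncated $\St^\d_\eps(t)$. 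The paper's use of two cut-off scales $\eps_1<\eps$ is precisely what disentangles the two, and that double cut-off is missing from your write-up.

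Finally, Section~\ref{lastsection} proves arm-event estimates feeding into Proposition~\ref{lem:weightSt} (convergence of the \emph{weights}); it is an input to Corollary~\ref{thmm:cv-struct-graph}, not a stability statement for spanning forests under removal of small clusters. Reassigning the burden to that section does not close the gap.
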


In loose words, the scaling limit of the Markov dynamics on discrete structure graphs is Markov, and it is described by the simple process on continuous graphs that we have described above. 
 Mind that the theorem is also valid when $\Omega$ is the full plane.

Note that, as in the discrete case, there is a (heuristically straightforward) description of the conditional distribution of $(\G(s))_{s \in [t, 0]}$ given $\G(t)$. Construct first $\St(t)$ and $(\tilde \St (s))_{s \in [t,0]}$. For each contraction of vertices $s(c)$ and $s(c')$ happening on $[t, 0]$, we choose a point $w$ according to the uniform measure on the common boundary of $c$ and $c'$, measured by its $5/4$-dimensional Minkowski content (this common boundary is the union of several portions of dual branches, and its content is well defined, as follows from Lemma \ref{prop_lengthdiscret}; the Minkovski content can be viewed as a proper measure when restricted to these branches -- this follows from Result \ref {res:convLERW} which implies that the Minkovski-content can be used as a continuous time-parametrization of these SLE$_2$-type paths). Let us call $\mathcal{W}(s)$ the countable set of points thus chosen that corresponds to contractions happening before time $s$. For each integer $n$, let $\tilde \G_n(s)$ be the union of the paths $(a,b,\gamma)$, such that $\gamma$ is a path from $a$ to $b$ that can be realized as the concatenation of at most $n$ paths in $\G(t)$, where the points of concatenation belongs to the set $\mathcal{W}(s)$. We 
then define $\tilde \G(s)$ to be the closure in $\SS$ of the union $\cup_n \tilde \G_n(s)$. It is easy to see that $(\tilde\G(s))_{s \in [t, 0]}$ has the same law as the limit of the discrete dynamics $(\G(s))_{s \in [t, 0]}$. Indeed each given branch $(a,b,\g_{a,b}) \in \G(0)$ is almost surely cut a finite number of times, and there almost surely exist a countable family of branches of $\G(0)$ that are dense among the set of all branches of  $\G(0)$ (see Result \ref{th.finiteness.S2}).

\medskip

Let us now explain how to deduce this theorem from the previous propositions. As we shall see, this is quite a soft argument, where we will exploit the tightness-type properties of the 
USTs (derived by Schramm) and coupling ideas.

\subsection {Proof of Theorem \ref {main1}}
Let us first recollect a few facts:
\begin {enumerate} 
 \item 
From  Result \ref{th.finiteness.S2}, we know that for a given $\eta$ and a given $\eps$, we can find a finite set of points $z_1, \ldots, z_n$, such that 
(for both the discrete case for all given $\delta$, and the continuous case), with probability at least $1 - \eta$, the connected components of $\G^\delta(t)$ (resp. $\G(t)$) corresponding to vertices of the graph $\St_\eps^\d (t)$ (resp. $\St_\eps (t)$) all intersect the tree $\T_{z_1, \ldots, z_n}^\d$ (resp. $\T_{z_1, \ldots, z_n}$).
\item On the other hand, for a given choice of $z_1, \ldots, z_n$, the convergence of the branches of the tree joining these points in their natural parametrizations ensures that one can find $\eps_1$ small enough so that (uniformly in $\d$, i.e., for each given $\d$) every connected component of $\G^\delta(t)$ (resp. $\G(t)$) that intersects the finite tree $\T_{z_1, \ldots, z_n}^\delta$ (resp. $\T_{z_1, \ldots, z_n}$) has diameter at least $\eps_1$, and hence corresponds to a vertex in $\St_{\eps_1}^\d (t)$ (resp. $\St_{\eps_1} (t)$) with probability at least $1 - \eta$ (this is because the probability that two cuts out of finitely many being at distance smaller than $\eps_1$ of each other is very small).
\item By the comparison results recalled at the end of Subsection \ref {USTbackground}, the law of the weighted spanning forest in $\St^\d (t)$ when restricted to the edges in $\St^\d_{\eps_1} (t)$ is dominated by the law of the weighted spanning forest in $\St^\d_{\eps_1}(t)$, and the law of the 
weighted spanning forest in $\St (t)$ when restricted to the edge in $\St_{\eps_1} (t)$ is dominated by the law of the weighted spanning forest in $\St_{\eps_1} (t)$. 
In particular, if we are given $n$ sites $s_1, \ldots, s_n$ and see that the tree in the weighted spanning forest in $\St^\d (t)$ that joins these $n$ points stays in the graph 
$\St^\d_{\eps_1}(t)$ with probability at least $A$, then this means that one can couple the weighted spanning forest in $\St^\d (t)$ and $\St^\d_{\eps_1} (t)$ in such a way that these two subtrees 
coincide with probability at least $A$ (and the similar statement holds without the superscript $\d$).
\item 
Finally, from Corollary \ref {thmm:cv-struct-graph}, we know that for all but countably many $\eps_1$, the law of the weighted spanning forest on $\St^\d_{\eps_1} (t)$ converges to that of the weighted spanning forest on $\St_{\eps_1} (t)$ as $\delta \to 0$.
\end {enumerate}
Recall that $(\tilde \St(s))_{s \in [t, 0]}$ is reconstructed from $\St(t)$ by sampling a weighted spanning forest on $\St(t)$, i.e., the limit of a weighted spanning forest in $\St_\eps (t)$ as $\eps \to 0$. 
On the other hand, $(\St (s))_{s \in [t, 0]}$ is reconstructed by taking the limit when $\d \to 0$ of the  weighted spanning forest on $\St^\d (t)$ (indeed, one reconstructs first $\St^\d (s)$ and then takes the limit $\d \to 0$). 

Combining (1) and (2) shows that for all $\eps$, one can find $\eps_1$ small enough such that for all given $\delta$, the subgraphs of $\T (\St(t))$ and of $\T (\St^\d(t))$ that join all the sites of $\St_\eps(t)$ and $\St_\eps^\d (t)$ stay respectively in $\St_{\eps_1}(t)$ and $\St_{\eps_1}^\d (t)$ with probability greater than $1- 2 \eta$. 
By (3), we see that it is therefore possible to couple these subgraphs with those obtained when sampling $ \T( \St_{\eps_1}(t))$ and $\T (\St_{\eps_1}^\d (t))$ instead of 
$\T (\St(t))$ and of $\T (\St^\d(t))$ so that they actually coincide with probability greater than $1-2\eta$. But by (4), we know that for all $\d$ small enough, these two samples can be coupled to be very close. Hence the limit (as $\d \to 0$) of the  weighted spanning forest on $\St^\d (t)$ coincides with the weighted spanning forest on $\St(t)$, which concludes the proof. Note that the argument also shows that the free spanning forest $\T(\St(t))$ is a.s. connected, hence a tree.

Mind that the identity in law between the two processes means the identity in law of all finite-dimensional marginals. And for any $t<s_1 < \ldots < s_n < 0$, we can always choose 
all the $\eps$'s and $\eps_1$'s in the above argument among those for which the convergence in Corollary \ref {thmm:cv-struct-graph} holds for these times $t, s_1, \ldots, s_n$.

\subsection{Whole plane dynamics and its properties}
Let us first observe that the previous Markov chain on structure graphs was not time-homogeneous. It was defined for all $t<0$, on the time-horizon $[t,0]$ (i.e., for a time $|t|$) as follows. First sample the UST on the structure graph, and then open each edge $e$ of this UST independently, at a  uniformly chosen time $\tau(e)$ in $[t,0]$ independently.

However, it is trivial to turn this into a time-homogeneous Markov chain. One just needs to replace the uniformly chosen times in $[t,0]$ by (positive) exponential random variables 
$\xi(e)$ with mean $1$ (one exponential variable for each edge of the structure graph), i.e., we do the time change $\xi (e) = \log (t / \tau(e) )$. Then, the edge $e$ opens at time $\xi (e)$ and one collapses it to form a new structure graph. 
As we shall now try to point out, this homogeneous-time Markov chain for structure graphs set-up turns out to be particularly interesting in the whole-plane setting. 

 Let us summarize the construction of the cutting dynamics $(\G(t))_{t\le 0}$ in the plane. Sample a continuous UST in the entire plane, and just as in the finite-volume case, define a Poisson point process on its branches, with intensity $\ell \times \mu$ where $\ell$ is the Lebesgue measure on $(-\infty, 0]$ and $\mu$ is the $5/4$-dimensional Minkowski content measure. Then, for each $t < 0$, one can cut the UST on these marked points as before, which gives rise to a collection of trees $\G(t)$, and these trees are the limit when $\d \to 0$ of their discrete counterparts $\G^\d (t)$.

Note that the processes $(\G(t))_{t \le 0}$ and $(\St (t))_{t \le 0}$ are
 scale-invariant in the following sense.
 For each $\lambda > 0$, let us define $U_\lambda (\G (t)) $ to be the forest obtained from $\G (t)$ by magnifying space by a factor $\lambda$, and $U_\lambda (\St (t))$ be the structure graph of $U_\lambda (\G (t))$, or equivalently, the graph obtained from $\St (t)$ by multiplying the edge-weights by a factor $\lambda^{5/4}$.
Then, the process $( U_\lambda (\G (t)))_{t \le 0}$ is identical in distribution to the process $(\G(t/\lambda^{5/4}))_{t \le 0}$ (and the same goes for $(\St(t))_{t \le 0}$): one can check that on the one hand, the time $0$ distributions coincides by the scale-invariance of the whole-plane UST, and on the other hand, the cutting points in the dynamics are sampled in the same way in either case, with the rescaling of time exactly corresponding to the rescaling of the Minkowski content.

 Let us now define $\pi$ to be the distribution of $\St (-1)$. Theorem \ref {main1} then states exactly that the  process $(\St ( -e^{-u} ))_{u \ge 0}$ is obtained by letting the (time-homogeneous) Markov dynamics run 
 from $\St (-1)$. But by the scale-invariance property, we get that (modulo relabeling of the edges of the structure graph), the distribution $\pi$ is invariant under the time-homogeneous Markovian dynamic. 
 
Finally, we can also note that if we start from the graph $S_0=\Z^2$ with all edge-weights equal to $1$ (or any other regular planar lattice) and let the time-homogeneous Markov chain $(S_u)_{u \ge 0}$ run until a large time $U$, we discover each edge of the final UST on $\Z^2$ (independently) with probability $1- e^{-U}$ (or more exactly, rather than their edges, their ``traces on the structure graphs''). In particular, with Theorem \ref{main1}, this shows that (modulo relabeling of the edges of the structure 
graph, i.e., scaling down $\Z^2$ to $\d \Z^2$ for an appropriately chosen $\d$ depending on $U$), as $U \to \infty$, the law of the structure graph converges to $\pi$ (in the sense of Corollary \ref{thmm:cv-struct-graph}). 

Hence, this provides the following renormalization flow description of the UST scaling limit via (a rescaling of) the time-homogeneous Markov chain $P_u$ on the state of discrete weighted graphs: 

\begin {thmm}[Renormalization flow description] 
\label {main2}
The measure $\pi$ (that describes the previous scaling limit of near critical spanning forests) is invariant under the Markov chain. 
Furthermore, the (time-homogeneous) Markov chain started from any deterministic periodic two-dimensional transitive lattice and properly rescaled converges in distribution to $\pi$. 
\end {thmm}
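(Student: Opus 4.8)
The plan is to obtain Theorem \ref{main2} as a soft consequence of Theorem \ref{main1}, Corollary \ref{thmm:cv-struct-graph} and the scale-invariance of the whole-plane continuous UST; no new estimate is needed, only a careful bookkeeping of one time-change and one space-scaling. First I would record the structure of the time-homogeneous chain: fix the horizon $t=-1$, and recall that via the time-change $\xi(e)=\log(t/\tau(e))$ the uniform opening times $\tau(e)\in[t,0]$ become i.i.d.\ $\mathrm{Exp}(1)$ variables $\xi(e)$, so that the rule ``open the edge $e$ of the sampled weighted spanning forest at the new time $\xi(e)$ and collapse it'' is exactly the time-homogeneous Markov chain $(S_u)_{u\ge 0}$. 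Started from $S_0=\St(-1)$, this chain is, by construction, the process $(\tilde\St(s))_{s\in[-1,0]}$ of Theorem \ref{main1} re-read through $s=-e^{-u}$; hence Theorem \ref{main1} gives $S_u\overset{\mathrm d}{=}\St(-e^{-u})$ for every $u\ge 0$.

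Next I would put in the scaling. The law of the whole-plane continuous UST is scale-invariant, and the $5/4$-Minkowski content picks up a factor $\lambda^{5/4}$ under $z\mapsto\lambda z$, so, as recalled above, $(U_\lambda(\St(s)))_{s\le 0}\overset{\mathrm d}{=}(\St(s/\lambda^{5/4}))_{s\le 0}$. Specializing to $\lambda=e^{4u/5}$ (so $\lambda^{5/4}=e^{u}$ and $\lambda^{-5/4}=e^{-u}$) gives
\[
S_u \;\overset{\mathrm d}{=}\; \St(-e^{-u}) \;\overset{\mathrm d}{=}\; U_{e^{4u/5}}\!\big(\St(-1)\big),
\]
and, seen as an abstract weighted graph (forgetting the planar embedding), $U_{e^{4u/5}}(\St(-1))$ is just $\St(-1)$ with every edge-weight multiplied by the common factor $e^{u}$. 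So, started from $\pi$, the chain produces at time $u$ the law $\pi$ up to a global rescaling of all weights by $e^{u}$; equivalently, $\pi$ is invariant for the chain acting on weighted graphs modulo a common rescaling of their weights, which is the ``relabeling'' meant in the statement. Since the weights of $\St(-1)$ are a.s.\ finite Minkowski contents (Proposition \ref{lem:weightSt}), the weights along the chain do not blow up.

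Then I would treat the convergence from a lattice. Running the same time-homogeneous chain from $S_0=\Z^2$ with all weights $1$: sampling the weighted spanning forest on $\Z^2$ is just sampling the UST $\T(\Z^2)$; giving each of its edges an independent $\mathrm{Exp}(1)$ opening time and collapsing as $u$ grows, at time $U$ the clusters of $S_U$ are the connected components of the subtree of $\T(\Z^2)$ made of the edges of opening time $\le U$ (each edge of $\T(\Z^2)$ retained independently with probability $1-e^{-U}$), while the additive rule makes the weight of $S_U$ between two clusters equal to the number of edges of $\Z^2$ joining them. Comparing with $\St^\d(-1)$ on $\d\Z^2$ — there $\G^\d(-1)$ retains each edge of $\T(\d\Z^2)$ independently with probability $e^{-\d^{5/4}}$ and the weight between two clusters is $\d^{5/4}$ times the number of $\d\Z^2$-edges between them — and choosing $U=U(\d):=-\log(1-e^{-\d^{5/4}})$ (so that $U(\d)\to\infty$ and $\d^{5/4}\sim e^{-U(\d)}\to 0$ as $\d\to 0$), one gets $\d^{5/4}\cdot S_{U(\d)}\overset{\mathrm d}{=}\St^\d(-1)$. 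By Corollary \ref{thmm:cv-struct-graph} at $t=-1$, the right-hand side converges in distribution to $\St(-1)\sim\pi$ (in the sense of that corollary, i.e.\ the truncations $\St^\d_\e(-1)$ converge for all but countably many $\e$), which yields the convergence of the properly rescaled chain to $\pi$ as $U\to\infty$. For a general deterministic periodic two-dimensional transitive lattice $L$ the argument is verbatim the same once one feeds in the universality of the UST scaling limit (the UST on $L$ has the same continuous limit $\G(0)$, \cite{LSW_LERW}) and the analogue of Result \ref{res:convLERW} for $L$ (the renormalized LERW length converging to the natural parametrization of $\SLE_2$, with a lattice-dependent normalizing constant $c_L$), the constant $c_L$ being absorbed into the ``proper rescaling''.

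The main obstacle is \emph{not} in this argument: all the genuinely hard analysis sits upstream, in Theorem \ref{main1} and in the weight-convergence statements Proposition \ref{lem:weightSt} and Corollary \ref{thmm:cv-struct-graph}. What needs real care here is only the matching of parameters — verifying that the time-change $\xi(e)=\log(t/\tau(e))$ together with the rescaling of $\Z^2$ to $\d\Z^2$ line up so that the lattice chain at the large time $U(\d)$ is \emph{exactly} (in law), after multiplying the weights by $\d^{5/4}$, the near-critical structure graph $\St^\d(-1)$ — and, for lattices other than $\Z^2$, having the universal length-to-natural-parametrization input available in precisely the form needed.
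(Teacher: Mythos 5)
Your proposal is correct and follows essentially the same route as the paper: the time-change $\xi(e)=\log(t/\tau(e))$, the scale-invariance identity $(U_\lambda(\St(s)))_{s\le 0}\overset{\mathrm{d}}{=}(\St(s/\lambda^{5/4}))_{s\le 0}$ combined with Theorem~\ref{main1} to give invariance of $\pi$ modulo a common rescaling of weights, and then the identification of the large-time lattice chain with $\St^\d(-1)$ together with Corollary~\ref{thmm:cv-struct-graph} for the convergence. The paper leaves the parameter matching implicit (``for an appropriately chosen $\d$ depending on $U$''), whereas you make it explicit via $U(\d)=-\log(1-e^{-\d^{5/4}})$, so that $\d^{5/4}\cdot S_{U(\d)}\overset{\mathrm{d}}{=}\St^\d(-1)$ exactly; this is a welcome sharpening of the same argument, and your remark about needing the lattice-dependent analogue of Result~\ref{res:convLERW} for general periodic lattices is an honest flag of a point the paper also glosses over.
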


\section {Technical estimates and proofs} 
\label {lastsection}

\subsection {First comments about the structure graphs and their convergence}
Most of the remainder of this paper is now devoted to the proof of Proposition \ref{lem:weightSt}, which provides the convergence of the discrete structure graph weights to 
their continuous counterparts. 
In this section, we are working with the UST on the whole plane but the proofs can easily be extended to any bounded domain with $\C^1$ boundary.

Let us now make some comments about this, and explain how to deduce Proposition \ref{lem:weightSt} from two lemmas (Lemma \ref {finitelength} and Lemma \ref {prop_lengthdiscret}) that we will then prove in the 
subsequent section, based on more ``traditional'' arm-estimates and considerations for UST.

Suppose first that $z_0$ and $z_1$ are two given points. In both the discrete and continuous settings, these two points are joined by a unique path in the UST, which has a finite (renormalized) length (or Minkowski content --- by slight abuse of terminology, we will now use the word length also in the continuous case), so that the number of ``cuts'' on this branch (conditional on this length, and for a given $t$) follows a Poisson distribution. If these two points $z_0$ and $z_1$ end up in different trees at the end of the cutting procedure, then there has been a ``first cut'', i.e., an edge $e$ on this path that has been removed first (when one looks back from time $0$ to time $t$ in the cutting procedure), and its law (conditional on the branch between $z_0$ and $z_1$) is uniform on this branch with respect to length. Mind that the edge $e$ has a positive probability not to exist (if there was no cut on the branch). 

If we consider the entire UST and remove from it just this one edge $e$, then one has divided the UST into two trees, one containing $z_0$ and the other one containing $z_1$. As the graph dual to the whole-plane UST is also a UST, the intersection between the boundaries of the two trees containing respectively $z_0$ and $z_1$ is a cycle ${\mathcal C}^\d$, which consists of the edge $e^\star$ dual to $e$ together with the branch in the dual of the UST that joins the two extremities of $e^\star$. 
Clearly, if one removes more edges than just $e$, the trees that contain $z_0$ and $z_1$ respectively will shrink, and the intersection between the boundary of these two trees can only decrease. 
Hence, the interface between the two clusters of $\G^\d (t)$ that contain $z_0$ and $z_1$ is a subset of this cycle (and its length is bounded 
by that of ${\mathcal C}^\d$). The same situation occurs in the continuous case. Here, when one chooses a first point $z$ at random (according to Minkowski-content) on the UST branch joining 
$z_0$ and $z_1$, one can consider the cycle ${\mathcal C}$ in the dual tree that joins $z$ to itself, and when one removes more points according to the cutting dynamics, the clusters that contain the two points $z_0$ and $z_1$ will intersect along a subset of that cycle ${\mathcal C}$. 
\begin{figure}[ht!]
\centering
\includegraphics[width=7cm]{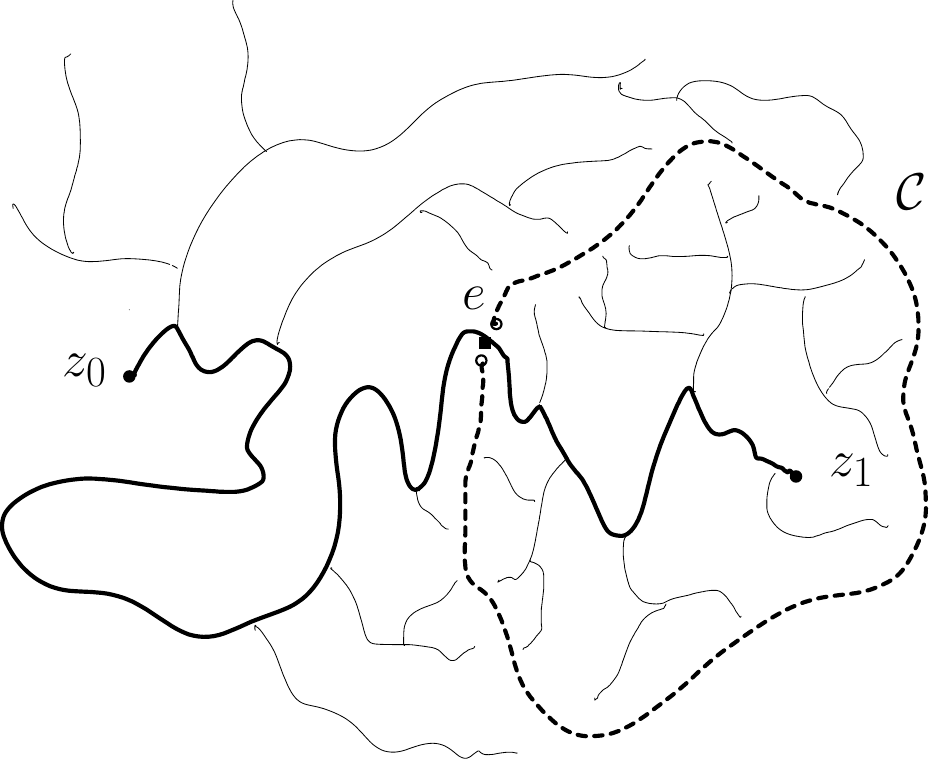}
\hskip 1cm
\includegraphics[width=7cm]{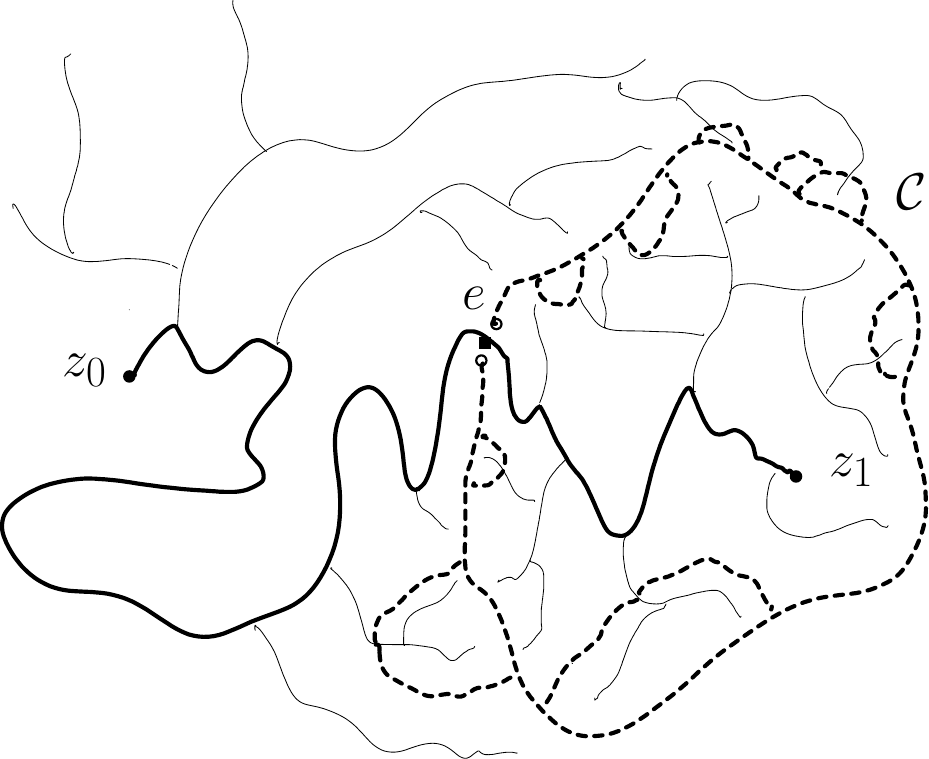}
\caption{Sketch of the tree, of the cycle $\mathcal C$ and the cuts}
\end{figure}

Let us first state a simple consequence of the convergence in distribution of ${\mathcal C}^\d$ combined with the convergence of the renormalized length 
on the branch from $z_0$ to $z_1$. In the following, $B(z,r)$ denotes the Euclidean ball of radius $r$ centered in $z$ when $z \in \CC$ and when $e$ is an edge of $\CC^\d$, $B(e,r)$ is the ball of radius $r$ centered at its midpoint.

\begin {lem}
\label {l1}
As $\eta_0 \to 0$, the probability that $\C^\d \not\subset B(0, 1/\eta_0)$ or $d ( z_0, e) < \eta_0$ or $d(z_1, e) < \eta_0 $ occurs goes to $0$ uniformly with respect to $\d$.  
\end {lem}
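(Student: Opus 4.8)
\textbf{Proof plan for Lemma \ref{l1}.}

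The plan is to treat the three bad events separately, using the convergence in distribution already established together with crude UST arm-type estimates, and then conclude by a union bound. First I would handle the event $\{\C^\d \not\subset B(0,1/\eta_0)\}$. Recall that $\C^\d$ is the cycle consisting of the dual edge $e^\star$ together with the dual branch joining its two endpoints in the dual tree; equivalently, $\C^\d$ is the interface between the two subtrees of $\T(\O^\d)$ obtained by removing the single edge $e$. Since $e$ is chosen at distance-weighted random on the branch $\gamma^\d_{z_0,z_1}$ of the UST between $z_0$ and $z_1$, and since that branch is (uniformly in $\d$) tight as a compact set (by Result \ref{res:convLERW} applied via Wilson's algorithm to the branch joining $z_0$ and $z_1$, or by Result \ref{th.finiteness.S2}), the edge $e$ itself stays in some fixed large ball $B(0,R)$ with probability close to $1$, uniformly in $\d$. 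Then I would observe that $\C^\d$ is a subset of the interface between the $z_0$-tree and the $z_1$-tree, which is in turn contained in the interface of the UST restricted to these two macroscopic clusters; using the duality between $\T(\O^\d)$ and its dual $\T(\O^\d)^\star$ and the tightness of $(\G^\d(0),\G^\d(0)^\star)$ as a pair in Schramm space (recalled in the ``Dual trees and boundary conditions'' paragraph), the dual branch between the two endpoints of $e^\star$ is a branch of the (tight) dual tree between two points close to $e$, hence stays in a fixed large ball with probability close to $1$ uniformly in $\d$. Letting $\eta_0\to 0$ (so $1/\eta_0\to\infty$) makes this probability tend to $1$.

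For the events $\{d(z_0,e)<\eta_0\}$ and $\{d(z_1,e)<\eta_0\}$, I would use that the law of $e$, conditionally on the branch $\gamma^\d_{z_0,z_1}$, is uniform with respect to renormalized length along that branch, together with Result \ref{res:convLERW}: the branch from $z_0$ to $z_1$ converges in natural parametrization (topology of uniform norm) to an $\SLE_2$-type branch of the continuous UST, whose Minkowski-content measure assigns zero mass to any fixed point. Concretely, the renormalized length of $\gamma^\d_{z_0,z_1}$ is tight and bounded below away from $0$ (the two points are macroscopically separated), while the renormalized length of the portion of $\gamma^\d_{z_0,z_1}$ inside $B(z_0,\eta_0)$ converges, as $\d\to 0$, to the content of the corresponding piece of the continuous branch near $z_0$, which tends to $0$ as $\eta_0\to 0$ since the continuous branch is a simple curve of finite content containing $z_0$ as an endpoint (a leaf). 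Hence the conditional probability that $e$ falls within $B(z_0,\eta_0)$ — a ratio of these two lengths — can be made arbitrarily small, uniformly in $\d$, by first choosing $\eta_0$ small and then $\d$ small; to make this genuinely uniform in $\d$ one passes through the joint convergence of the branch and its length measure (Proposition \ref{prop:lengthplanarUST}, resp. \ref{prop:lengthfreeUST}) and a tightness argument. The same argument applies verbatim near $z_1$.

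The main obstacle I anticipate is the uniformity in $\d$ of the second estimate: one must rule out the possibility that, although the continuous limiting branch has no atom at $z_0$, the discrete LERW could spend an anomalously large (renormalized) amount of time near its endpoint for infinitely many meshes $\d$. This is exactly the kind of control that Result \ref{res:convLERW} (convergence of LERW in its natural parametrization, for the uniform topology, built on \cite{LV}) is designed to provide: convergence of the curve \emph{together with its time-parametrization} forbids such mass escaping to the endpoint in the limit, and tightness upgrades this to uniformity in $\d$. So the structure of the argument is: (i) reduce all three events to statements about the branch $\gamma^\d_{z_0,z_1}$, its length measure, and the tight pair $(\G^\d(0),\G^\d(0)^\star)$; (ii) invoke Result \ref{res:convLERW}, Proposition \ref{prop:lengthplanarUST}/\ref{prop:lengthfreeUST}, and the tightness of the UST and its dual to get, for each fixed $\eta_0$, convergence as $\d\to 0$, and then uniformity in $\d$ by a standard tightness/subsequence argument; (iii) take $\eta_0\to 0$ and conclude with a union bound over the three events.
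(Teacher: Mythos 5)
Your proposal is correct and is in essence the same argument that the paper sketches in the one sentence preceding the lemma, namely that the statement "follows from the convergence in distribution of $\C^\d$ and from the convergence of the renormalized measure on the branch from $z_0$ to $z_1$." You have merely unpacked the tightness bookkeeping (for the primal branch via Results \ref{res:convLERW} and \ref{th.finiteness.S2} / Propositions \ref{prop:lengthplanarUST}--\ref{prop:lengthfreeUST}, and for the dual cycle via the joint Schramm-space tightness of $(\G^\d(0),\G^\d(0)^\star)$) together with the union bound that the paper leaves implicit, and the plan is sound.
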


\begin{proof}Consider $\d_k \to 0$. 
By contradiction, if for all $\eta_0$, $\C^{\d_k} \not\subset B(0, 1/\eta_0)$ occurs with uniformly positive probability for infinitely many $\delta_k$, then this  
would readily imply that the continuous whole-plane UST is disconnected, whereas $d ( z_0, e) < \eta_0$ (or $d(z_1, e) < \eta_0 $) occurring with uniformly positive probability
for infinitely many $\d_k$ would contradict the finiteness of the Minkowski content of the branch from $z_0$ to $z_1$ in the continuous whole-plane UST.
\end{proof}

We know already that the lengths of branches in the dual tree that join prescribed given points do converge to their continuous counterparts, but
care will be needed when we want to show the convergence of the length of the entire cycle ${\mathcal C}^\d$, because it does originate at a special point, i.e., 
a point on the backbone of the original UST, so we need to exclude the scenario where something weird happens to the length of ${\mathcal C}^\d$ in the vicinity of this special point. 
This is the purpose of the next lemma: 

\begin{lem}\label{finitelength}
Let us fix $\eta_0, z_0$ and $z_1$, and condition on the event that $\C^\d$ exists, and that the three events in Lemma \ref {l1} do not occur (note that this is a conditioning on 
an event of positive probability, bounded from below independently of $\d$, and that then, the diameter of ${\mathcal C}^\d$ is bounded from below by $\eta_0$). 
As $\eta$ goes to $0$, in the previous setting (for fixed $z_0$ and $z_1$), the expected (conditional) renormalized length  $u^\d (\eta)$ of the 
 intersection of ${\mathcal C}^\d$ with the ball of radius $\eta$ around the center of $e^\star$, does tend to $0$ uniformly with respect to $\d$. 
\end{lem}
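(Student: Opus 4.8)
The plan is to decompose the cycle $\mathcal{C}^\d$ into its two ``legs'' emanating from the dual edge $e^\star$ and to estimate the renormalized length of each leg inside a small ball $B(e, \eta)$ around the midpoint of $e$ separately. Recall that $\mathcal{C}^\d$ consists of $e^\star$ together with the dual-tree branch joining the two extremities of $e^\star$. Fix one extremity $v^\star$ of $e^\star$; the portion of $\mathcal{C}^\d$ we need to control is the initial segment of the dual branch started at $v^\star$, run until it first exits $B(e,\eta)$. The key point is that this dual branch is, by Wilson's algorithm applied to the dual graph, (a sub-path of) a loop-erased random walk; but the subtlety flagged in the statement is that its starting point $v^\star$ is not a generic point of the dual lattice — it sits on the dual of the edge $e$, which is itself the ``first cut'', i.e. a point sampled according to (renormalized) length measure on the backbone branch of the primal UST between $z_0$ and $z_1$.

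First I would argue that, conditionally on everything determined up to and including the choice of the primal branch $[z_0,z_1]$ and the first cut $e$ (and on the positive-probability event in the statement), the edge $e$ is, locally near its midpoint, distributed in a way that is \emph{absolutely continuous with bounded density} with respect to counting/Lebesgue measure on the relevant scale — this is precisely because conditionally on the branch and its renormalized length, $e$ is chosen uniformly with respect to length along that branch, and by Result \ref{res:convLERW} this length measure converges to the Minkowski content along an SLE$_2$ curve, which on macroscopic portions away from $z_0,z_1$ (guaranteed by the conditioning via Lemma \ref{l1}) has finite total mass bounded away from $0$ and $\infty$. Hence it suffices to bound, for a \emph{typical} (say, deterministic and then averaged) location $w$ of the midpoint of $e$ at positive distance from $z_0, z_1$, the expected renormalized length of an LERW started near $w$ up to its first exit of $B(w,\eta)$.

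For that local estimate I would invoke Kenyon's / Masson's / Beneš--Lawler--Viklund estimates on the expected length of LERW (already cited in \S\ref{sec:USTandUSTlimits}): the expected renormalized length of an LERW run inside a ball of radius $\eta$, started near its center, is of order $\eta^{5/4}$ (up to $o(1)$ in the exponent, or sharper), uniformly in the mesh $\d$. Since $\eta^{5/4}\to 0$ as $\eta\to 0$, summing the contributions of the two legs and integrating against the bounded density for the location of $e$ yields $u^\d(\eta)\to 0$ uniformly in $\d$. One technical wrinkle is that the dual branch started at $v^\star$ is only a piece of an LERW — it is the LERW from $v^\star$ run until it hits the rest of the dual tree — but since we stop it even earlier, at the exit of $B(e,\eta)$, the expected-length bound for the full (unstopped, or wired-to-the-rest-of-the-tree) LERW dominates what we need, so the subtlety goes the right way. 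A second point to check is the conditioning on ``$\mathcal{C}^\d$ exists'': since this event has probability bounded below uniformly in $\d$ (as stated), conditioning inflates expectations by at most a bounded factor, which does not affect the $\eta\to 0$ limit.

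The main obstacle I anticipate is making rigorous the claim that the law of $e$ near its midpoint has a density bounded uniformly in $\d$ with respect to the reference measure at scale $\eta$ — that is, ruling out that the first-cut edge $e$ concentrates (on the scale $\eta$) near some exceptional location where LERW lengths could behave badly. The clean way around this is to avoid pointwise densities altogether: condition on the realization of the primal branch $\gamma^\d=[z_0,z_1]$ together with its renormalized length measure $\mu^\d$, note that $e$ is then uniform with respect to $\mu^\d$, and write $u^\d(\eta) = \mathbb{E}\big[ \mu^\d(\gamma^\d)^{-1}\!\int_{\gamma^\d} g^\d_\eta(x)\,\mu^\d(dx)\big]$ where $g^\d_\eta(x)$ is the conditional expected renormalized length of the two dual legs inside $B(x,\eta)$ given that the cut is at $x$. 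By the LERW length estimates, $g^\d_\eta(x)\le C\eta^{5/4}$ for all $x$ at distance $\ge \eta_0$ from $z_0,z_1$, uniformly in $x$ and $\d$; on the conditioning event of Lemma \ref{l1} the whole branch lies at such distance, and $\mu^\d(\gamma^\d)$ is bounded below, so the bracketed quantity is $\le C'\eta^{5/4}$ deterministically, giving the uniform conclusion without ever needing a density bound.
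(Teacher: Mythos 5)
Your plan --- condition on the primal branch $\gamma^\d$ from $z_0$ to $z_1$, use that the cut $e$ is then uniform with respect to $\mu^\d$, and bound the conditional expected length $g^\d_\eta(x)$ of the two dual legs in $B(x,\eta)$ --- is a sensible decomposition, and passing to the length measure $\mu^\d$ rather than a pointwise density is indeed the right move. The real gap is the claim $g^\d_\eta(x)\le C\eta^{5/4}$, which you attribute to the Kenyon/Masson/Bene\v{s}--Lawler--Viklund length estimates. Those are bounds for a loop-erased walk in its \emph{unconditioned} setting. Here $g^\d_\eta(x)$ is the expected length of the dual branch emanating from $e^\star$, \emph{conditionally on the primal branch from $z_0$ to $z_1$ passing through $e$} (indeed, conditionally on the full shape of $\gamma^\d$). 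That conditioning is not innocent: the two dual legs together with the two primal arms (towards $z_0$ and $z_1$) form exactly a four-arm configuration around $e$, so fixing the primal arms changes the law of the dual arms in a way governed by the four-arm exponent, not by a bulk LERW length or Green's-function estimate. Your remark that ``stopping earlier goes the right way'' does not resolve this, because stopping is not the issue --- conditioning on the primal branch is --- and there is no monotonicity guaranteeing it helps. Making the bound $g^\d_\eta(x)\le C\eta^{5/4}$ rigorous, uniformly over the realizations of $\gamma^\d$ that you must handle, would in effect require the uniform-in-boundary-conditions four-arm estimate of Lemma~\ref{ALN} anyway.

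The paper avoids conditional expectations altogether and reuses the template from the proof of Lemma~\ref{prop_lengthdiscret}: it bounds $u^\d(\eta)$ by $\d^{5/4}$ times the sum, over pairs of a primal edge $e_0$ and a dual edge $e_1$ with $r:=d(e_0,e_1)\le\eta$, of the \emph{unconditional} probability that $e_0$ is $\eta_0$-inside the branch and removed by the cutting procedure while $e_1$ lies on the dual cycle around $e_0$. This joint event forces $\mathcal{A}_{e_0}(\d/2,r/3)\cap\mathcal{A}_{e_1}(\d/2,r/3)\cap\mathcal{A}_{e_0}(2r,\eta_0)$, and Lemma~\ref{ALN} (whose bound is uniform in domain and boundary conditions, which is exactly what lets one multiply arm probabilities over disjoint annuli) together with the same summation as in Lemma~\ref{prop_lengthdiscret} gives a bound of order $C(\eta_0)\,\eta^{5/4}$, uniformly in $\d$. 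The first-moment union bound is precisely what sidesteps the conditional LERW estimate your approach needs but does not supply.
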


Next, one can make the following observations (which can be made rigorous, but they serve here as a motivation and will not be used later, so we will not bother to do so). Suppose that in the previous scenario, one considers the continuous tree containing $z_1$ after cutting away just $e$, and that this tree is bounded (if we were in the whole plane, this means that $z_1$ was on the bounded side of the cut $e$).
Lemma \ref {finitelength} indicates that the length of $\mathcal C$ (in terms of Minkowski content) is finite. 
However, we need to understand something finer, namely what the common boundary of the sub-trees containing $z_0$ and $z_1$ looks like at time $t$ of the cutting procedure, when one has removed from $\G^\d(0)$ many more edges than just $e$. One can notice that for a ``typical point'' on the cycle $\mathcal C$, a similar argument will show that the (Minkowski-content) length between this point $z$ and $z_1$ in the initial tree is finite. 
Hence, this point will have a positive probability to be cut off from $z_1$, but it also has a positive probability not to be cut off. Hence, the expected portion of the length of the part of $\mathcal C$ that will remain on the 
outer boundary of the cluster containing $z_1$ is in fact positive.  
On the other hand, a back-of-the envelope calculation (that we do not reproduce here) suggests that the total length of the tree consisting of all the branches that join $z_1$ to all the boundary points in $\mathcal C$ is infinite. This means that an infinite number of portions of $\mathcal C$ will be cut out. In other words, the situation is that one starts with $\mathcal C$ and cuts off infinitely many connected arcs from it, and these arcs will 
be dense on $\mathcal C$, but the total length of the remaining set can still be positive. 

The purpose of the following lemma is now to 
control this feature at the discrete level:
Let us say that a point $z$ of ${\mathcal C}^\d$ is cut-out from this boundary at a scale smaller than $\e$ if there exists a cut disconnecting $z$
from one of the two extremities of the special edge $e$, in such a way that the part of the tree disconnected from $e$ by this cut has a diameter smaller than $\e$.
For each $\eta >0$, we are going to define $L^\d (\eps)$ to be the renormalized length of the set of points on ${\mathcal C}^\d \cap ( B(0, 1/ \eta) \setminus B(e, \eta))$ that are cut-out from the interface ${\mathcal C}^\d$ at a scale smaller than $\e$:

\begin{lem}\label{prop_lengthdiscret}
As $\e$ goes to $0$, in the previous setting (for fixed $z_0$, $z_1$ and $\eta$), the expected value of $L^\d (\e)$ tends to $0$ uniformly with respect to $\d$.
\end{lem}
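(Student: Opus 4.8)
The strategy is to bound the expected renormalized length $L^\d$ by summing, over a dyadic family of scales and over a covering of $B(0,1/\eta)\setminus B(e,\eta)$ by small balls, the expected contribution of each ball to the ``cut-out at scale $<\e$'' set, and then to extract the key geometric input: for a point $z$ of $\mathcal C^\d$ to be cut out at a scale smaller than $\e$, there must exist a cut edge $e'$ on a dual branch near $z$ such that a whole sub-tree of diameter $\le \e$ gets disconnected, and such a configuration forces the existence of a macroscopic interface loop (the cycle whose length is small) inside a small ball, which is a rare UST event. Concretely, I would first condition on $\mathcal C^\d$ (i.e. on the tree and the first cut $e$), so that $\mathcal C^\d$ is a fixed dual cycle; then the cuts on the portions of the primal tree hanging off $\mathcal C^\d$ are an independent Poisson process with intensity proportional to the renormalized length measure. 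For a fixed $z\in\mathcal C^\d$, the probability that $z$ is cut out at scale $<\e$ is at most the probability that there is at least one cut on the part of the primal tree of diameter $\le \e$ separating $z$ from $\mathcal C^\d$, which by the Poisson property is bounded by $|t|$ times the expected renormalized length of primal branches that both start near $z$ and stay within distance $\e$ of $z$.

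The second step is therefore an \emph{a priori} estimate on the expected renormalized length of the piece of the UST (primal or dual) contained in a small ball: by the one-point / arm estimates for UST (à la Kenyon, Masson, Beneš--Lawler--Viklund) and the fact that the renormalized length scales like $\delta^{5/4}$ times the number of edges and that the expected number of LERW/UST steps inside a ball of radius $r$ centered at a given point is $O(r^{5/4+o(1)})$ uniformly in $\d$, one gets that the expected renormalized length of $\T^\d \cap B(z,\e)$ is $O(\e^{5/4 - \text{o}(1)})$, uniformly in $\d$ (for $\d$ small with respect to $\e$), and similarly for the dual tree. Summing over a covering of $B(0,1/\eta)\setminus B(e,\eta)$ by $O(\eta^{-2}\e^{-2})$ balls of radius $\e$, and using that each contributes at most $\e^2 \times$ (Lebesgue density of $\mathcal C^\d$) weighted by the cut probability $O(|t|\,\e^{5/4-\text{o}(1)})$, we would get $\E[L^\d] = O\big(\eta^{-2}\,|t|\, \e^{1/4 - \text{o}(1)}\big)$, which tends to $0$ as $\e \to 0$, uniformly in $\d$. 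The role of the cut-offs $\eta$ and $1/\eta$ is exactly to keep us away from the special backbone point (where Lemma~\ref{finitelength} is needed) and from infinity, so that all arm-exponent estimates apply with constants uniform in $\d$.

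The main obstacle will be making the ``cut out at scale smaller than $\e$ implies a short interface loop in a small ball'' implication precise and showing that the relevant event has the claimed probability \emph{uniformly in $\d$}: one must carefully organize a union bound over all possible locations of the separating cut and over dyadic diameter scales $2^{-k}\le\e$ of the disconnected sub-tree, so that the geometric series in $k$ converges; and one must invoke the correct uniform-in-$\d$ quasi-multiplicativity and arm estimates for the UST and its dual (these are exactly the ``more traditional arm-estimates'' advertised before the lemma). A secondary technical point is that $\mathcal C^\d$ is not an arbitrary dual branch but one emanating from a point on the primal backbone, so to control lengths near that point one uses Lemma~\ref{finitelength} to discard that neighborhood, and away from it the cycle is just a concatenation of ordinary dual branches to which the standard length estimates of Subsection~\ref{subsec:Schramm} apply. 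Once these uniform arm-estimates are in place, the summation over scales and the covering argument are routine, and letting $\e\to0$ (with $\eta$ and $z_0,z_1$ fixed) gives the result.
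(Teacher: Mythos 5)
Your high-level plan --- reduce the question to UST arm-event estimates, keep the bounds uniform in $\d$, and use the cut-offs $\eta$, $1/\eta$ to stay away from the special backbone point and from infinity --- is the right one, and it matches the paper's strategy in spirit. But the step you yourself flag as ``the main obstacle'' is exactly where a genuine gap sits, and your proposed workaround (condition on $\C^\d$, treat cuts as Poisson, and then bound the cut probability by the expected renormalized primal length near $z$) pushes the difficulty into a \emph{conditional} expectation that is not easy to control: you would need a uniform $O(\e^{5/4-o(1)})$ bound on the expected renormalized primal length in $B(z,\e)$ \emph{given} that $z$ lies on the dual interface $\C^\d$, and the strong primal/dual correlations near such a point make that claim nontrivial --- indeed, when one expands that conditional Poisson bound one lands back on a sum over pairs of edges weighted by the probability of a joint primal/dual event, so nothing has been decoupled.

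The paper's proof avoids the conditioning altogether and goes directly to that pair sum: $\E[L^\d]$ is bounded by $\d^{5/4}$ times the sum over pairs of edges $(e_0,e_1)$ with $r:=d(e_0,e_1)\le\e$ of $\P[E(e_0,e_1)]$, where $E(e_0,e_1)$ is the event that $e_0$ is on the interface, $e_1$ is on a nearby primal branch whose removal detaches $e_0$, and $e_1$ is cut (this last factor contributing an extra $C(t)\d^{5/4}$). The key observation you do not reach is the containment $E(e_0,e_1)\subset \mathcal{A}_{e_0}(\d/2,r/3)\cap\mathcal{A}_{e_1}(\d/2,r/3)\cap\mathcal{A}_{e_0}(2r,\eta)$, together with the fact that Lemma~\ref{ALN} is \emph{uniform over domains and boundary conditions}: one can condition on the outer arm event and then bound the two inner ones with no quasi-multiplicativity input, giving the clean product $(r/\d)^{-3/4}\cdot(r/\d)^{-3/4}\cdot(\eta/r)^{-3/4}$. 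Summing over $e_0$ and $e_1$ then yields $\E[L^\d]\le C(\eta)\e^{5/4}$, which is uniform in $\d$ and sharper than the $\e^{1/4-o(1)}$ your covering-by-balls/dyadic bookkeeping would produce. In short, your intuition about arm estimates is correct, but the precise three-arm decomposition, and the use of the boundary-condition-uniform bound to decouple those arm events, is the missing ingredient that replaces both the covering argument and the delicate conditional expectation.
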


\begin{figure}[ht!]
\centering
\includegraphics[width=7cm]{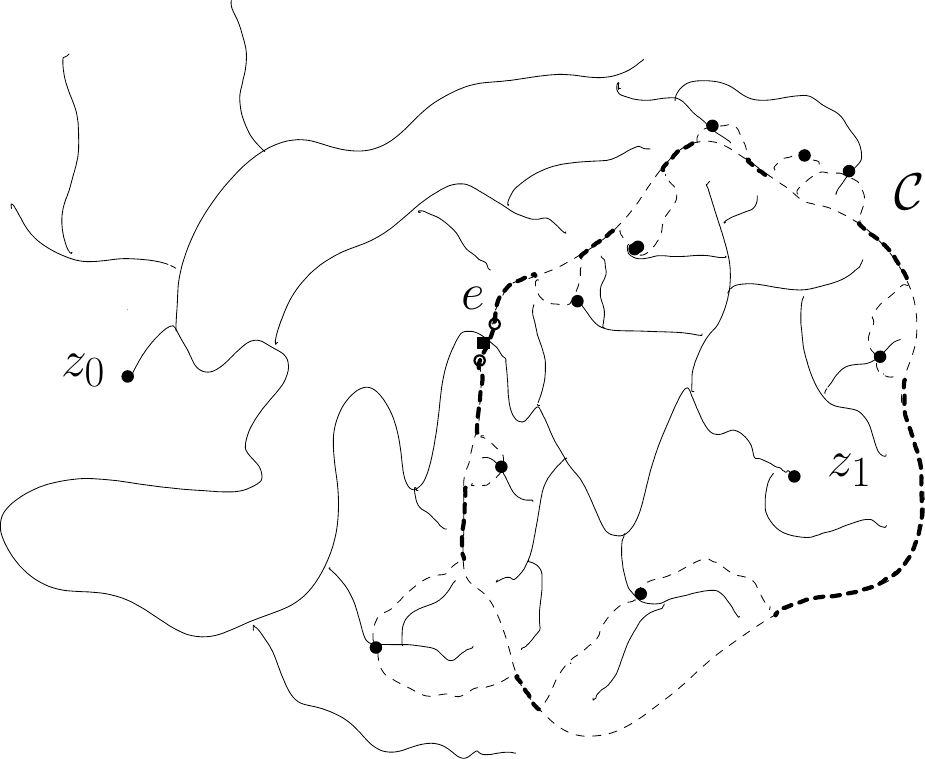}
\caption{After all the cuts: the remaining interface between the trees containing $z_0$ and $z_1$}
\end{figure}

We shall prove Lemmas \ref{finitelength} and \ref{prop_lengthdiscret} in the next section, but let us already explain now how Proposition \ref{lem:weightSt} follows from them: 

\begin {proof}[Proof of Proposition \ref {lem:weightSt}]
Consider points $z_0,z_1$ as well as a sequence of mesh sizes $\delta_k \to 0$.  
We can sample the graphs $\G^{\d_k}(t)$ for all $k$, together with $\G(t)$ on the same probability space, in such a way that the tree containing (a $\d_k$-approximation of) $z_0$ (resp. $z_1$) in $\G^{\d_k}(t)$ converges almost surely to the tree containing $z_0$ (resp. $z_1$) in $\G(t)$. We can furthermore choose our setup so that the dual tree at time $0$ converges almost surely, in the sense that the renormalized lengths of branches of its finite subtrees do (Corollary \ref{prop:joint}). We will spend the remainder of this proof showing that $l^{\d_k}(z_0,z_1)$ converges in probability to $l(z_0,z_1)$ as $k \to \infty$.
More precisely, we need to show (for each $\eta_0$) this convergence on the event described in Lemma \ref{l1}, i.e., when the cycle separating $z_0$ from $z_1$ is not very large, and when the cut edge $e$ is neither very 
close to $z_0$ nor very close to $z_1$. 

Let us first note that for the coupling of the trees and of the cutting processes introduced in the proof of Proposition \ref{propo:CV_cutoff_process}, the cycles ${\mathcal C}^{\d_k}$ are with very high probability 
very close to ${\mathcal C}$. We can therefore actually choose such a coupling and assume that almost surely, ${\mathcal C}^{\d_k}$ do converge to ${\mathcal C}$ as planar curves. 

The next step is to prove convergence of the lengths of these cycles, which is where Lemma \ref{finitelength} is crucial. It ensures that the lengths of the two portions
of ${\mathcal C}^{\d_k}$ near the special edge $e$ (from $e$ to the  circle of radius $\eta$ around the center of $e$) tends to $0$ uniformly in $\delta$, when $\eta \to 0$. 
On the other hand, Schramm's strong approximation result (Result \ref{th.finiteness.S2}) for the dual tree, together with the strong convergence of finite subtrees of the dual tree (as shown in the appendix) shows that the bulk lengths (i.e. that the lengths of the portions of ${\mathcal C}^{\d_k}$ obtained by removing its portions near the special edge $e$) do converge (indeed, the strong approximation result implies with a probability as close to one as one wishes,  
all the pieces of the dual tree that are not near to its leaves will be contained in some finite subtree with prescribed endpoints (one chooses enough of these endpoints deterministically so that the probability gets close to $1$), and we know that the convergence of the length-parametrization for this finite subtree holds). 

We now need to control the length of the discrete approximations of $l(z_0, z_1)$. 
We now choose $\e$, and denote by $l_\e^\d$ (resp. $l_\e$)  the renormalized length of the set of points in $\C^{\d}$ (resp. $\C$)  that have not been disconnected from $z_0$ or $z_1$ at a scale larger than $\e$ (i.e., by a cut creating a cycle of diameter larger than $\e$). In other words, we remove from the total length of $\C^{\d}$ (resp. $\C$) the contribution of all the macroscopic cuts (of diameter larger than $\e$). 

By definition of $l(z_0, z_1)$, we know that $l_\e \to l(z_0, z_1)$ almost surely as $\e \to 0$. Moreover, Lemma~\ref {prop_lengthdiscret} ensures  that 
$\E(l_{\e}^{\d_k} - l^{\d_k} (z_0, z_1)) $ goes to $0$ as $\e \to 0$, uniformly in $k$. As a consequence, we get that for all $r>0$, one can find $\e_0$ such that for all $\eps \le \eps_0$, there exists $k_0$ so that for all 
$k \ge k_0$, all the probabilities 
 $\P(|l_{\e} - l(z_0, z_1)|>r)$, $\P(|l_{2{\e}}^{\d_k} - l^{\d_k} (z_0, z_1)|>r)$  and $\P(|l_{{\e}/2}^{\d_k} - l^{\d_k} (z_0, z_1)|>r)$ are smaller than $r/10$.
We then choose such an $\e_0$ and $k_0= k_0 (\e_0)$. 
We know that the finitely many pieces of $\C^\d$ cut by cycles of diameter larger than ${\e_0}$ do converge almost surely to their continuous counterpart (for the same reason that the curve $\C^\d$ converges to $\C$). In particular, this shows that one can find $k_1 \ge k_0$ so that for all $k \ge k_1$, the probability that 
\[
   l_{2 {\e_0}}^{\d_k} + (r/2) \ge l_{\e_0} \ge  l_{{\e_0}/2}^{\d_k} - (r/2)
\]
is greater than $1 - r/10$. 
Hence, for $k \ge k_1$, with probability at least $1- r /2$, the four quantities $l_{2{\e_0}}^{\d_k}$, $l_{{\e_0}/2}^{\d_k}$, $l^{\d_k} (z_0, z_1)$ and $l_{\e_0}$ are no more than $3r$ apart.

Wrapping up, we see that for any fixed $r$, one can find $k_1$, so that for all $k \ge k_1$, 
$$\P(|l^{\d_k} (z_0, z_1)-l(z_0, z_1)|>4r) \le r.$$ 
In other words, $l^{\d_k} (z_0, z_1)$ converges in probability to $l (z_0, z_1)$ as $k\to \infty$.
\end {proof}

\subsection {Arm events in UST} 

Let us first recall an estimate about LERW of the type that is essential in the derivation of results involving the Minkowski-content in \cite {AKM,BVL,LV}.
Let $X$ and $Y$ be two independent simple random walks on $\Z^2$ starting at $x$ and $0$ respectively and stopped at their first exit time $\tau_X$ and $\tau_Y$ of the ball of radius $N$ around the origin. 
Let us consider the loop erasure $\hat Y$ of $Y$.
Take $L < N$ and denote by $\hat Y^L$ the subpath of $\hat Y$ from its last hitting time of the ball of radius $L$ around the origin until its end $\tau_Y$, and define the escape probability to be
\[
\Es (L,N) := \P_{x=0} ( X \cap \hat Y^L = \emptyset ).
\]
\begin{res}\label{ELN2}
There exists a constant $C >0$ such that for all $L$ and $N$ with $L \leq N/2$,
 \begin{align*}
C^{-1} (L/N)^{3/4} \leq  \Es (L,N)   \leq  C (L/N)^{3/4}
 \end{align*}
\end{res}

\begin {proof}
When $L=1$, the estimate can be derived following the proof of \cite[Corollary 3.15]{Barlow}, using the better estimate of \cite[Theorem 1.1]{BVL} as an input.
Moreover, one can compare $\Es(L,N)$ to $\Es(1,N)/\Es(1,L)$ thanks to \cite[Propositions 5.2 and 5.3]{Masson}, which proves Result \ref{ELN2}.
\end{proof}

Note that this implies that the probabilities, say, $\Es (L,N)$ and $\Es (5L, N)$ are comparable.

Moreover, the case  $L=1$ provides (via Wilson's algorithm) the probability that two distinct branches in the wired UST in $B(N)$ that start at the origin and  next to the origin stay disjoint until they hit the circle of radius $N$. By duality, this is also (almost, as there is the issue of the $(1/2, 1/2)$ translation) the probability that for the free UST in $B(N)$, there exists a branch from the boundary $\partial B(N)$ to itself that goes through a fixed edge neighboring the origin.

\medbreak

An event related to the previous non-intersection events, but slightly different 
is the following arms event $\mathcal{A} (L,N)$ around the origin between scales $L$ and $N$ that there exists four disjoint branches, two of the UST and two of the dual UST 
(in alternating order)
that connect  $\partial B (L)$ to $\partial B (N)$ (see Fig. \ref{armsannulus}). 
\begin{figure}[!ht]
\centering
\includegraphics[width=7cm]{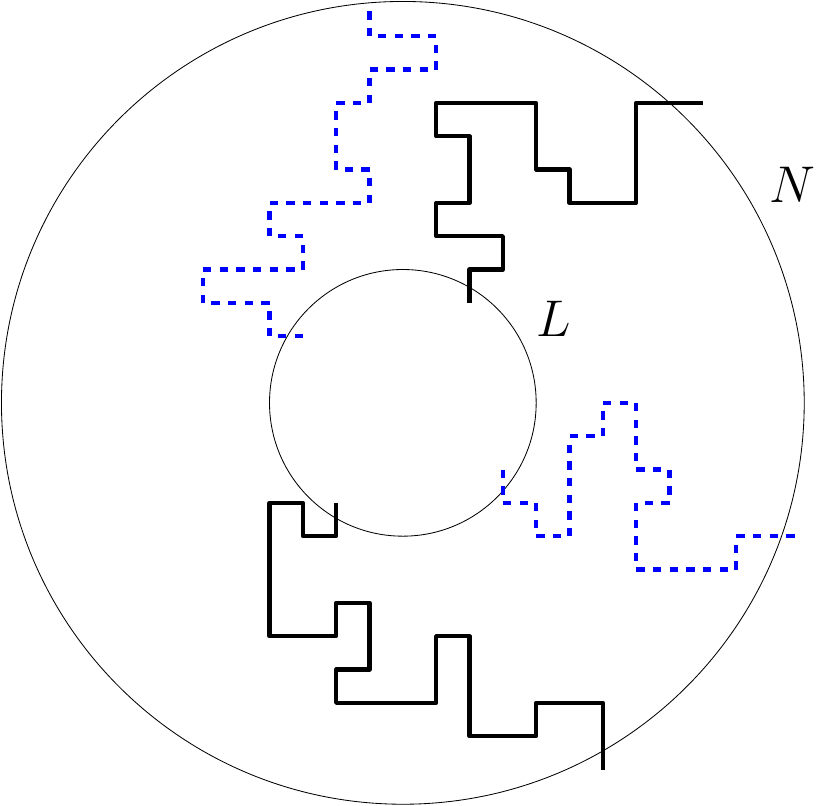}
\caption{The four alternating disjoint branches in the UST.}\label{armsannulus}
\end{figure}

\begin{lem}\label{ALN}
Consider the UST in a discrete domain $\Omega \subseteq \Z^2$ containing $B(N)$, with arbitrary boundary conditions. There exists a constant $C >0$ independent of $\Omega$, $N$  and the boundary conditions, such that for all $L \leq N$,
\begin{align*}
\P(\mathcal{A} (L,N)) \leq C (L/N)^{3/4}.
\end{align*}
\end{lem}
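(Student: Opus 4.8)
The plan is to reduce the arm event $\mathcal{A}(L,N)$ to the non-intersection events $\Es(\cdot,\cdot)$ already controlled by Result \ref{ELN2}, using Wilson's algorithm together with planar duality. First I would recall that, by the standard duality between a spanning tree and its dual, the existence of two disjoint UST branches together with two disjoint dual UST branches in alternating cyclic order around the origin is essentially equivalent (up to the harmless $(1/2,1/2)$-shift) to a single event about the \emph{primal} tree: namely that there are two disjoint branches of the UST crossing the annulus $B(N)\setminus B(L)$ that are moreover separated on both sides, which is what forces the two dual crossings to exist in between. So the four-arm event reduces to a ``polychromatic'' two-arm event for the primal tree, with the dual arms playing the role of forbidding the two primal arms from merging inside the annulus.

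Next I would run Wilson's algorithm rooted on $\partial \Omega$ (or on $\partial B(N)$ after first exploring the outside), starting the first loop-erased branch from a vertex near $\partial B(L)$: the branch $\hat Y$ is the loop-erasure of a simple random walk $Y$, and its trace beyond scale $L$, $\hat Y^L$, is a crossing of the annulus. To have a \emph{second} disjoint crossing one then starts an independent random walk $X$ from another vertex near $\partial B(L)$; the event that the loop-erasure of $X$ stays disjoint from $\hat Y^L$ until reaching $\partial B(N)$ is contained in the event $X\cap \hat Y^L=\emptyset$, whose probability is exactly (comparable to) $\Es(L,N)$. Using the absolutely-continuity/starting-point comparison from \eqref{eq:es}, one gets that the conditional probability, given $\hat Y$, of producing a second disjoint arm is $\le c\,\Es(L,N)\le C(L/N)^{3/4}$, uniformly over $\Omega$ and the boundary conditions (the uniformity comes because the estimate in Result \ref{ELN2} is a statement about random walks in the ball $B(N)$, insensitive to what the graph looks like outside). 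Since $\mathcal{A}(L,N)$ is contained in this two-disjoint-crossings event, this yields $\P(\mathcal{A}(L,N))\le C(L/N)^{3/4}$.

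The main obstacle I expect is making the duality reduction genuinely rigorous rather than heuristic: one has to argue carefully that the four alternating arms really do force (and are essentially forced by) a configuration of the primal tree in which a single random-walk non-intersection event controls the probability, and to check that conditioning on the first branch $\hat Y^L$ and then invoking Wilson's algorithm for the next branch is legitimate with arbitrary (possibly wired) boundary conditions on $\partial\Omega$. A clean way around the delicate bookkeeping is to not insist on the exact translation but simply to \emph{bound} $\mathcal{A}(L,N)$ from above by the event that there exist two disjoint UST branches crossing $B(N/2)\setminus B(2L)$ (any two of the four arms of the same color, or one of each color together with the fact that a dual arm separates two primal pieces of the tree): this only loses a constant factor by Result \ref{ELN2}'s remark that $\Es(L,N)$, $\Es(5L,N)$, $\Es(L,N/2)$ are all comparable. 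Then the proof is exactly the Wilson's-algorithm-plus-\eqref{eq:es} argument above, and the uniformity in $\Omega$ and boundary conditions is automatic. I would also note that a matching lower bound is not needed here, which simplifies matters considerably since one does not need a second-moment / quasi-multiplicativity argument, only the first-moment upper bound.
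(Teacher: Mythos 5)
Your overall plan — reduce the four–arm event to random–walk non-intersection via duality and Wilson's algorithm, and then invoke Result~\ref{ELN2} together with the starting–point comparison of~\eqref{eq:es} — is the same strategy the paper uses. However there is a genuine gap in the middle of your argument that the paper has to work around with an extra input, and which your proposal does not address.

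The problem is the reduction from ``there exist two disjoint UST arms crossing the annulus'' to ``a specific pair of Wilson branches is disjoint''. You propose to run Wilson's algorithm from ``a vertex near $\partial B(L)$'' to get $\hat Y$, and then from ``another vertex near $\partial B(L)$'' to get the second branch. But the event $\mathcal{A}(L,N)$ does not tell you \emph{which} vertices to start from. There are of order $L/\delta$ candidate starting vertices on $\partial B(L)$, and a naive union bound over them multiplies $\Es(L,N)$ by a factor that blows up as $\delta \to 0$. One also cannot simply pick the starting vertices randomly, because the typical second Wilson branch from a nearby vertex will merge with $\hat Y$ well inside $B(3L)$ rather than produce a disjoint crossing. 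This is exactly where the paper brings in Benjamini's estimate (Result~\ref{lem_itai2}): after conditioning on the first branch $\gamma$ and on the UST inside $B(3L)$, the candidate ``exit points'' from which a second disjoint arm can be launched are in natural correspondence with the connected components of the UST restricted to $B(3L)\setminus B(L)$ that touch both circles, and the expected number of these components is bounded by a universal constant. Wilson's algorithm then gives, for each such component, the $\Es$-type bound you describe, and the first–moment bound $\E(K)\cdot C(L/N)^{3/4}$ closes the argument. Your proposal has nothing playing this role, so it does not yield a bound uniform in $\delta$.

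A secondary point: you assert that uniformity in $\Omega$ and in the boundary conditions is ``automatic'' because~\eqref{eq:es} is a statement about random walks in $B(N)$. That is not quite right: Wilson's algorithm produces LERW in the \emph{whole} graph $\Omega$ with whatever boundary identifications are imposed, not in $B(N)$, so the first branch and the conditioning depend on the ambient graph. The paper handles this cleanly by first showing (via the decomposition of $\mathcal{A}(L,N)$ into ``two primal arms joined inside $B(L)$'' or ``two dual arms joined inside $B(L)$'', plus duality and stochastic domination between USTs in nested graphs) that the worst case is $\Omega = B(N)$ with free boundary conditions, and only then running Wilson's algorithm. You should spell out an analogous monotonicity reduction rather than invoking uniformity as automatic.
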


\begin{proof}[Proof of Lemma \ref{ALN}]
It is clearly sufficient to focus on the case where $L < N /10$ say (choosing also $C \ge 10$ at the end will then ensure that the statement holds for all $L \le N$). 
Note that (for whatever $\Omega$ and boundary conditions), when ${\mathcal A}(L,N)$ occurs, then at least one of the following two events occur:
\begin {itemize}
\item There exists a branch of the UST that crosses the annulus twice: a part of this branch starts from $\partial B(N)$, reaches $\partial B(L)$ and then hits 
$\partial B(N)$ again.
\item There exists a branch of the dual UST that crosses the annulus twice: a part of this branch starts from $\partial B(N)$, reaches $\partial B(L)$ and then hits 
$\partial B(N)$ again.
\end {itemize}
By duality, it is enough to evaluate the probability of the first event, which we call ${\mathcal A}'(L,N)$. We then can note that the monotone coupling of USTs with different boundary conditions and in different domains shows that the probability of ${\mathcal A}'(L,N)$ is maximal (among all domains and boundary conditions, but for fixed $L$ and $N$) for the 
ball $B(N)$ with free boundary conditions on $\partial B(N)$. 
By then considering its dual configuration again, this means that it is sufficient to bound the probability that, for a UST in $B(N)$ with wired boundary conditions, there exists two disjoint branches of the tree that join  $\partial B(L)$ to the outer wired boundary $\partial B(N)$. 

From now on in this proof, we will work with the UST in $B(N)$ with wired boundary conditions. Since the branch $\g$ of this UST from the origin to $\partial B(N)$ always joins $\partial B(L)$ to $\partial B(N)$, we want to show that the probability that there exists at least another branch (disjoint from $\g$) that joins $\partial B(L)$ to $\partial B(N)$ is bounded by a constant times $(L/N)^{3/4}$.  
To see this, we consider the UST, conditionally on $\g$ and we start constructing the rest of the tree by a variant of Wilson's algorithm that we now describe.

Let $x$ denote the first (when starting from the origin) point on $\g$ that is at distance greater than $2L$ from the origin. 
The first step of our iteration goes as follows. We will use a random walk $X^1=X$ starting from this point $x$. 
More specifically, we start Wilson's algorithm at the the first point $X(j_1)$ at which $X$ is not in $\g$, and we use the movements of $X$ to perform it. 
In this way, one attaches a branch $I_1$ from $X(j_1)$ to $X(j_1') \in  \g \cup \partial B(N)$ (with obvious notations: here $j_1'$ is the first time after $j_1$ at which 
$X(j_1') \in  \g \cup \partial B(N)$) that will be part of the UST. Then, one continues still using $X$: the next point where one will start Wilson's algorithm will correspond to the first time $j_2$ larger than $j_1'$ that is not on $\g \cup \partial B(N) \cup I_1$. 
One continues like this constructing branches of the uniform spanning tree using this random walk 
$X$, until the first time greater than the exit time of $B(3L)$ by $X$ at which $X$ hits $\gamma \cup \partial B(N)$. 
At that moment, one has constructed some subtree of the wired UST that consists of the union of $\gamma$ with a finite number of branches $I_1, \ldots , I_k$.  

The key observation can be loosely speaking described as follows: during this  first step, one can create at most one second branch of the UST that crosses ${\mathcal A}(L,N)$ (and the probability of this event will be bounded by some constant times $\Es (L, N)$), and on the other hand, with 
some positive probability, one has drawn a collection of branches that do actually prevent the existence of a second branch of the UST that crosses ${\mathcal A}(L, N)$. Let us be more specific:  
let $E_1$ be the event that:  

\begin {itemize}
 \item 
The walk $X$ first winds once around the origin in the annulus $B(3L) \setminus B(L)$ -- in other words the first time $\rho_1$ at which the argument of 
$X$ around the origin exits $[-2 \pi + \arg (x), 2 \pi + \arg (x)]$ is smaller than the exit time $\rho_2$ of $B(3L) \setminus B(L)$ by $X$. 
\item And then, after $\rho_1$ but before exiting the ball of radius $L$ around $x$, the walk $X$ makes a closed loop around $x$ within $B(x, L/2)$ (i.e., it contains a path that disconnects $x$ from $\partial B(x, L/2)$). 
\end {itemize}

One can note that when this event $E_1$ occurs then necessarily, 
there cannot exist a second branch of the wired UST (disjoint from $\gamma$) that connects $\partial B(L)$ to $\partial B(N)$. Indeed, after $\rho_1$, $X$ will 
touch the branch $\gamma$ in $B(x,L/2)$ at at least one point $y$ such that the part of $\gamma$ joining $x$ and $y$ stays in $B(x,L/2)$: let us call $\tau$ such a time. By our definition of $E_1$, we see that the 
winding of $X$ around the origin between $0$ and $\tau$ will differ from that of the part of $\gamma$ that joins $x$ to $y$. We now call $\tau'$ to be the first time at which $X$ is at some point $y'$ on $\gamma$ so that 
the winding of $X$ until time $\tau'$ differs from that of the part of $\gamma$ that joins $x$ to $y'$. Note that on the event $E_1$,  $\tau' \le \tau \le \rho_2$, and 
that the time $\tau'$ corresponds to some moment in our algorithm, where one has constructed branches 
$I_1, \ldots, I_{k'}$ for some $k' \le k$. 
It is then easy to see that the set of vertices in $\gamma \cup I_1 \cup  \ldots \cup I_{k'}$ disconnects $B(L)$ from $\partial B(N)$, which prevents the existence of any UST branch disjoint from $\gamma$
that connects $\partial B(L)$ to $\partial B(N)$. 

Now, note that the probability of $E_1$ is bounded from below by a universal constant $b$, independent of $L$, $N$ or $\g$.
Indeed, the probability of $E_1$ converges to that of the corresponding event for a Brownian motion in an annulus as $L \to \infty$.

If $E_1$ does not hold, then we continue our algorithm until the first time after $X$ exits $B(3L)$ at which $X$ hits $\g$ or $\partial B(N)$. 
The probability of hitting $\partial B(N)$ is bounded from above by a constant times the conditional probability (given $\g$) that a random walk started from $0$ does not hit 
the subpath $\gamma^{5L}$ of $\gamma$ -- let us call this probability $p(\g)$: this is because the exit measures on $\partial B(5L)$ of random walks started at points inside of $B(3L)$ are all absolutely continuous with respect to another, with Radon-Nikodym derivatives uniformly bounded (with respect to $L$ and to the starting points of the walks). Then, we simply iterate the same procedure, starting an independent random walk $X^2$ from $x$ again, except that we already have added some branches to the UST, so that the way we add branches to the tree using Wilson's algorithm is slightly modified. We can however use the event $E_2$ defined for the random walk $X^2$ in the same way as $E_1$ was defined for 
$X^1$. We then iterate the procedure.

Then, we obtain the following iterative scheme. We first discover $\g$ and then: 
\begin {itemize}
\item With a probability at least $b$, the event $E_1$ occurs, and we then know that there is no second branch of the UST joining $\partial B(L)$ and $\partial B(N)$.
\item If not, then, with a conditional probability bounded from above by $p(\g)$, one discovers a second branch of the UST joining $\partial B(L)$ and $\partial B(N)$. 
\item Then, with a conditional probability at least $b$ again, the event $E_2$ occurs, and we know that there is no further branch of the UST joining $\partial B(L)$ and $\partial B(N)$.
\item If $E_2$ does not occur, then with a conditional probability bounded by $p (\g)$, one discovers an extra branch of the UST joining $\partial B(L)$ and $\partial B(N)$.
\item And so on.
\end {itemize}
Hence, we see that conditionally on $\g$, we can bound the expectation of the number $\mathcal{N}$ of additional disjoint branches (apart from $\g$) in the wired UST that join $\partial B(L)$ to $\partial B(N)$:
\[ \E(\mathcal{N}|\gamma)  \leq C\ \sum_{ k \ge 1 } (1-b)^k p(\g) =  C\ \frac {1-b}{b} \times p(\g).\]
Since 
\[\E ( p (\g) ) = \Es (5L,N),\]
we conclude that 
\[ 
 \ \P(\mathcal{A}'(L,N)) \le  \ \E(\mathcal{N})  \le \frac {1-b}{b}  \times C' (L/N)^{3/4}.
\]
\end{proof}

\subsection {Arm-estimates imply Lemma \ref{prop_lengthdiscret} and Lemma \ref {finitelength}}

\begin{proof}[Proof of Lemma \ref{prop_lengthdiscret}]
We can bound the expected value of the renormalized length $L^\d (\eps)$ by $\d^{5/4}$ times the sum over all pairs of edges $e_0$ (in the dual lattice) and $e_1$ (in the original lattice) that are at distance at most $\eps$ of each other of the probability of the intersection $E(e_0,e_1)$ of the following events : 
\begin {itemize}
 \item The edge $e_0$ belongs to the dual cycle ${\mathcal C}^\d$ (that appears when closing the edge $e$).
 \item The edge $e_0$ is at distance greater than $\eta$ from $e$, and in the ball of radius $1/ \eta$ around the origin.
 \item If we erase the two edges $e$ and $e_1$ from the UST, the edge $e_0$ is no longer on the interface between the clusters that contain $z_0$ and $z_1$.
 \item The edge $e_1$ is cut out during the cutting procedure (note that this event occurs independently of the rest, with probability $\d^{5/4}$ times a constant that depends on $t$). 
\end {itemize}
\begin{figure}[!ht]
\centering
\includegraphics[width=9.5cm]{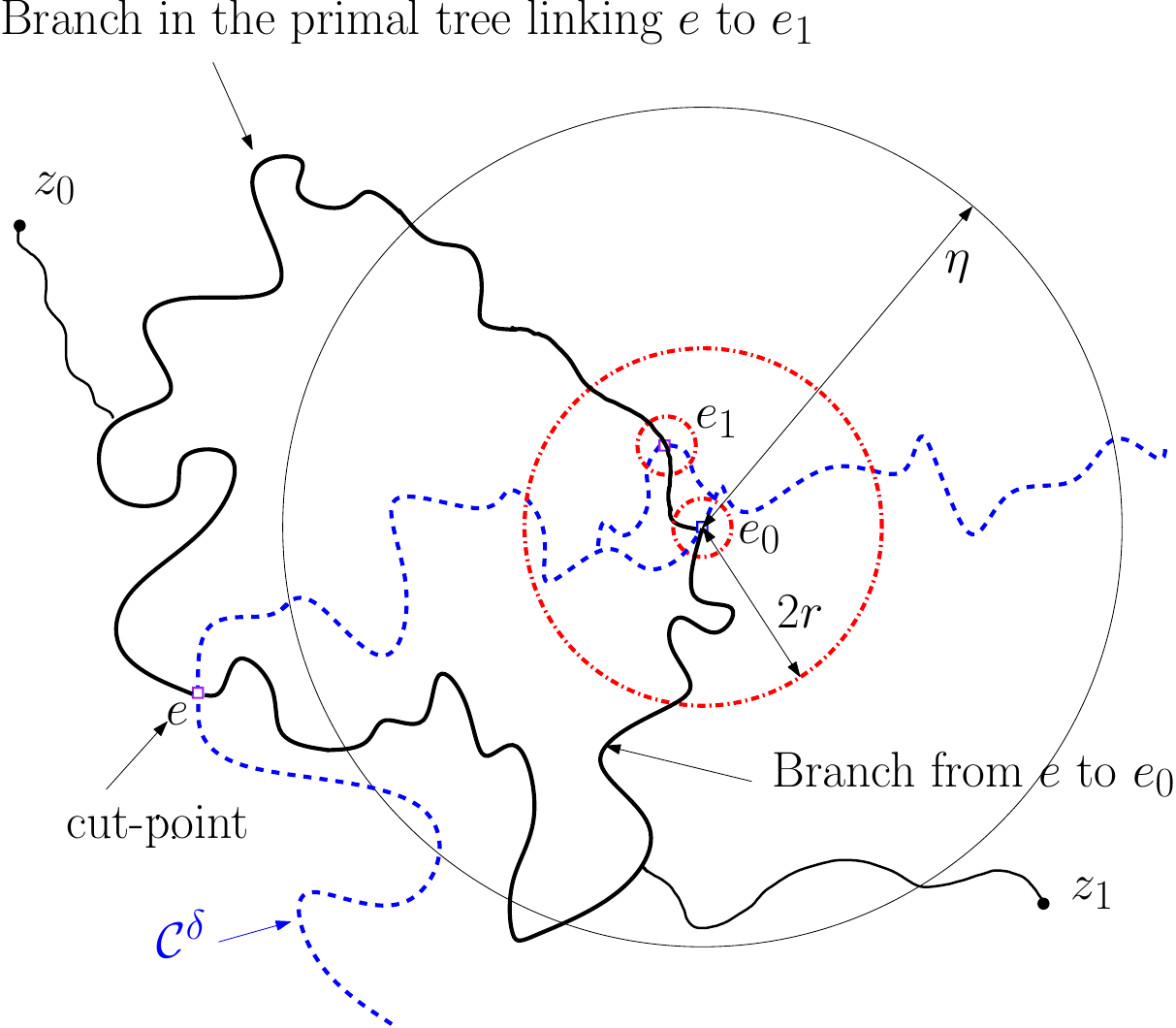}
\caption{Arm events appearing in $E(e_0,e_1)$}\label{lemme-epscut}
\end{figure}
For any  edge $f$ in $\d \Z^2$ and $l_1 \leq l_2$, denote by ${\mathcal A}_{f} (l_1,l_2)$ the four arms event in the annulus $B(x,l_2) \setminus B(x,l_1)$ centered at the middle point $x$ of the edge $f$ for the UST on $\d \Z^2$.
We have that, if we set $r := d(e_0,e_1) $, then (see Fig. \ref{lemme-epscut}):
\[
E ( e_0, e_1) \subset {\mathcal A}_{e_0} (\d/2,r/3) \cap {\mathcal A}_{e_1} (\d/2,r/3) \cap {\mathcal A}_{e_0} ( 2r, \eta ).
\]

Using the upper bounds on the probabilities of these events given by Lemma \ref{ALN}, together with the fact that the bounds on the first two are independent of the boundary conditions (so it is possible to first condition on the last one, and then to bound the conditional probability of the first two), we get readily that
\begin {align*}
  \E(L^\d (\eps)) &  \le \delta^{5/4} \times C(t) \delta^{5/4} \times \sum_{e_0 \in B(0, 1/\eta)}  \sum_{e_1 \in B(e_0, \eps)} (r/\delta)^{-3/4} \times (r / \delta )^{-3/4} \times (\eta / r)^{-3/4}
  \\
  & \le C(\eta) \delta^{5/4 + 5/4 + 3/4 + 3/4 - 2  } \times \sum_{x \in \delta \Z^2 \cap B(0, \eps) \setminus \{0\}} |x|^{-3/4}  
  \le C(\eta) \eps^{5/4}. 
  \end {align*}
\end{proof}

\begin {proof}[Proof of Lemma \ref{finitelength}]
The proof goes along similar lines than the previous one. We can bound the expected renormalized length by $\delta^{5/4}$ times the sum over all pairs of edges $e_0$ (in the original lattice) and $e_1$ (in the dual lattice) such that $r:= d(e_0, e_1) \le \eta$ of the probability of the intersection of the following events: 
\begin {itemize}
 \item The edge $e_0$ is on the UST branch from $z_0$ to $z_1$, and it splits this branch in two parts of diameter larger than $\eta_0$.
\item The edge $e_0$ is removed by the cutting procedure.
 \item The edge $e_1$ belongs to the dual cycle that appears when closing the edge $e_0$.
\end {itemize}
As in the previous argument, we can note that this event (for given $e_0$ and $e_1$) is included in the joint occurrence of the four-arms events ${\mathcal A}_{e_0} (\delta/2, r/3)$, $ {\mathcal A}_{e_1} (\delta/2,r/3)$ 
and ${\mathcal A}_{e_0} ( 2r, \eta_0)$, and we can conclude using the very same computation.
\end {proof}

\medbreak

\appendix

\section{Strong convergence of Uniform Spanning Trees}\label{sec:app}

In this appendix, we show that USTs with different boundary conditions (wired, free, whole-plane) converge when the mesh-size vanishes, in the sense that their finite subtrees parametrized by their 
appropriately renormalized length converge in law. 

This is done by combining three ingredients: 
the result of Lawler and Viklund about convergence of radial LERW to SLE$_2$ in its natural parametrization (Result \ref{res:convLERW}), the convergence of 
discrete USTs to their continuous counterparts (up to time-reparametrization) by \cite {LSW_LERW}, and
absolute continuity arguments between USTs with different boundary conditions using loop-soups. Discrete harmonic measure estimates will be instrumental as well. 

In order to avoid lengthy but easy details, we do outline here the main ideas, leaving the simple considerations to the interested reader.

\subsection{Notations and background}
$\Omega$ will denote a bounded simply connected domain with analytic boundary.
For each $z \in \Omega$, we let $z^\d$  be a point in $\d \Z^2$ that is at distance less than $\d$ from $z$ chosen in some deterministic way. 
We choose some fixed $\xi \in \Omega$ and we define $\Omega^\d$ to be the connected component of the graph $\delta \Z^2 \cap \Omega$ that contains $\xi^\d$ (where edges are kept in this graph if they lie entirely in $\Omega$). 
A half-edge adjacent to a vertex in $\Omega^\d$ but that does not belong to an edge in $\Omega^\d$ will be called a boundary half-edge. 

On this discretization, we will define the usual two measures on random walk paths and random walk loops. 
Each of these measures will come in two variants, corresponding the two following Markov chains: 
\begin {itemize}
 \item The usual random walk in $\d \Z^2$ killed upon exiting $\Omega^\d$ (i.e., at the first time it used an edge that is not in $\Omega^\d$). 
 \item The reflected random walk in $\d \Z^2$ that at each step, is choosing a neighbor with probability $1/4$ and jumps to it if the corresponding edge is in $\Omega^\d$. If the corresponding edge is not in $\Omega^\d$, it stays put and we say that the walk bounced on the boundary half-edge it tried to explore at that time (two walks that stay put at $x$ but bounce on different half-edges will be considered to be different in what follows). 
\end {itemize}
The measures corresponding to the random walk killed upon exiting $\Omega^\d$ are the following.
We let  $\lambda_\delta$ denote the (oriented, unrooted) loop measure in $\Omega^\delta$: an unoriented 
unrooted loop $X$ has a mass $J^{-1} \times (1/4)^{|X|}$, where $J$ is the multiplicity of the loop 
(i.e., $J$ is the maximal integer $j$ such that the loop is the concatenation of $j$ times the same loop,  see for instance \cite{W2,W3}).

We define similarly the loop measure $\lambda_\d^r$ corresponding to the reflected random walks. It is worthwhile to notice that when one restricts $\lambda^r_\d$ to the set of loops that do 
not bounce on the boundary (i.e., that do not use any boundary half-edge), one gets exactly the measure $\lambda_\d$. 

We will also use  the (oriented) excursion measure on nearest neighbor paths in $\Omega^\d$ that we denote by $\nu_{\Omega^\d}$. This is the measure that assigns a mass $4^{-n}$ to the 
nearest-neighbor paths $X= (X_0, \ldots, X_n)$ in $(\d \Z)^2$ such that the edges $(X_0, X_1)$ and $(X_{n-1}, X_n)$ are not in $\Omega^\d$, and the other $n-2$ edges are in $\Omega^\d$. 

\medbreak

Let us recall that one way to sample a UST in $\Omega^\d$ with wired boundary conditions using Wilson's algorithm (for convenience, we will view the boundary as a single vertex $\partial$) is to iteratively sample the subtrees of the UST $ \T_{\partial, z_1^\delta, \ldots, z_n^\delta}$ that join the points $z_1^\delta, \ldots, z_n^\delta$ and the boundary. The tree 
 $ \T_{\partial, z_1^\delta, \ldots, z_{n+1}^\delta}$ is obtained by adding to  $ \T_{\partial, z_1^\delta, \ldots, z_n^\delta}$ an independent LERW joining $z_{n+1}^\delta$ to  $ \T_{\partial, z_1^\delta, \ldots, z_n^\delta}$.
 The probability that $\T_{\partial, z_1^\delta, \ldots, z_n^\delta}$ is a given tree $T$ (see \cite[Chapter 2, Section 3]{W2} -- this is related to the fact that the set of loops that are being erased in Wilson's algorithm can be 
 interpreted exactly as the set of loops in a loop-soup that do intersect the UST branches that one constructs) is equal to some renormalization constant (i.e., independent of $T$) times $U_\d (T) \times 4^{-|T|}$, where $|T|$ is the number of edges in $T$, and 
\[
U_\d (T) := \exp  \left( \lambda_\d \left( \{ \ell \ , \ z_1^\delta \not\in  \ell, \  \ell \cap T \not= \emptyset \}\right)\right)
\]
(here we use the fact that the set of loops that intersect $z_1^\delta$ does not depend on $T$, and so 
we can include the term $\exp \left(\lambda_\d \{ \ell \ , \ z_1^\delta \in \ell \}\right)$ in the renormalizing constant).

Similarly, when one samples a UST in $\Omega^\d$ with free boundary conditions, the probability that $\T_{z_1^\delta, \ldots, z_n^\delta}$ is a given tree $T$ is given by $V_\delta(T) \times 4^{-|T|}$, where
\[
V_\d (T) := \exp  \left( \lambda_\d^r \left( \{ \ell \ , \ z_1^\delta \not\in  \ell, \  \ell \cap T \not= \emptyset \}\right)\right).
\]
(here, the algorithm is rooted at $z_1^\delta$ instead of $\partial$, so we keep all reflected loops, and by construction, the loops that contain $z_1^\delta$ are not present anyway when performing Wilson's algorithm).

\subsection{Wired UST convergence}

Let $z_1, \ldots, z_n$ be $n$ points in $\Omega$ and for each sufficiently small $\delta$,  $z_1^{\d},\ldots, z_n^{\d}$ will denote the approximations of these points on $\d \Z^2$ (such that each $z_j^\delta$ is at distance at most $\delta$ from $z_j$). 

We will consider the uniform spanning tree with wired boundary conditions in $\Omega^\d$, the uniform spanning tree with free boundary conditions in $\Omega^\d$ and the uniform spanning tree in $\delta \Z^2$. 
We will denote by $\T_n^{f,\delta}$ and $\T_n^{w,\delta}$ the (smallest) finite subtrees of these spanning trees that contains $z_1^\delta, \ldots, z_n^\delta$.

Note that the tree $\T_n^{w,\delta}$ sometimes contains the boundary vertex $\partial$, so that it can also be viewed as a forest. We will
 denote the (possibly larger) tree that contains $z_1^\delta, \ldots, z_n^\delta$ and the boundary vertex $\partial$ by $\widehat \T_n^{w,\delta}$ (so the trees $\widehat \T_n^{w,\delta}$ and $\T_n^{wf,\delta}$ 
 are the trees that are constructed iteratively via Wilson's algorithm as described above). When $\partial \notin \T_n^{w,\delta}$, then $\widehat \T_n^{w,\delta}$ is 
the union of $\T_n^{w,\delta}$ with an additional branch that joins $\T_n^{w,\delta}$ to $\partial$. 

Our goal is to show that  these wired and free uniform spanning trees, with appropriately rescaled length-parametrization converges in distribution to their continuous SLE$_2$-tree counterparts in $\Omega$
with their natural parametrizations. Let us start with the wired boundary conditions: 

\begin{prop}[Wired UST convergence]\label{cor:lengthwiredUST}
The tree $\widehat \T_n^{w,\delta}$ parametrized by its Euclidean length (say from $\partial$) multiplied by a constant times $\d^{1/4}$ converges in law 
to its continuous counterpart $\widehat \T_n^{w}$ parametrized via its natural parametrization.
\end{prop}

Note that we need to multiply here by $\delta^{1/4}$ here instead of the usual $\delta^{5/4}$ because we consider the Euclidean distance on the tree instead of the graph distance. 

\begin{proof}
Let us define for each $j \le n$, the branch $\gamma_{j,\d}$ of the tree that joins $z_j^\d$ to the boundary. 
The joint law of $(\gamma_{1, \d}, \ldots, \gamma_{n, \d})$ converges in law to its continuous counterpart, for the weaker topology $\tau_w$  on simple paths up to time-reparametrization
(this follows the results in \cite {LSW_LERW} for the scaling limit of one single branch, noting that this result does not require an analytical boundary, so that one can apply it iteratively via Wilson's algorithm). 
Note also that the joint law of $(\gamma_{1, \d}, \ldots, \gamma_{n, \d})$  and of the meeting points of different $\gamma_{j, \d}$ converges as well to its continuous counterpart. 

Let us choose a sequence $\delta_k \to 0$, and couple all the trees $\widehat \T_n^{w,\delta_k}$ and $\widehat \T_n^w$ on a same probability space (via Skorokhod's representation theorem) so that for the topology $\tau_w$, 
$(\gamma_{1, \delta_k}, \ldots, \gamma_{n,\delta_k})$ converges almost surely to $(\gamma_1, \ldots , \gamma_n)$ (and that the meeting points between different branches converge as well). 

From Result \ref{res:convLERW}, for each $j$, $\gamma_{j, \delta_k}$ with appropriately rescaled length converges in law to $\gamma_j$ with its natural parametrization. 
It follows readily that in the previous coupling, $\gamma_{j, \delta_k}$ has to converge in probability to $\gamma_j$ for this stronger topology. 
This finally implies that the collection $(\gamma_{1, \delta_k}, \ldots, \gamma_{n,\delta_k})$ converges in probability to $(\gamma_1, \ldots , \gamma_n)$ (for this stronger topology), which implies the claim. 
\end{proof}

The previous proposition readily implies that: 
\begin {cor}
The tree $\T_n^{w,\delta}$ parametrized by its Euclidean length (say from $z_1^\d$) multiplied by a constant times $\d^{1/4}$ converges in law 
to its continuous counterpart $\T_n^w$ with its natural parametrization. 
\end {cor}
Indeed, using the same arguments as before, when $\T_n^{w,\delta} \not= \widehat \T_n^{w,\delta}$, the arm $\widehat \T_n^{w,\delta} \setminus \T_n^{w,\delta}$ is easily seen to converge to its continuous counterpart.

Before discussing the UST with free boundary conditions, let us first derive the convergence result for the whole-plane UST.
Here $\T_n^{\infty,\delta}$ denotes the smallest subtree of an UST in $\delta \Z^2$ that contains the points $z_1^\d, \ldots, z_n^\d$  on $\d \Z^2$ that are approximations of some points $z_1 \ldots, z_n \in \R^2$. 

\begin{prop}[Whole-plane UST]\label{prop:lengthplanarUST}
The tree $\T_n^{\infty,\delta}$ parametrized by its Euclidean length (say from $z_1^\d$) multiplied by a constant times $\d^{1/4}$ converges in law 
to its continuous counterpart $\T_{n}^\infty$ with its natural parametrization.
\end{prop}

\begin{proof}
This follows fairly directly from the previous result for the wired UST. Indeed, it is easy to see that 
for all $\eps>0$, one can find $R$ large enough so that for all $\delta$, the tree $\T_n^{\infty,\delta}$ can be coupled with the subtree $\T_n^{w,\delta}$ of the wired UST in the 
discrete approximation of the disk of radius $R$ around the origin, in such a way that they coincide with probability at least $1-\eps$ (this can be shown in the number of ways, for instance in the spirit of the proof 
of Lemma 3.1 in \cite {S0} using Wilson's algorithm and Beurling-type estimates such as Lemma 2.3 in \cite {L2}. One can first condition on the infinite
branch on the full-plane UST in $\delta \Z^2$ that starts near the point ${(R, 0)}$, and first note that with probability $1 - O(1)$  as $R \to \infty$ and uniformly with respect to $\delta$, 
this branch does not go through the ball of radius $R^{1/2}$ around the origin.
Then, using the same random walks in Wilson's algorithm for both, one can couple 
the two finite subtrees for the UST with wired conditions on this branch, with the UST with wired boundary condition in the discrete approximation of the ball of radius $R$, in such a way that 
the parts of the subtrees inside the balls of radius $R^{1/4}$ do coincide with probability $1 - O(1)$, again uniformly with respect to $\delta$. We safely leave those details to the reader). 
\end{proof}

\subsection{Free UST convergence} 
\label {A3}
We now turn to the more challenging case of the free UST.

\begin{prop}[Free UST]\label{prop:lengthfreeUST}
The tree $\T_n^{f,\delta}$ parametrized by its length (say from $z_1^\d$) renormalized by a constant times $\d^{5/4}$ converges in law 
to its continuous counterpart $\T_{n}^f$ with its natural parametrization.
\end{prop}

One important observation in order to deduce the results for the free UST from those for the wired UST is that in a free UST (and in its scaling limit)
the entire branch that joins any two given inner points (at positive distance from the boundary) will (in the scaling limit) remain at positive distance from $\partial \Omega$.
More precisely: 
\begin{lem}\cite[Theorem 11.1-(ii)]{S0}
\label {lemmadist}
For any given $z_1, \ldots, z_n$ in $\Omega$, for each $\eps$, one can find $r_0$ and $\delta_0$ so that for all $\d < \d_0$, 
\[
\P [ d( \T_n^{f,\delta}, \partial \Omega^\d) < r_0 ] \le \eps.
\]
\end {lem}

We now turn to the proof of Proposition \ref {prop:lengthfreeUST}. 
\begin{proof}[Proof of Proposition  \ref {prop:lengthfreeUST}]

We will prove this result by controlling the Radon-Nikodym derivative of the law of $\T_n^f$ with respect to that of the wired subtree $\T_n^w$. 

As noted above, for a given possible tree $T$:
\[
\P [\T_n^{f,\delta} = T]= \frac{1}{Z_n^{f,\delta}} 4^{- |T|} \times V_\d (T). 
\]
For the wired UST, we can compute the probability that $\T_n^{w,\delta} = T$ for the same tree $T$ by summing, over all 
possible additional simple branches $\gamma$ that connect $T$ to the boundary $\partial$, the probability that $\widehat \T_n^{w,\delta}$is equal to $T \cup \gamma$: 
\[
\P [\T_n^{w,\delta}  = T ] = 
\frac{1}{Z^{w,\delta}_n}\sum_{\gamma: T \leftrightarrow \partial \Omega^\delta} 
4^{-|\gamma|- |T|} U_\delta (T \cup \gamma).
\]
However, for each tree $T$, 
\[
\nu_{ \Omega^\delta \setminus T } \left( \{ e \ : \ e \hbox { joins } \partial \Omega^\d \hbox { and } T \} \right)
=
\sum_{\gamma: T \leftrightarrow \partial \Omega^\delta}  4^{-|\gamma|} \exp \left(\lambda_{\Omega^\delta \setminus T} \left( \{ \ell \ : \ \ell \cap \gamma \not= \emptyset \} \right)\right)
\]
(this corresponds to the decomposition of each excursion $e$ into its loop-erasure and the loops it encountered). 
Hence, as $U_\delta (T \cup \gamma) = U_\delta (T)  \exp \left(\lambda_{\Omega^\delta \setminus T} \left( \{ \ell \ : \ \ell \cap \gamma \not= \emptyset \} \right)\right)$, we have:
\[
\P [\T_n^{w,\delta} = T ]
=\frac{1}{Z^{w,\delta}_n} 4^{-|T|} U_\delta (T) \nu_{ \Omega^\delta \setminus T } \left( \{ e \ : \ e \hbox { joins } \partial \Omega^\d \hbox { and } T \} \right). 
\]

Our goal is to control the behavior of the ratio $ \P [\T_n^{f,\delta} = T ] / \P [\T_n^{w,\delta} = T ]$ as $\delta \to 0$, uniformly over all trees $T$ that stay at distance at least $r_0$ from the boundary of $\Omega$. 
The previous expressions show that this ratio is equal to  some constant $C=C(\delta, \Omega)$ (that is independent of $T$) times
\begin{equation}\label{eq:ratio}
\frac{V_\delta (T)}{U_\delta (T)}
=
\frac {   \exp\left(\lambda_{ \delta}^{r}\left(\{\ell \ :  \ z_1^\delta \notin \ell,  \ell \cap T \neq \emptyset, \ell \hbox { bounces off } \partial \Omega^\d \}\right)\right) } {
\nu_{ \Omega^\delta \setminus T } \left( \{ e \ : \ e \hbox { joins } \partial \Omega^\d \hbox { and } T \} \right) }
\end{equation}
because the mass of loops that do not bounce off the boundary appear in both expressions (for the free and wired trees) and cancel out. 

Let us first study the excursion measure term in the denominator of the right-hand side of (\ref{eq:ratio}). This is a well-known quantity that can be viewed as the discrete extremal distance between $\partial \Omega^\d$ and $T$. 
This quantity is uniformly close to its continuous Brownian counterpart (a.k.a. the extremal distance between $T$ and $\partial \Omega$) when $\d \to 0$. 
We will briefly explain in Section \ref {Sfinal} how to adapt the existing results 
for discrete extremal lengths of quadrilaterals to the present annular case. Let us also note that this quantity is bounded form below and from above 
uniformly with respect to all trees $T$ that are are distance greater than $r_0$ from $\partial \Omega$, as well as with respect to all $\delta$ small enough. Indeed, once $z_1$ and $z_2$ are fixed (say), then all excursions that go through some given tube from the boundary to itself and disconnects $z_1$ and $z_2$ in the domain, do necessarily intersect each of these trees, and the mass of this set of excursions converges as 
$\d \to 0$, which proves the lower bound. For the upper bound, one can use monotonicity of the extremal distance and bound this quantity by the 
extremal distance between the outer boundary of $\Omega$ and some cycle that is at distance smaller than $r_0$ from the boundary. 
 
We now turn to the study of the numerator of the right-hand side of  (\ref{eq:ratio}). Let us now look closer at the set of loops 
\begin{equation}\label{eq:set}
\{\ell \ :  \ z_1^\delta \notin \ell,  \ell \cap T \neq \emptyset, \ell \hbox { bounces off } \partial \Omega^\d \}
\end{equation}
appearing in (\ref{eq:ratio}).  
Let $w$ and $w'$ be two fixed disjoint concentric ($w'$ surrounds $w$) smooth curves that surround both $z_1$ and $z_2$ that both stay in the $r_0$-neighborhood of $\partial \Omega$, and denote 
their natural discretization on $\delta \Z^2$ by $w_\delta$ and $w_\delta'$. 
One can then decompose each loop $\ell$ in $\Omega^\delta$ that touches both 
$\partial \Omega^\delta$ and $T$ 
(where the tree $T$ stays at distance at least $r_0$ from $\partial \Omega$) into its upcrossings and downcrossings between $w_\delta$ and $w_\delta'$. More precisely, an upcrossing will correspond 
to a nearest-neighbor path from some point $y$ on $w_\delta$ to the first point $y'$ that belongs to $w_\delta'$, and a downcrossing will conversely 
correspond to a path (possibly reflected off the boundary of $\Omega^\delta$) that starts from some point $y'$ on $w_\delta'$ up to the first point $y$ that belongs to $w_\delta$. 
The maximal number $N(\ell)$ of disjoint subpaths of a loop $\ell$ that are upcrossings is finite, and is at least equal to $1$ (because the loop touches both $T$ and the boundary). Summing up, 
we get a decomposition of the loop into a concatenation of $N(\ell)$ pairs of paths, each pair consisting of an upcrossing from $w_\delta$ to $w_\delta'$ and one downcrossing from $w_\delta'$ to $w_\delta$. 
The upcrossings are usual random walk paths until their first hitting of $w_\delta'$ and the downcrossings are reflected random walk paths in $\Omega^\delta$ up to their first hitting of $w_\delta$. 

Let $r_1$ be the minimum of the distance $d(z_1, w)$ and $d( z_1, z_2)/ 2$. 
We say that an upcrossing from $w_\delta$ to $w_{\delta}'$ is good if it does not disconnect $z_1^\delta$ from the circle of radius $r_1$ around $z_1$, and if it does not go through the point $z_1^\delta$.
We say that a loop $\ell$ of (\ref{eq:set}) is good if all of its upcrossings are good. It is easy to see that the total mass of such good loops is finite, 
as the mass of good loops with exactly $n$ upcrossings decays exponentially in $n$. 

The loops that are not good intersect any tree that contains both $z_1^\delta$ and $z_2^\d$ (as $r_1$ is smaller than $d(z_1, z_2)/2$), so that the mass of the loops in (\ref{eq:set}) that are not good does not depend on $T$, and so can be incorporated into the constant $C(\delta,\Omega)$. 
Hence, it it enough to estimate the mass under $\lambda_\d^r$ of the set of loops
\begin{equation}\label{eq:setloops}
\{\ell \ :  \ \ell \cap T \neq \emptyset,\ \ell \hbox { is good, and bounces off } \partial \Omega^\d \}.
\end{equation}
This set can be decomposed according to the number $N(\ell)$ of upcrossings of a loop $\ell$, and according to the positions 
of the end-points $y_1, \ldots, y_{N(\ell)}$ and $y_1', \ldots, y_{N(\ell)}'$ of these up and down-crossings. More precisely, we can choose to root $\ell$ at the beginning of one of its upcrossings, chosen uniformly at random (so we 
weight this choice of a root by a factor $1/ N(\ell)$). The first upcrossing goes from $y_1$ to $y_1'$, the first downcrossing from $y_1'$ to $y_2$, and so on (the last downcrossing goes from $y_N'$ back to $y_1$).  
Recall that we also define $J(\ell)$ to be the maximal multiplicity of the loop (which will typically be equal to $1$). 
The total mass for $\lambda_\d^r$ of the set (\ref{eq:setloops}) can be written as: 
\begin{equation}\label{eq:sum}
o_{\delta}(1) + 
\sum_{N \ge 1} \sum_{y_1, \ldots,y_N} \sum_{y_1', \ldots , y_N' } N^{-1} R_\d ( y_1', y_2; y_2', y_3; \ldots, y_N', y_1) \times Q_\d^g ( y_1, y_1'; y_2, y_2'; \ldots ; y_N, y_N') 
\end{equation}
where 
\[
R_\d ( y_1', y_2; y_2', y_3; \ldots, y_N', y_1) := p_\d^r (y_1', y_2) \ldots p_\d^r (y_N', y_1 ),
\]
and $p_\d^r (y', y)$ denotes the probability that a reflected random walk in $\Omega^\d$ started from $y'$ hits $w_\d$ at $y$, and 
where $Q_\d^g$ denote the probability that $N$ independent random walks started from $y_i$, $i =1, \ldots, N$ respectively, hit $w_\d'$ at $y_i'$ respectively, 
that they are all good upcrossings, and that at least one of them intersects the tree $T$. 

The $o_{\delta}(1)$ term in (\ref{eq:sum}) is due to the fact that we overcounted here the set of loops that have a multiplicity $J(\ell)$ that is not equal to $1$. 
It is easy to see that this term is negligible as $\delta\to 0$ as the contribution of loops of non-trivial multiplicity vanishes (for instance, note that $J \ge 2$ implies that for some $j \not= k$, $y_j = y_{k}$). 

To conclude, we need to control the limit of (\ref{eq:sum}), i.e., we need to control, for each given $N$, the limit of the sum of the products $R_\d \times Q_\d^g$.
Note that $R_\d$ involves reflected random walks but deals only with discrete harmonic measures, 
while $Q_\d^g$ involves usual (non-reflected) random walks, but requires control on the trajectories, because of the condition that the upcrossings are good and that at least one of them hits $T$.

Let us first look at the quantity $R_\d$. For each point $y_i' (\delta) \in \d \Z^2$, we can view $p_\d^r (y_i', \cdot )$ as the distribution of the first point at which the reflected random walk in $\Omega^\d$ started from $y_i'(\d)$ hits $w_\d$. 
On the one hand,  the dependence 
on the starting point $y_i'(\d)$ can be controlled uniformly via a simple coupling argument
(two random walks started from nearby points can be mirror-coupled in such a way that they meet with 
high probability before exiting some ball). 
On the other hand, when $\d \to 0$, if $y_i' (\d) \to y_i$, then this discrete  harmonic measure 
can be shown to converge to its continuous counterpart, as we briefly explain in Subsection \ref {Sfinal}.

Let us now focus on $Q_\d^g$. 
Just as for $R_\d$
(but with the roles of $y_i'$ and $y_i$ exchanged), for each set of points $(y_1, \ldots, y_N)$ in $\d \Z^2$, one can view $Q_\d^g$ as a random measure $\mu_{\d, T}$ on the set of points $(y_1', \ldots, y_N')$. 

Let us define for each $y(\d) \in \d \Z^2$ the measure $\mu_\d (y (\d), \cdot)$ on endpoints of good upcrossings that start from $y (\d)$ and the measure  $\tilde \mu_\d (y (\d), \cdot)$ on endpoints of good upcrossings that start from $y(\d)$ and do not intersect $T$. 

When $\d \to 0$, when the starting point $y(\d)$ converge to some $y$, and when the tree $T=T(\delta)$ converges to some continuous tree
(noting that all points on this continuous tree are regular for Brownian motion simply because it is connected by arcs, so that as soon as 
Brownian motion hits the tree for the first time at some point $z$, it also  disconnects this point $z$ from infinity immediately after that time), 
the usual convergence in distribution of simple random walk to Brownian motion implies that the measures $\mu_d (y(\d), \cdot)$ 
and $\tilde \mu_\d (y (\d), \cdot)$ converge to their Brownian counterpart (the previous observation ensures that the scenario where the limiting Brownian motion hits the tree much before the random walk does 
is unlikely). 
Since the quantity $\mu_{\d,T}$ is a linear combination of finite products of these measures, it follows readily that it  also converges to its Brownian counterpart when $(y_1 (\delta), \ldots, y_n(\delta))$ 
converges to some $(y_1, \ldots , y_n)$ (and the tree $T(\delta)$ converges to some continuous tree). 

Furthermore (for instance by simple coupling arguments), one can 
again see that these measures depend continuously on 
the starting points (and also uniformly with respect to $\d$ -- the total variation distance between the two measures for two sets of discrete starting points goes to $0$ when the distance 
between these starting points goes to $0$).  

From these convergence of the measures $R_\d$ and $\mu_{\d, T}$ (uniform in the starting point parameters, and in $T$ when $d(T, \partial \Omega)$ is bounded from below), the convergence of the sum (\ref{eq:sum}) finally follows 
(indeed, the contribution to (\ref{eq:sum}) of loops with more than $N$ upcrossings is exponentially small, as noted previously).

We can note also that the same arguments (just evaluating the mass of good loops without reference to $T$) shows that the 
masses of the set of loops in (\ref {eq:setloops}) are all bounded uniformly (with respect to all $T$'s that are at distance greater than $r_0$ of the boundary,  and all $\d$ small enough).
Hence, on the set of trees that stay at distance at least $r_0$ from the boundary, 
the Radon-Nikodym derivative between the laws of the free and wired subtrees converges uniformly. Combining this with Lemma \ref {lemmadist} and  the convergence in law of the finite-subtrees of the wired UST, we can conclude  follows from that of the finite subtrees of the free UST do also converge in law. 
\end{proof}

\subsection{Joint convergence of the UST and its dual}

Finally, we consider the joint convergence of a wired UST with its dual free UST. 
We let $\T^\delta$ denote  the whole wired UST in the discretization $\Omega^\delta$ of a bounded simply-connected domain $\Omega$ with analytic boundary. 
Its dual $\T^{\dagger\delta}$ is then a free UST in some subgraph of $\delta (\Z + (1/2))^2$, which also approximates $\Omega$.

Again, for each $z_1, \ldots, z_n$, we can define the finite subtree tree $\T_n^\delta$ of $\T^\delta$. We also consider 
the smallest finite subtree of the dual tree $\T^{\dagger,  \delta}$ that connects
some approximations of $z_1, \ldots, z_n$ in the dual graph $\delta (\Z + (1/2))^2$. We denote this tree by $\T_n^{\dagger,  \delta}$. 
\begin{cor}\label{prop:joint}
The pair $(\T_n^\delta , \T_n^{\dagger,  \delta})$ (with appropriately rescaled length) converges in distribution to its continuous counterpart (with the natural parametrizations). 
\end{cor}

\begin{proof}
The tree and dual tree are known to jointly converge (as unparametrized trees), and \cite[Remark 10.14]{S0} shows that their limits are deterministic functions of each other. So we can readily deduce that 
$(\T_n^\delta , \T_n^{\dagger,  \delta})$ converges to its continuous counterpart, in the sense of unparametrized trees (i.e., trees up to monotone reparametrization). 
In order to deduce the convergence of the parametrized trees, we can use the same argument as before: 
\begin {itemize} 
 \item We consider any given sequence $\d_k \to 0$ and we couple the pairs $(\T_n^{\delta_k} , \T_n^{\dagger,  \delta_k})$ for all $k$, so that this pair converges almost surely as $k \to \infty$ (as unparametrized trees). 
 \item Then, we note that when one looks at the sequence $\T_n^{\delta_k}$ alone, Proposition \ref{cor:lengthwiredUST} then implies that it converges in probability for the stronger topology of parametrized trees. 
 Similarly, Proposition \ref {prop:lengthfreeUST} shows that $\T_n^{\dagger,  \delta_k}$ converges in probability for the stronger topology. It follows readily that the pair converges in probability for the stronger topology 
to the pair of limiting trees parametrized by their natural parametrization (noting that the natural parametrization is a deterministic function of the tree). 
\end {itemize}
\end{proof}

Similarly, using Proposition \ref{prop:lengthplanarUST}, we obtain the following corollary for the subtrees of the whole-plane UST and its dual (with obvious notation): 
\begin{cor}\label{prop:joint2}
The pair $(\T_n^{\infty, \delta} , \T_n^{\infty \dagger,  \delta})$ (with appropriately rescaled length) converges in distribution to its continuous counterpart (with the natural parametrizations). 
\end{cor}

\subsection{Final estimates} 
\label {Sfinal} 
We now briefly explain how to adapt the existing proofs in the literature in order to obtain the two results that we have used in Section \ref {A3} -- the following arguments are in the spirit of 
\cite {Ch}: 

\begin {itemize} 
 \item Let us first discuss the mass $m_A$ of the set of discrete excursions that cross a discrete conformal annulus $A$ (we applied this to the term in the denominator of (\ref {eq:ratio})). 
 For each given $\delta$, let us consider the harmonic function $\widehat H^\delta$ in $A$ that takes value $0$ on the inner boundary and $1$ on the outer boundary. 
For any closed simple loop $L$ on the dual lattice of $\delta \Z^2$ that separates the two boundary components of the conformal annulus, we can define 
the flow $\phi (L)$ of the gradient vector field $\nabla \widehat H^\delta$ through $L$ (defined as the sum of the increments of $\widehat H^\delta$ over all 
properly oriented edges dual to those of $L$). We can note that 
(a) The flow $\phi (L)$ does in fact not depend on the choice of $L$ because $\widehat H^\delta$ is harmonic. 
(b) When one chooses $L$ to be ``outer boundary circuit'', one gets exactly the quantity $4m_A $, by definition of the excursion measure and the representation of $\widehat H^\delta$ via random walk hitting probabilities, the $4$ factor comes from the $1/4$ contribution in $m_A$ of the first outgoing jumps away from the outer boundary, across the edge of the boundary circuit). 
Hence, the quantity $m_A$ is in particular equal to $\phi (L_0^\delta)$ through some given $L_0^\delta$, which is the approximation on the lattice dual to $\d \Z^2$ of a given loop $L_0$ 
that is at positive distance of the outer boundary.  

To conclude, we can then note that the harmonic function $\widehat H^\delta$ and its (appropriately defined) discrete derivatives converge to their continuous counterpart $\widehat H$ as $\d \to 0$, and uniformly when considered 
  at some positive distance from the boundary of $A$ (and therefore uniformly on a neighborhood of $L_0$). Hence, we can conclude the quantity $4m_A$ converges to the flow of the gradient
  of $\widehat H$ through $L_0$, 
  which is a positive and finite quantity (note that no normalization is needed as $\phi (L_0)$ will typically be the sum of $O (\delta^{-1})$ terms of order $\delta$), 
  and that in turn can be interpreted in terms of the mass of a set of continuous Brownian excursions, using similar arguments.

  \item Let us now discuss the convergence of the harmonic measure $p_\d^r (y', \cdot)$ for a random walk reflected on the outer boundary of a conformal annulus $A$. 
 It suffices to show the convergence of the harmonic measure (seen from $y'$) of a given subarc $a$ of the inner boundary of $A$. 
 This is a discrete harmonic bounded function $H^\d$ of the starting point $y'$. It therefore has subsequential limits as $\d \to 0$. A limit $h$ of a convergent subsequence is necessarily harmonic, and by Beurling-type estimates like Lemma 2.3 in \cite {L2}, it has 
 boundary values $0$ and $1$ on the inner boundary of the annulus ($1$ of the arc $a$ and $0$ on its complement). To conclude, we need to show that $h$ has Neumann boundary conditions on the outer boundary of $A$. 
 
To do this, let us consider the discrete harmonic conjugate $H^{\d*}$ of $H^\d$, normalized so that it takes the value $0$ at some given interior point. These harmonic conjugates are bounded uniformly with respect to $\d$, so that they also have subsequential limits. Hence, by extracting a further subsequential limit, one gets a joint convergence of $(H^\d, H^{\d*})$ to some pair $(h, h^*)$.  As above, Beurling-type estimates imply that $h^*$ is constant on the outer boundary, which is to say that $h$ has Neumann boundary conditions on the outer boundary. 
 
\end {itemize}

\bigbreak

\begin {center}
 {\sc Acknowledgements.}
\end {center}

\noindent
We acknowledge support and/or hospitality of the following grants and institutions: the Einstein Foundation Berlin, TU Berlin, SNF-155922 and 175505, NCCR Swissmap and the FIM at ETH Z\"urich, the 
ESPRC under grant EP/103372X/1, and the Isaac Newton Institute. The authors thank Dima Chelkak for his advice concerning Section \ref {Sfinal}. 
L.D. also thanks G\'abor Pete for useful discussions. We are also extremely grateful for the many insightful comments of an anonymous referee.


\begin{thebibliography}{99}

\bibitem{ABNW}
Michael Aizenman, Almut Burchard, Charles Newman and David~B. Wilson
\newblock Scaling limits for minimal and random spanning trees in two dimensions.
\newblock {\em Random Structures Algorithms} 15(3-4):319--367, 1999.


\bibitem{AKM}
Tom Alberts, Michael~J. Kozdron, and Robert Masson.
\newblock Some partial results on the convergence of loop-erased random walk to
  {$\rm SLE(2)$} in the natural parametrization.
\newblock {\em J. Stat. Phys.}, 153(1):119--141, 2013.

\bibitem{Barlow}
Martin Barlow.
\newblock Loop erased walks and uniform spanning trees.
MSJ Memoirs, to appear. 

\bibitem {Beffara}
Vincent Beffara.
The Hausdorff dimensions of SLE curves, 
{\em Ann. Probab.} 36, 1421-1452, 2008.

\bibitem{BVL}
Christian Bene{\v s}, Fredrik~Johansson Viklund, and Gregory~F. Lawler.
\newblock Scaling limit of the loop-erased random walk Green's function.
\newblock {\em Probab. Th. Rel. Fields} 166, 271-319, 2014.

\bibitem{Ben}
Itai Benjamini.
\newblock Large scale degrees and the number of spanning clusters for the
  uniform spanning tree.
\newblock In {\em Perplexing problems in probability}, volume~44 of {\em Progr.
  Probab.}, 175--183, 1999.

\bibitem{BLPS}
Itai Benjamini, Russell Lyons, Yuval Peres, and Oded Schramm.
\newblock Uniform spanning forests.
\newblock {\em Ann. Probab.}, 29(1):1--65, 2001.

\bibitem{Ch}
Dmitry Chelkak. 
\newblock Robust discrete complex analysis: A toolbox.
\newblock {\em Ann. Probab.} 44(1):628--683. 2016.


\bibitem{DS}
Dmitry Chelkak and Stanislav Smirnov.
\newblock Discrete complex analysis on isoradial graphs.
\newblock {\em Adv. Math.}, 228(3):1590--1630, 2011.


\bibitem{GPS1}
Christophe Garban, G\'{a}bor Pete, and Oded Schramm.
\newblock Pivotal, cluster and interface measures for critical planar percolation,
\newblock {\em J. Amer. Math. Soc.} 26:939--1024, 2013.

\bibitem{GPS2}
Christophe Garban, G\'{a}bor Pete, and Oded Schramm.
\newblock The scaling limits of near-critical and dynamical percolation,
\newblock {\em J. Europ. Math. Soc.}, to appear.



\bibitem{GPS3}
Christophe Garban, G\'{a}bor Pete, and Oded Schramm.
\newblock 
The scaling limits of the Minimal Spanning Tree and Invasion Percolation in the plane,
\newblock 
{\em Ann. Probab.}, to appear. 



\bibitem{GG}
{Geoffrey Grimmett. 
The random cluster model,
Springer, 2006.
}

\bibitem{K}
Gady Kozma.
The scaling limit of loop-erased random walk in three dimensions, {\em Acta Math.} 191:29-152, 2007.

\bibitem {Kenyon}
Richard Kenyon.
The asymptotic determinant of the discrete Laplacian, 
{\em Acta Math.} 185:239-286, 2000.

\bibitem {L}
Gregory F. Lawler.
\newblock 
Intersections of random walks. 
Birkh\"auser, 1991.

\bibitem {L2} 
Gregory F. Lawler.
\newblock
A Discrete Analogue of a Theorem of Makarov.
{\em Combinatorics, Probability and Computing.} 
 2(2):181-199, 1993. 

 \bibitem{LL}
Gregory F. Lawler and Vlada Limic.
\newblock
Random walks. A modern introduction, CUP, 2010.

\bibitem{LR}
Gregory~F. Lawler and Mohammad~A. Rezaei.
\newblock Minkowski content and natural parameterization for the
  Schramm-Loewner evolution.
\newblock {\em Ann. Probab.},    
43:1082-1120, 2015.

\bibitem{LSW_LERW}
Gregory~F. Lawler, Oded Schramm, and Wendelin Werner.
\newblock Conformal invariance of planar loop-erased random walks and uniform
  spanning trees.
\newblock {\em Ann. Probab.}, 32(1B):939--995, 2004.

\bibitem{LT}
Gregory~F. Lawler and Jos\'e~A. Trujillo Ferreras.
\newblock Random Walk Loop Soups.
\newblock {\em Trans. Amer. Math. Soc.}, 359(2):767--787, 2007.

\bibitem {LV}
Gregory F. Lawler and Fredrik Viklund. 
\newblock Convergence of loop-erased random walk in the natural parametrization,
preprint 2016.


\bibitem {LV2}
Gregory F. Lawler and Fredrik Viklund. 
\newblock Convergence of radial loop-erased random walk in the natural parametrization,
preprint 2017.


\bibitem {LP}
Russel Lyons and Yuval Peres. 
Probability on Trees and Networks, CUP, 2016.


\bibitem{LW}
Gregory F. Lawler and Wenderlin Werner. 
\newblock The {B}rownian loop soup
\newblock {\em Probab. Theory Related Fields} 128(4), 565--588, 2004. 


\bibitem{Masson}
Robert Masson.
\newblock The growth exponent for planar loop-erased random walk.
\newblock {\em Electron. J. Probab.}, 14:no. 36, 1012--1073, 2009.

\bibitem{S0}
Oded Schramm.
\newblock Scaling limits of loop-erased random walks and uniform spanning
  trees.
\newblock {\em Israel J. Math.}, 118:221--288, 2000.


\bibitem {SchSm}
Oded Schramm and Stanislav Smirnov.
On the scaling limits of planar percolation,
{\em Ann. Probab.} 39, 1768-1814, 2011.

    
\bibitem {Sm}{Stanislav Smirnov.}
    Discrete complex analysis and probability, in {\em Proceedings of the International Congress of Mathematicians (ICM), Hyderabad, India}, 595-621, 2010. 
  

\bibitem {W} {Wendelin Werner.}
A simple renormalization flow for FK-percolation models,
\newblock 
in {Geometry, Analysis and Probability}, in honor of Jean-Michel Bismut, Progr. in Math.
Vol. 310, 263-277, Birk\"auser, 2017.


\bibitem {W3} {Wendelin Werner.}
 On the spatial Markov property of soups of unoriented and oriented loops,
 \newblock
{\em S\'em. de Probabilit\'es XLVIII,} L.N. in Math. 2168, Springer, 481-503, 2016.



\bibitem {W2} {Wendelin Werner and Ellen Powell.}
Lectures notes on the Gaussian Free Field,
\newblock  {\em Topics course at ETH Zurich, September-December 2018}. 



\bibitem{Wi}
David~B. Wilson.
\newblock Generating random spanning trees more quickly than the cover time.
\newblock In {\em Proceedings of the Twenty-eighth Annual ACM Symposium on the
  Theory of Computing (Philadelphia, PA, 1996)}, 296--303,
  1996.


\bibitem {Z}
    Jean Zinn-Justin, 
    Phase transitions and renormalization group, 
    OUP, 
    2013.

\end{thebibliography}
\end{document}